\theoremstyle{plain}
\newtheorem{theorem}{Theorem}[section]
\newtheorem{lemma}[theorem]{Lemma}
\newtheorem{proposition}[theorem]{Proposition}
\theoremstyle{definition}
\newtheorem{definition}[theorem]{Definition}
\newtheorem{remark}{Remark}
\newcommand{\RE}{\text{Ritt}_{\text{E}}}
\title{$H^\infty$ Functional Calculus for a Commuting pair of \( \RE \) Operators}
\keywords{Joint functional Calculus, \(\text{Ritt}_E\) operators, Sectorial operators, Loose dilation, Joint similarity problem}
\author[Suman Mondal]{Suman Mondal}
\address[Suman Mondal]{School of Mathematics, Indian Institute of Science Education and Research Thiruvananthapuram, Kerala - 695551}
\email{suman2024@iisertvm.ac.in}
\author[Subhajit Palai]{Subhajit Palai}
\address[Subhajit Palai]{School of Mathematics, Indian Institute of Science Education and Research Thiruvananthapuram, Kerala - 695551}
\email{subhajit22@iisertvm.ac.in}
\author[Samya Kumar Ray]{Samya Kumar Ray}
\address[Samya Kumar Ray]{The Institute of Mathematical Sciences, 4th Cross Street, CIT Campus, Tharamani
Chennai, Tamil Nadu 600113, India and Homi Bhabha National Institute,
Mumbai, India }
\email{samya@imsc.res.in}
\begin{document}
\begin{abstract}
In this article, we develop a framework for the joint functional calculus of commuting pair of \( \RE \) operators on Banach spaces. We establish a transfer principle that relates the bounded holomorphic functional calculus for pair of $\RE$ operators to that of their associated sectorial counterparts. In addition, we prove a joint dilation theorem for commuting pair of \( \RE \) operators on a broad class of Banach spaces. As a key application, we obtain an equivalent set of criteria on \( L^p \)-spaces for \( 1 < p < \infty \) that determine when a commuting pair of $\RE$ operators admits a joint bounded functional calculus.
\end{abstract}
\maketitle

\section{Introduction}
Let \( X \) be a Banach space and \( T \in \mathcal{B}(X) \) a bounded linear operator. In many important problems involving operator theory, harmonic analysis, and partial differential equations, it is crucial to assign a rigorous meaning to expressions of the form \( f(T) \), where \( f \) is a bounded holomorphic function defined on a suitable domain in the complex plane. The foundational framework for this is provided by the theory of \emph{bounded holomorphic functional calculus}, which generalizes the polynomial calculus in a way that preserves boundedness and algebraic structure. Moreover, in the setting of Hilbert spaces, one of the cornerstone results in this direction is the \emph{von Neumann inequality} \cite{MR43386}, which says that for any contraction \( T \) on a Hilbert space and \( f \) a bounded holomorphic function on an open set containing the closed unit disc \( \overline{\mathbb{D}} \), we have
\[
\|f(T)\| \leq \|f\|_{\infty, \mathbb{D}}.
\]
This inequality ensures that the operator \( f(T) \) is well-defined and bounded, and it leads naturally to the development of an \( H^\infty \)-functional calculus for contractions. The scope of this inequality was extended to tuple of commuting contractions by Ando \cite{MR155193}, who showed that a commuting tuple of contractions \( (T_1, T_2) \) on a Hilbert space also satisfies a joint version of the von Neumann inequality, that is, for any \( f \) a bounded holomorphic function on an open set containing the closed unit disc \( \overline{\mathbb{D}^2} \), we have
\[
\|f(T_1,T_2)\| \leq \|f\|_{\infty, \mathbb{D}^2}.
\] This gives rise to a bounded $H^\infty$-functional calculus for a commuting pair of contractions on Hilbert spaces. We refer the reader to \cite{MR275190} for various applications of the above-mentioned results.

However, the multivariate extension of this result meets a fundamental obstruction. In a striking result, Varopoulos together with Kaijser \cite{MR355642}  (also see \cite{MR3897976}) constructed an explicit example of a triple of commuting contractions on a Hilbert space for which the von Neumann inequality fails. Interestingly, it remains an open question whether the von Neumann inequality holds up to a universal constant for triples of commuting contractions. For a comprehensive discussion of this long-standing problem, we refer the reader to \cite{MR1818047}[Chapter 1] and \cite{MR1976867}[Chapter 5]. This demonstrates that in the multivariate setting, the naive extension of bounded functional calculus breaks down even in the Hilbert space setting. It also illustrates the delicate nature of multivariate operator theory, where phenomena in higher dimensions can diverge sharply from the two-variable case.

In contrast to contractions on Hilbert spaces, the theory of \emph{sectorial operators} on Banach spaces offers a robust and flexible framework for defining a functional calculus. The development of a bounded holomorphic functional calculus for sectorial operators, particularly the \( H^\infty \)-functional calculus was pioneered by McIntosh and collaborators \cite{MR912940, MR1364554}. This theory has deep applications in the analysis of partial differential equations, harmonic analysis, and the theory of semigroups \cite{MR2244037}. A particularly important extension is the notion of \emph{joint functional calculus} for families of sectorial operators with \emph{commuting resolvents}. This concept, introduced in \cite{albrecht1994functional}, provides a framework for joint \( H^\infty \)-functional calculus and plays a pivotal role in studying systems of evolution equations, especially those involving \emph{maximal regularity} on Banach spaces \cite{MR1635157}.

Motivated by the success of sectorial operator theory, one naturally seeks analogous results in the discrete setting. This leads to the study of \emph{Ritt operators}, which are discrete analogues of sectorial operators and arise naturally in the context of discrete semigroups, ergodic theory, and time-discretized evolution equations (see \cite{MR2980915,MR3060752}). Christian Le Merdy initiated a systematic study of the functional calculus for Ritt operators in \cite{MR3293430}. Together with his collaborators, further results were developed in \cite{MR3265289,MR3683097,MR3293430}, drawing analogies with the theory of sectorial operators and providing characterizations of Ritt operators on $L^p$-spaces that admit a bounded $H^\infty$-functional calculus. The joint functional calculus for commuting tuple of Ritt operators was introduced in \cite{MR3928691} and \cite{MR4043874}. Moreover, many multivariate anlogoues of the above-mentioned results were also obtained.

Recently, in \cite{MR4819960}, the class of Ritt operators was generalized to the so-called \( \RE \) operators. These operators allow for spectra that intersect the unit circle in finitely many points, provided that suitable decay conditions are met near those spectral points. Le Merdy and Bouabdillah established several fundamental properties of the \( H^\infty \)-functional calculus for \( \RE \) operators, including appropriate square function estimates \cite{bouabdillah2024squarefunctionsassociatedritte}.
 In this paper, we develop a joint functional calculus for commuting pair of $\RE$ operators. Our first result is the following multivariate transfer principle.
\begin{theorem}\label{transferprin}(Transfer Principle): Let $E=\{\xi_1,\dots,\xi_N\}$ be a finite subset of $\mathbb{T}$. Let $(T_1,T_2)$ be a commuting pair of $\RE$ operators on a Banach space $X$. For any $i=1,...,N $, denote $A^1_i=I_{X}-\overline{\xi}_i T_1$ and $A^2_i=I_{X}-\overline{\xi}_i T_2$. Then the following statements are equivalent. \begin{enumerate}
    \item There exist $\theta_i,\theta_j\in(0,\frac{\pi}{2})$, $1\leq i,j\leq N$ such that $(A^1_i,A^2_j)$ admits bounded $H^{\infty}(\Sigma_{\theta_{i}}\times\Sigma_{\theta_{j}})$-functional calculus \item There exists $s_1,s_2\in(0,1)$ such that $(T_1,T_2)$ admits bounded $H^{\infty}(E_{s_1}\times E_{s_2})$-functional calculus.
\end{enumerate}
\end{theorem}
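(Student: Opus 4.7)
The plan is to connect the two calculi through the family of affine changes of variable $\varphi_i(z)=1-\overline{\xi}_i z$, for $i=1,\dots,N$, which send a neighborhood of $\xi_i$ in $E_{s}$ into a sector $\Sigma_{\theta_i}$ around the origin for a suitable $\theta_i$ depending on $s$. Under $\varphi_i\times\varphi_j$, the local behavior of $(T_1,T_2)$ near the spectral point $(\xi_i,\xi_j)$ is converted into the local behavior of $(A^1_i,A^2_j)$ near $(0,0)$, and the polydomain $E_{s_1}\times E_{s_2}$ is covered by these $N^2$ bi-local charts. The proof then amounts to making this covering compatible with the bi-variate Cauchy-type integrals that define the joint $H^\infty$-calculi on each side, and passing from a dense subclass of regular test functions to all of $H^\infty$ via the standard Convergence Lemma.

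For the direction $(2)\Rightarrow(1)$, I would start from a regular $g\in H^\infty(\Sigma_{\theta_i}\times\Sigma_{\theta_j})$ with sufficient decay at $0$ and $\infty$, and lift it to a function $f\in H^\infty(E_{s_1}\times E_{s_2})$ by setting $f(z_1,z_2)=g(1-\overline{\xi}_i z_1,\,1-\overline{\xi}_j z_2)$ on the chart around $(\xi_i,\xi_j)$ and extending by a holomorphic cut-off that vanishes near every other spectral point of $E\times E$. Because of the $\RE$ resolvent decay built into $T_1,T_2$, the bi-variate Cauchy integrals defining $f(T_1,T_2)$ and $g(A^1_i,A^2_j)$ agree up to a negligible remainder, producing the bound $\|g(A^1_i,A^2_j)\|\leq C\|f\|_\infty\leq C'\|g\|_\infty$, which then extends by density.

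For the direction $(1)\Rightarrow(2)$, the strategy is to decompose a given regular $f\in H^\infty(E_{s_1}\times E_{s_2})$ holomorphically as
\[
f(z_1,z_2)=\sum_{i,j=1}^{N}g_{ij}(1-\overline{\xi}_i z_1,\,1-\overline{\xi}_j z_2),
\]
with each $g_{ij}\in H^\infty(\Sigma_{\theta_i}\times\Sigma_{\theta_j})$ of controlled norm. Such a decomposition should arise from a bi-variate Cauchy integral on a contour $\Gamma_1\times\Gamma_2\subset E_{s_1}\times E_{s_2}$ that splits into local loops around each $\xi_i$ in each coordinate; after the change of variable $\varphi_i\times\varphi_j$, each resulting piece becomes the sectorial Cauchy integral representing $g_{ij}(A^1_i,A^2_j)$. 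Applying hypothesis $(1)$ to each summand and summing then yields the required uniform bound for $f(T_1,T_2)$.

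The hardest step I foresee is engineering this holomorphic decomposition of $f$ with a controlled total mass $\sum_{i,j}\|g_{ij}\|_\infty\leq C\|f\|_\infty$. A smooth partition of unity is unavailable since it would destroy holomorphicity, so one must rely on contour deformation together with the precise geometric compatibility between $E_s$ and the family of sectors $\varphi_i^{-1}(\Sigma_{\theta_i})$; this is the two-variable analogue of the decomposition that drives the univariate $\RE$ transfer principle in \cite{MR4819960}. Once this local holomorphic decomposition lemma is established, the remaining work reduces to routine density arguments and the Convergence Lemma on the regular test-function subclass.
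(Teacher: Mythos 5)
Your high-level plan---use the affine maps $\varphi_i(z)=1-\overline{\xi}_i z$ to translate between the two calculi, and split a bivariate Cauchy integral into pieces attached to the spectral points of $E\times E$---is the right skeleton and is what the paper does, but you have explicitly left open the one step that constitutes essentially the whole proof, namely the holomorphic decomposition with controlled mass in direction $(1)\Rightarrow(2)$. There is no off-the-shelf lemma that produces it; the paper assembles it from four ingredients you do not mention. First, it reduces to polynomials via Proposition~\ref{polygonally bounded} (joint polygonal boundedness is equivalent to the joint $H^\infty(E_s\times E_s)$ calculus), so one never needs a Convergence Lemma on a dense class of decaying $H^\infty$ functions but only uniform polynomial bounds. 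Second, the contour is not a collection of ``local loops around each $\xi_i$'': those points lie on $\partial E_s$, so one cannot encircle them inside $E_s$. Instead the paper inserts auxiliary interior points $z_1,\dots,z_p$ and builds a convex polygon $\Delta$ whose vertices include both the $\xi_i$'s and these auxiliary points; only the arcs near the $\xi_i$'s require the sectorial machinery (Lemmas~\ref{g_ij}, \ref{zeta-i,zeta-j}), while the pieces corresponding to the interior auxiliary points are controlled by the elementary resolvent bound (Lemma~\ref{zeta_ij}). Third, the paper regularizes by replacing $(T_1,T_2)$ with $(\rho T_1,\rho T_2)$ and $(A^1_i,A^2_j)$ with $(A^{\rho}_1,A^{\rho}_2)=((1-\rho)I+\rho A^1_i,(1-\rho)I+\rho A^2_j)$, $\rho\in(0,1)$, so that the contour stays at a positive distance from the spectrum; Lemmas~\ref{p independent of h} and~\ref{double h of A-rho} are then the nontrivial technical work needed to get bounds that are uniform in $\rho$, after which one lets $\rho\to 1$. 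Fourth, each local piece $g_{ij}=\phi_{ij}\circ(\varphi_i^{-1}\times\varphi_j^{-1})$ is \emph{not} in $H^\infty_0(\Sigma_\theta\times\Sigma_\theta)$ (it has nonzero limits at $0$ and $\infty$), so one must peel off correction terms via the $F_{11},F_{12},F_{21},F_{22}$ decomposition of Lemmas~\ref{lemma for h} and~\ref{second h} to extract a genuine $H^\infty_0$ part $h$ before the sectorial hypothesis can be applied. Without these four pieces the plan cannot be completed, and you have in effect conceded the point by labelling the decomposition ``the hardest step I foresee.''

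On the easier direction $(2)\Rightarrow(1)$, your idea of a ``holomorphic cut-off that vanishes near every other spectral point'' is both unnecessary and unavailable: a nontrivial holomorphic function on the connected domain $E_{s_1}\times E_{s_2}$ cannot vanish on an open set. The paper's argument here is simply that $E_s\subset\Sigma(\xi_j,\sin^{-1}s)$, so for any $g\in H^\infty_0(\Sigma_{\sin^{-1}s}\times\Sigma_{\sin^{-1}s})$ the pullback $\phi(z_1,z_2)=g(1-\overline{\xi}_i z_1,\,1-\overline{\xi}_j z_2)$ is already a well-defined element of $H^\infty_0(E_s\times E_s)$, and one concludes from $\phi(T_1,T_2)=g(A^1_i,A^2_j)$ together with the sup-norm inclusion $\|g\|_{\infty,E^i_s\times E^j_s}\leq\|g\|_{\infty,\Sigma_{\sin^{-1}s}\times\Sigma_{\sin^{-1}s}}$. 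Drop the cut-off and this direction becomes a two-line observation.
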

We refer the reader to Section \ref{prelim} for any unexplained notations. To prove Theorem \ref{transferprin}, we take a slightly different route from \cite{MR4819960} in the multivariate setting and our proof follows \cite{MR3928691} more closely but requires a few more technical lemmas. Our next theorem is the following multivariate dilation theorem. This generalizes some results in \cite{MR3928691}.
\begin{theorem}\label{dilationthm}Let $ 1< p<\infty$ and
     $X$ be a reflexive Banach space such that both $X$ and $X^*$ have finite cotype. Let $E=\{\xi_1,\dots,\xi_N\}$ be a finite subset of $\mathbb{T}$. Suppose $(T_1,T_2)$ is a commuting pair of bounded operators on $X$ such that each $T_i$ is a $\RE$ operator of type $r_i$ with bounded $H^{\infty}(E_{s_i})$ functional calculus. Then there exist a measure space ${\Omega} '$, a commuting pair of isometries $(U_1,U_2)$ on $L^p(\Omega',X)$, together with two bounded maps $J:X\rightarrow L^p(\Omega',X)$ and $Q:L^p({\Omega'},X) \rightarrow X$,  such that 
    $$T^{i_1}_1T^{i_2}_2=QU^{i_1}_1U^{i_2}_2J$$ 
    for all $i_1,i_2 \geq0$.
\end{theorem}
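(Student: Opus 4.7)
The plan is to reduce the discrete joint dilation for the $\RE$ pair $(T_1,T_2)$ to a joint dilation for an associated pair of commuting bounded sectorial operators, and then to lift the sectorial result back to the discrete setting. First, I would apply the one-variable transfer principle from \cite{MR4819960} to each $T_k$ separately, passing to the sectorial data $A_i^k := I_X - \bar\xi_i T_k$ for $k\in\{1,2\}$ and $1\le i \le N$. Each $A_i^k$ is then bounded sectorial of angle less than $\pi/2$ and admits a bounded $H^\infty(\Sigma_{\theta_k})$-calculus. Since $T_1$ and $T_2$ commute, so do all the operators $A_i^1$ with all the operators $A_j^2$, and the individual bounded $H^\infty$-calculi produce bounded commuting Riesz-type spectral projections $P_i^k$ associated with the spectral points $\xi_i$. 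This yields a finite Peirce-type decomposition of $X$ that reduces the problem to a finite family of commuting sectorial pairs $(A_i^1,A_j^2)$ acting on common spectral pieces $X_{ij}=P_i^1P_j^2(X)$.

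Next, for each such pair I would invoke a joint dilation theorem for commuting bounded sectorial operators with bounded $H^\infty$-calculus on reflexive Banach spaces $X$ for which $X$ and $X^*$ have finite cotype. This is expected to produce, on an $L^p$-Bochner space $L^p(\Omega_{ij},X_{ij})$, two commuting $C_0$-groups of isometries jointly dilating the bounded semigroups $(e^{-tA_i^1})_{t\ge 0}$ and $(e^{-sA_j^2})_{s\ge 0}$. To convert this continuous statement to the discrete one, I would use the local identity $T_k = \xi_i(I - A_i^k)$ valid on the $(i,j)$-spectral piece together with a Dunford-type contour-integral representation of $(I-A)^n$ in terms of $e^{-tA}$, so that $T_1^{i_1}T_2^{i_2}$ is realized through the isometric groups composed with multiplication by the unimodular constant $\xi_i^{i_1}\xi_j^{i_2}$ (which is itself an isometry on $L^p(\Omega_{ij},X_{ij})$). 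Assembling the local dilations on $\Omega' := \bigsqcup_{i,j}\Omega_{ij}$ and defining $U_k := \bigoplus_{i,j} U_k^{ij}$, $J := \bigoplus_{i,j} J_{ij}P_i^1 P_j^2$, and $Q := \sum_{i,j} Q_{ij}$ should yield the required global joint loose dilation.

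The principal obstacle will be establishing the joint commuting sectorial dilation in the middle step: it is not automatic that two commuting sectorial operators with individual bounded $H^\infty$-calculi admit commuting isometric-group dilations, and this is where the assumption that both $X$ and $X^*$ have finite cotype is essential. That hypothesis provides enough $\gamma$-boundedness or square-function control on the $H^\infty$-calculi to glue two single-operator dilations into a joint one while preserving commutativity, in parallel with the Ritt-case construction of \cite{MR3928691}. A secondary, more bookkeeping issue is verifying that the discretization and the assembly across the finitely many spectral pairs $(\xi_i,\xi_j)$ are compatible; this is handled by the pairwise-orthogonality of the Riesz projections $P_i^k$ and their commutation across $k=1,2$, which ultimately reflects the commutativity of $T_1$ and $T_2$.
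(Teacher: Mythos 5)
Your proposal departs substantially from the paper's construction, and along the way it relies on several ingredients that either do not exist or are not available in the form you use them.

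The most serious problem is the claim that the bounded $H^\infty$-calculus for each $A_i^k = I_X - \bar\xi_i T_k$ produces ``bounded commuting Riesz-type spectral projections $P_i^k$ associated with the spectral points $\xi_i$,'' leading to a decomposition $X=\bigoplus_{i,j}X_{ij}$. For a $\RE$ operator the points $\xi_i\in E$ are not isolated in $\sigma(T_k)$ --- the spectrum approaches $\mathbb T$ tangentially at $\xi_i$ while filling out a region inside $\mathbb D$ --- so there is no Riesz projection localized at $\xi_i$ in any classical sense. What the paper actually uses is the mean ergodic decomposition
\[
X=\bigoplus_{j=1}^N \text{Ker}(I_X-\bar\xi_j T)\oplus\overline{\mathrm{Ran}}\Bigl(\prod_{j=1}^N(I_X-\bar\xi_j T)\Bigr),
\]
which has $N+1$ pieces, not $N$, and whose nontrivial ``range'' summand still carries all $N$ singular boundary points at once. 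On that summand you cannot treat $T$ as if it saw only a single $\xi_i$, so your reduction to a ``finite family of commuting sectorial pairs $(A_i^1,A_j^2)$ on common spectral pieces'' breaks down at the outset. A second gap: the middle step invokes a ``joint dilation theorem for commuting bounded sectorial operators with bounded $H^\infty$-calculus'' as a black box, but no such off-the-shelf theorem is cited or available; even in \cite{MR3928691} (the Ritt case) the joint dilation is built directly, not deduced from a sectorial dilation theorem. Third, the proposed continuous-to-discrete conversion --- dilating the semigroups $e^{-tA_i^k}$ and recovering $T_k^n$ by a Dunford contour integral --- is not carried through; $(I-A)^n$ is not a bounded function of the semigroup in a way that transports a $C_0$-group dilation to a power dilation, and you give no argument that it does.

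The paper's route is considerably more hands-on and stays entirely in the discrete setting. It first proves the single-operator dilation (Theorem \ref{dilation}) by taking $L^p(\Omega,X)=L^p([N],X)\oplus L^p(\Omega_0,X)$ with $\Omega_0=\{\pm1\}^{\mathbb Z}$, setting the isometry $V=D\oplus U^2$ where $D$ is the diagonal unitary $(x_j)\mapsto(\xi_j x_j)$ and $U$ is the Rademacher shift, and then explicitly writing $J$ (resp. $\tilde J$) as a square-function-type series in the powers of $T$ (resp. $T^*$); boundedness of these maps is exactly where finite cotype of $X$ and $X^*$ and the square function estimate of \cite{bouabdillah2024squarefunctionsassociatedritte} enter, together with Lemma~\ref{fixedx lemma} (the ``$\sum a_m T^m\prod(I-\bar\xi_j T)x=x$'' reproduction identity) to recover $T^n$ from $\langle V^nJx,\tilde Jy\rangle$. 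For the joint statement the paper then proves Lemma~\ref{composion of J and S}, a simple intertwining $(\oplus^N S\oplus(I\otimes S))J=JS$ for any $S$ commuting with $T$, and uses it to push $T_2^{i_2}$ through $J_1$ and compose the two single-operator dilations coordinatewise. If you want to fix your write-up, replacing the fictitious ``Riesz projections at $\xi_i$'' by the mean ergodic decomposition, and replacing the unavailable sectorial joint-dilation theorem by a direct construction plus an intertwining lemma, brings you back onto the paper's actual path.
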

The existence of a bounded functional calculus is intimately tied to dilation theory. Sz.-Nagy and Foias \cite{MR3897976} showed that any contraction on a Hilbert space admits a unitary dilation, and Ando \cite{MR155193} extended this to pairs of commuting contractions. These results yield von Neumann inequalities in one and two variables. However, the failure of the von Neumann inequality in three variables shows that three commuting contractions may lack a joint unitary dilation. Beyond Hilbert spaces, Akcoglu and Sucheston \cite{MR458230} proved that positive contractions on $L^p$-spaces ($1<p\neq 2<\infty$) have onto isometric dilations, which implies that Matsaev’s conjecture holds for this class. For further developments, see \cite{MR481928}, \cite{MR615568}, \cite{MR4611833} and \cite{MR4092689} for multivariate cases. Recent work in \cite{MR3265289} and\cite{MR3683097} characterizes bounded functional calculus for Ritt operators via the notion of loose dilation, which was further generalized in \cite{MR3928691} in the multivariate setting. In this article, we prove the following theorem in the context of $\text{Ritt}_E$ operators.
\begin{theorem}\label{CLASS} Let $E=\{\xi_1,\dots,\xi_N\}$ be a finite subset of $\mathbb{T}$. Let $1<p\neq 2<\infty$ and $(T_1,T_2)$ be a commuting pair of $\text{Ritt}_E$ operators on $L^p(\Omega).$ Then the following assertions are equivalent.
\begin{itemize}
\item[1.]There exist $s_i\in(0,\frac{\pi}{2}),$ $1\leq i\leq 2,$ such that $(T_1,T_2)$ admits a joint bounded $H^\infty(
E_{s_1}\times E_{s_2})$-functional calculus
\item[2.] Each $T_i,$ $1\leq i\leq 2$ is $R$-$\RE$ and $(T_1,T_2)$ admits a joint isometric loose dilation.
\item[3.] Each $T_i,$ $1\leq i\leq 2$ is $R$-$\RE$ and $(T_1,T_2)$ is jointly $p$-completely polynomially bounded.
\item[4.] Each $T_i,$ $1\leq i\leq 2$ is $R$-$\RE$ and $(T_1,T_2)$ is jointly $p$-polynomially bounded.
\item[5.] Each $T_i,$ $1\leq i\leq 2$ is $R$-$\RE$ and $I-\overline{\xi_j}T_i$ admits a bounded $H^\infty(\Sigma_{\theta_{ij}})$-functional calculus for $\theta_{ij}\in(0,\pi)$,  for each $1\leq i\leq 2$ and $1\leq j\leq N.$
\end{itemize}
\end{theorem}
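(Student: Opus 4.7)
The plan is to establish the cycle $1 \Rightarrow 2 \Rightarrow 3 \Rightarrow 4 \Rightarrow 5 \Rightarrow 1$, using Theorem \ref{dilationthm} and Theorem \ref{transferprin} as the principal inputs. For $1 \Rightarrow 2$, I would first extract the single-variable calculus for each $T_i$ from the joint one; since $L^p$ has finite cotype and Pisier's property $(\alpha)$, a standard Kahane-contraction / randomisation argument upgrades the bounded $H^\infty$-calculus to an $R$-bounded one, so each $T_i$ is $R$-$\RE$. Because $L^p$ and $L^{p'}$ both have finite cotype for $1<p<\infty$, all hypotheses of Theorem \ref{dilationthm} are met and it directly produces the desired joint isometric loose dilation.

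For $2 \Rightarrow 3$, the loose dilation identity extends to $P(T_1,T_2) = Q P(U_1,U_2) J$ for every polynomial $P$, and tensorially to matrix-valued $P$. Since $L^p(\Omega',L^p(\Omega)) \cong L^p(\Omega'\times\Omega)$, the task reduces to bounding polynomials in commuting isometries on an $L^p$-space; I would further dilate $(U_1,U_2)$ to commuting invertible isometries of an $L^p$-space and apply the joint polynomial bound available in that setting (the $L^p$-analogue of the two-variable von Neumann inequality valid for invertible isometries). Because this argument tensorises with $M_n$, the complete version follows. Then $3\Rightarrow 4$ is immediate from the definitions, and $4\Rightarrow 5$ follows by restricting the joint $p$-polynomial estimate to single-variable polynomials and invoking the single-variable characterisation from \cite{MR4819960}, which under the $R$-$\RE$ hypothesis identifies $p$-polynomial boundedness of $T_i$ with the bounded $H^\infty(\Sigma_{\theta_i})$-calculus of $I-\overline{\xi_i} T_i$.

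The hardest step is $5 \Rightarrow 1$. Under assumption 5, each $A^k_i := I-\overline{\xi_i} T_k$ is $R$-sectorial on $L^p$ with a bounded individual $H^\infty(\Sigma_{\theta_i})$-calculus, and the pairs $A^1_i, A^2_j$ have commuting resolvents since $T_1$ and $T_2$ commute. Because $L^p$ is UMD for $1<p<\infty$, I would invoke the joint $H^\infty$-calculus framework for commuting $R$-sectorial operators developed in \cite{albrecht1994functional,MR1635157} (in its Kalton-Weis form available on UMD spaces) to obtain a joint bounded $H^\infty(\Sigma_{\theta_i}\times\Sigma_{\theta_j})$-calculus for $(A^1_i, A^2_j)$. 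Theorem \ref{transferprin} then converts this into a joint bounded $H^\infty(E_{s_1}\times E_{s_2})$-calculus for $(T_1,T_2)$, closing the loop.

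The main obstacle I anticipate is precisely this $5\Rightarrow 1$ step: one must keep the angles $\theta_i$ uniform across the finite set $E$, verify all technical hypotheses needed for the Kalton-Weis joint calculus (UMD, $R$-sectoriality, commuting resolvents together with property $(\alpha)$), and carefully align its output with the hypotheses of Theorem \ref{transferprin}. A secondary difficulty is $2 \Rightarrow 3$, where the appropriate $L^p$-dilation for commuting isometries must be invoked in a way that behaves well under tensoring with $M_n$, so as to yield complete polynomial bounds rather than merely polynomial ones.
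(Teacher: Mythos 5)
Your global strategy matches the paper's: both proofs channel everything through Theorem~\ref{dilationthm} to pass from the joint $H^\infty(E_{s_1}\times E_{s_2})$-calculus to a joint isometric loose dilation, and through Theorem~\ref{transferprin} (together with the joint functional calculus property of $L^p$-spaces from \cite{MR1635157}) to come back from the individual sectorial $H^\infty$-calculi of the $I-\overline{\xi_i}T_i$ to the joint $\RE$-calculus. The organization of the middle implications differs, however. You propose the linear chain $(1)\Rightarrow(2)\Rightarrow(3)\Rightarrow(4)\Rightarrow(5)\Rightarrow(1)$ and try to supply each arrow directly; the paper instead proves $(1)\Rightarrow(2)$, observes $(3)\Rightarrow(2)$, shows $(2)\Rightarrow(5)\Rightarrow(1)$, and for the remaining equivalences involving polynomial boundedness it delegates entirely to \cite{MR3265289}[Proposition 4.7] (the one-operator version of loose dilation $\Leftrightarrow$ $p$-complete polynomial boundedness $\Leftrightarrow$ $p$-polynomial boundedness $\Leftrightarrow$ $H^\infty$-calculus, transported to the pair by applying it to $(\overline{\xi}_iT_1,\overline{\xi}_jT_2)$). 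Your $(1)\Rightarrow(2)$ also invokes a Kahane-contraction/randomization argument to get $R$-$\RE$, where the paper simply cites \cite{bouabdillah2024squarefunctionsassociatedritte}[Corollary~6.7].

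The step $(2)\Rightarrow(3)$ is where your proposal has a genuine gap. You reduce to bounding matrix polynomials in a commuting pair $(U_1,U_2)$ of \emph{onto isometries} on $L^p(\Omega')$ against the corresponding polynomials in the bilateral shift pair $(S_1,S_2)$ on $\ell^p(\mathbb Z\times\mathbb Z)$, and you propose to do this by ``further dilating $(U_1,U_2)$ to commuting invertible isometries'' and then applying ``the $L^p$-analogue of the two-variable von Neumann inequality valid for invertible isometries.'' Neither ingredient is given a source: there is no automatic joint invertible dilation of a commuting pair of onto isometries on $L^p$ that you can appeal to off the shelf (onto isometries are already invertible, so presumably you mean the case where the dilating $U_i$ are not assumed onto; but then the existence of a joint invertible isometric extension of a commuting pair on $L^p$ is itself nontrivial and must be cited or proved), and the two-variable $p$-completely bounded transference estimate
\[
\big\|(\phi_{ij}(U_1,U_2))\big\|_{L^p([n],L^p(\Omega'))}\;\lesssim\;\big\|(\phi_{ij}(S_1,S_2))\big\|_{L^p([n],\ell^p(\mathbb Z\times\mathbb Z))}
\]
is exactly the content of the Coifman--Weiss-type universality result that the paper outsources to \cite{MR3265289}[Proposition 4.7] (see also \cite{MR3928691}). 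Without naming and verifying those, the arrow does not close; the rest of your chain ($(3)\Rightarrow(4)$ by specialization, $(4)\Rightarrow(5)$ by restricting to one-variable polynomials and using the single-operator characterization from \cite{MR4819960}, $(5)\Rightarrow(1)$ via joint functional calculus plus Theorem~\ref{transferprin}) is sound and, if anything, more transparently organized than the paper's own terse proof, which as written never explicitly shows what implies $(3)$ or where $(4)$ enters, relying silently on the cited multi-way equivalence.
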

We prove an appropriate analogue of the above theorem in the context of Hilbert space, which in turn provides an alternate proof of some of the results in \cite{MR4861029}. In the main point where we differ is the use of transfer principle (Theorem \ref{transferprin}) instead of Franks-McIntosh type decomposition. 

For notational simplicity, we present all the results and proofs for two commuting operators. However, all of our results are valid for arbitrary tuple of commuting operators.

The plan of the paper is the following. In Section \ref{prelim}, we introduce all the necessary definitions and a few technical lemmas which will be useful in later sections. In Section \ref{jointritte} we introduce the notion of joint functional calculus for commuting tuple of $\text{Ritt}_E$ operators and prove the transfer principle, i.e. Theorem \ref{transferprin} in Section \ref{sectran}. In the last Section \ref{alldithm}, we prove Theorem \ref{dilationthm} and Theorem \ref{CLASS}. 
 \section{Notation and Definition}\label{prelim}
 Let $X$ be a Banach space. We denote $B(X)$ as the set of all bounded linear transformations from $X$ to $X$. Let us denote $\sigma(T)$ the spectrum of $T.$ For $a,b>0$,
 we write $a\lesssim b$ if $a\leq cb$, where $c>0$ independent of $a$ and $b$. Let $(\Omega, \mathcal{F}, \mu)$ be a $\sigma$-finite measure space and let $1 \leq p \leq \infty$. We denote $L^p(\Omega, X)$ to be the usual Bochner space. We denote by $X\oplus^p X\oplus^pX\dots\oplus^pX=\{(x_1,\dots,x_N): x_i\in X,1\leq i \leq N\}$ equipped with the norm $\lVert (x_1,\dots,x_N)\rVert=\left(\sum\limits^N_{i=1}\lVert x_i\rVert^p\right)^{\frac{1}{p}}$. Note that  we can identify $X\oplus^p X\oplus^pX\dots\oplus^pX$ with $L^p([N],X)$, where $[N]=\{1,2,\dots,N\}$ is measure space with discrete measure. Let $S\in B(X)$, define $\oplus^NS=\underbrace{S\oplus S\dots\oplus S}_{N\ \text{times}}$. For any set $V$ and $f:V\rightarrow\mathbb{C}$, we define $\lVert f\rVert_{\infty,V}=sup\{\lvert f(z)\rvert: z\in V\}.$ Let us define $\mathbb{C}[z]$ as the set of one-variable polynomials with complex coefficients. Similarly, we define $\mathbb{C}[z_1,z_2]$ as the set of two-variable polynomials with complex coefficients.  
 \begin{definition}[Sector] Let  $\omega \in(0,\pi)$, the set $\Sigma_\omega=\{z\in\mathbb{C}:\lvert Arg(z) \rvert<\omega\}$ is called open sector of angle $2\omega$ and the set $\overline{\Sigma}_{\omega}=\{z\in\mathbb{C}:\lvert Arg(z) \rvert \leq \omega\}$ is called closed sector of angle $2\omega$.  
 \end{definition}
 \begin{definition}[Sectorial Operator]Let any $\omega \in(0,\pi).$ An operator $T\in B(X)$ is said to be $\omega$-sectorial operator if
 \begin{enumerate}
     \item [(i)]$\sigma(T)\subseteq \overline\Sigma_\omega$ .
     \item[(ii)] For all $\theta\in(\omega,\pi)$, there exits a $C_\theta\geq0$ such that $\lVert zR(T,z)\rVert\leq C_{\theta}$ for all $z\in \mathbb{C}\setminus\overline{\Sigma}_\theta$.
 \end{enumerate}
Let $\theta\in(0,\frac{\pi}{2})$, consider the set $H^{\infty}_{0}(\Sigma_{\theta})$ which is the collection of  all bounded holomorphic functions such that
\begin{equation*}   
|f(z)| \lesssim \frac{|z|^{s}}{1 + |z|^{2s}}, \text{for some}\hspace{0.1 cm}s>0. 
\end{equation*}
We recommend to interested readers \cite{MR3293430} about bounded functional calculus for sectorial operators.
\end{definition}

 Let $\theta \in (0,\pi)$ and $\Gamma_{\theta}$ be the boundary of $\Sigma_{\theta}$, oriented counter-clockwise . For $\theta_i \in (0,\pi)$, $1 \leq i \leq 2$, we denote $H^{\infty}_0 \Big( \Sigma_{\theta_1}\times \Sigma_{\theta_2} \Big)$ to be the set of all bounded holomorphic functions $f:  \Sigma_{\theta_1}\times\Sigma_{\theta_2} \to \mathbb{C}$ with the property that there exist constants $C,s_1,s_2 > 0$, depending only on $f$, such that
 \[
|f(z_1,z_2)| \leq C  \frac{|z_1|^{s_1}}{1 + |z_1|^{2s_1}}.\frac{|z_2|^{s_2}}{1 + |z_2|^{2s_2}} 
\quad \text{for all } (z_1,z_2) \in \Sigma_{\theta_1}\times\Sigma_{\theta_2}.
\]
Let $(A_1, A_2)$ be a pair of commuting operators  such that each $A_i$ is sectorial of type $\omega_i\in (0, \pi)$, $i=1,2$. Let $\omega_1 < \theta_1< \pi$ and $ \omega_2 <\theta_2<\pi$. We define
\begin{equation}
    f(A_1,A_2) := \left( \frac{1}{2\pi i} \right)^2 \int_{ \Gamma_{\theta_1}\times\Gamma_{\theta_2}} f(z_1,z_2) R(z_1, A_1)R(z_2,A_2) dz_1dz_2.\label{sectorial functional calculus}
\end{equation}
Let $H^{\infty}_{0,1}(\Sigma_{\theta_1}\times \Sigma_{\theta_2})=H^{\infty}_{0}(\Sigma_{\theta_1})\bigoplus H^{\infty}_0(\Sigma_{\theta_2})\bigoplus H^{\infty}_0(\Sigma_{\theta_1}\times \Sigma_{\theta_2})$.
    Hence, for any $f\in H^{\infty}_{0,1}(E_{s_1}\times E_{s_2})$, we can write $f(z_1,z_2)=f_1(z_1)+f_2(z_2)+f_{12}(z_1,z_2)$ for all $(z_1,z_2)\in \Sigma_{\theta_1}\times \Sigma_{\theta_2}$, where $f_1\in H^{\infty}_{0}(\Sigma_{\theta_1}),f_2\in H^{\infty}_{0}(\Sigma_{\theta_2})$ and $f_{12}\in H^{\infty}_0(\Sigma_{\theta_1}\times \Sigma_{\theta_2})$. 
  Let us define $f(A_1,A_2)=f_1(A_1)+f_2(A_2)+f_{12}   (A_1,A_2).$
  
\begin{definition} We say that $(A_1,A_2)$ admits a joint bounded $H^{\infty}( \Sigma_{\theta_1}\times\Sigma_{\theta_2})$- functional calculus if there exists a constant $C > 0$ such that for all $f \in H_{0,1}^{\infty}(\Sigma_{\theta_1}\times\Sigma_{\theta_2})$ we have 
\[
\|f(A_1,A_2)\| \leq C \|f\|_{\infty,  \Sigma_{\theta_1}\times\Sigma_{\theta_2}}.
\]
\end{definition}
 \begin{definition}[Polygonal type operator and Polygonal functional calculus] An operator $T: X\rightarrow X$ is called a polygonal type operator if there exists a convex, open polygon $\Delta\subseteq\mathbb{D}$ such that $\sigma(T)\subseteq \overline\Delta$. Let $T$ be a polygonal type operator, we say T has bounded polygonal functional calculus if there exist a polygon $\Delta$ and a constant $K\geq1$ such that $\lVert \phi(T)\rVert\leq K\lVert\phi\rVert_{\infty,\Delta}$ for all $\phi\in \mathbb{C}[ z ]$.
 \end{definition}
 \begin{definition}($\RE$ Operator) Let $E=\{\xi_1,\xi_2,...,\xi_n\}$ be a subset of $\mathbb{T}$ . Let $T\in B(X)$, we say that $T$ is $\RE$ operator if $ \sigma(T)\subseteq\overline{\mathbb{D}}$ and there is a constant $c>0$ such that 
 \begin{equation*}
     \lVert R(z,T) \rVert \leq c\prod^N_{j=1}\lvert\xi_j-z\rvert^{-1}, \qquad z\in \mathbb{C},1<\lvert z \rvert<2.
 \end{equation*}
 \end{definition}
 \begin{remark}
When $E=\{1\}$ the operator $T$ is called a Ritt operator \cite{MR3293430}.
 We denote the interior of convex hull of $D(0,r)\cup E$ by $E_r$.
 \end{remark}
 \begin{lemma}[\cite{MR4819960}, Lemma 2.8] An operator $T\in B(X)$ is $\RE$ iff there is a $r\in(0,1)$  which is $E$-large enough(in the sense of [\cite{MR4819960},Remark 2.7]) such that  the following two conditions hold
 \begin{enumerate}
     \item[(i)] $\sigma(T)\subseteq \overline E_r$.
     \item[(ii)]  For all $s\in(r,1)$, there exists a constant $c>0$ such that 
     \begin{equation*}
         \lVert R(z,T)\rVert\leq c \prod^N_{j=1}\lvert\xi_j-  z\rvert^{-1}, \qquad z\in D(0,2)\setminus \overline{E_s} .
     \end{equation*}
     
 \end{enumerate}\label{definition of RittE operator}
 \end{lemma}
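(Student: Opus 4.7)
The plan is to handle each direction separately, with the reverse implication being essentially trivial and the forward implication relying on a Neumann-series extension argument guided by the facet geometry of $\overline{E_s}$.

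For the easy direction, from (i) and (ii) to $T$ being $\RE$: since $E\subset\mathbb{T}$ and $D(0,r)\subset\overline{\mathbb{D}}$, the convex hull satisfies $\overline{E_r}\subset\overline{\mathbb{D}}$, so (i) gives $\sigma(T)\subset\overline{\mathbb{D}}$. For any $z\in\mathbb{C}$ with $1<|z|<2$ and any $s\in(r,1)$, one has $z\notin\overline{\mathbb{D}}\supset\overline{E_s}$, and (ii) specializes to precisely the defining resolvent bound of an $\RE$ operator.

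For the main direction, assume $T$ is $\RE$ with constant $c$. The crucial step is a quantitative geometric claim: for $s\in(0,1)$ sufficiently close to $1$ and every $z\in D(0,2)\setminus\overline{E_s}$, there exists a point $z_0$ with $1<|z_0|<2$ such that $|z-z_0|\leq \tfrac{1}{2c}\prod_{j=1}^N|\xi_j-z_0|$ and $\prod_{j=1}^N|\xi_j-z|\asymp \prod_{j=1}^N|\xi_j-z_0|$, with constants independent of $z$. The construction exploits the facet structure of $\partial E_s$: the boundary consists of arcs of $\partial D(0,s)$ together with line segments tangent to $\partial D(0,s)$ through each $\xi_j$, and one translates $z$ in the outward normal direction to the facet of $\partial E_s$ nearest to $z$ until reaching a point of modulus strictly greater than $1$. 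Given such a $z_0$, the $\RE$ hypothesis gives $\|R(z_0,T)\|\leq c\prod_j|\xi_j-z_0|^{-1}$, so the Neumann series
\[
R(z,T)=\sum_{n\geq 0}(z_0-z)^n\,R(z_0,T)^{n+1}
\]
converges because $|z-z_0|\cdot\|R(z_0,T)\|\leq 1/2$. This places $z$ in $\rho(T)$ and yields $\|R(z,T)\|\leq 2\|R(z_0,T)\|\leq C\prod_j|\xi_j-z|^{-1}$, which is (ii).

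To conclude, let $r$ be any threshold above which the geometric step succeeds; then (ii) holds for every $s\in(r,1)$, and the spectral inclusion (i) follows because $\sigma(T)\cap(D(0,2)\setminus\overline{E_s})=\emptyset$ for every such $s$, combined with $\bigcap_{s>r}\overline{E_s}=\overline{E_r}$ from Hausdorff convergence of the closed convex hulls as $s\downarrow r$. I expect the main obstacle to be the uniform geometric step: for $z$ very close to some $\xi_j$, a fixed radial translation distorts the factor $|\xi_j-z|$ in an uncontrolled way, so the translation direction must be carefully tailored to the facet of $\partial E_s$ nearest to $z$. Points far from every $\xi_j$ are handled by radial outward motion, while points near some $\xi_j$ must be moved along the outward normal to the tangent-line facet; the two regimes must be reconciled along the arcs where two facets meet tangentially. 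Once this uniform geometric claim is established, the Neumann propagation is routine.
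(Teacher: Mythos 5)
The paper itself does not prove this statement; Lemma \ref{definition of RittE operator} is quoted verbatim from \cite{MR4819960} (Lemma 2.8) and used as an imported fact, so there is no ``paper's own proof'' to compare against. I will therefore assess your argument on its own terms.

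Your overall plan is sound and is, as far as I know, the standard approach to results of this type: the implication from (i)--(ii) to the $\RE$ estimate is immediate once one notes $\overline{E_s}\supset\overline{E_r}$ and $\overline{E_s}\subset\overline{\mathbb{D}}$, and the converse is a Neumann-series extension of the resolvent from the annulus $\{1<|z|<2\}$ into $D(0,2)\setminus\overline{E_s}$, together with the observation that $\bigcap_{s>r}\overline{E_s}=\overline{E_r}$ yields (i). The Neumann step itself is correctly set up: once one produces $z_0$ with $1<|z_0|<2$, $|z-z_0|\cdot\|R(z_0,T)\|\le 1/2$ and $\prod_j|\xi_j-z_0|\asymp\prod_j|\xi_j-z|$, both $z\in\rho(T)$ and the desired bound on $\|R(z,T)\|$ follow.

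The genuine gap is the ``uniform geometric step,'' which you flag but do not close, and your proposed mechanism --- translating $z$ along the outward normal to the nearest facet of $\partial E_s$ --- is awkward precisely where it matters, namely near a vertex $\xi_j$ where two facets and the bounding arc meet; reconciling the normal-to-tangent-line regime with the radial regime is exactly the part that needs a quantitative estimate and you leave it at the level of a heuristic. A cleaner route, which removes the case analysis entirely, is to take $z_0=z/|z|^2$ for $z$ with $s<|z|<1$. The identity $\bigl|\xi_j-z/|z|^2\bigr|=|z|^{-1}|\xi_j-z|$ for $|\xi_j|=1$ gives $\prod_j|\xi_j-z_0|=|z|^{-N}\prod_j|\xi_j-z|$ \emph{exactly}, so the comparability requirement is automatic, $|z_0|=1/|z|\in(1,1/s)\subset(1,2)$, and $|z-z_0|=(1-|z|^2)/|z|$. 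The whole construction then reduces to the single inequality
\[
1-|z|^2\;\le\;C_E\,\sqrt{1-s^2}\,\prod_{j=1}^N|\xi_j-z|\qquad\text{for } z\in D(0,2)\setminus\overline{E_s},\ s<|z|<1,
\]
valid with a constant $C_E$ depending only on the configuration $E$, which for $s$ close enough to $1$ makes $|z-z_0|\cdot c\prod|\xi_j-z_0|^{-1}\le 1/2$. This inequality is the precise form of the geometric claim you gesture at; it does hold, and it is proved by splitting into $z$ within $\tfrac12\min_{j\ne k}|\xi_j-\xi_k|$ of some $\xi_{j_0}$ --- where the local tangent-cone of $E_s$ at $\xi_{j_0}$ has half-angle $\arcsin s$ and forces $1-|z|^2\lesssim\sqrt{1-s^2}\,|\xi_{j_0}-z|$ while the other factors are bounded below --- and $z$ separated from all $\xi_j$, where $1-|z|^2\le 1-s^2$ and $\prod_j|\xi_j-z|$ is bounded below. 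Until you write down this estimate (in your translation framework or in the inversion one) and verify it, the forward implication is not complete; everything else in your outline is correct and routine.
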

 We call an operator $T\in B(X)$ is $\RE$ operator of type $r,$ if it satisfies conditions (i) and (ii) of Lemma \ref{definition of RittE operator}.

   For any $s\in(0,1)$ , we denote $H^\infty_0(E_s)$ be the space of all bounded holomorphic functions defined on $E_s$, for which there exist positive reals $c,s_{1},s_{2},s_{3},...,s_{n}>0$ such that $$\lvert f(\lambda)\rvert\leq c          \prod^N_{j=1}\lvert\xi_j-\lambda\rvert^{s_i}, \qquad\lambda \in E_s.$$
   
   Assume that $T$ is a $\RE$ operator of type $r\in(0,1)$, let $s\in(r,1)$ for any $f\in H^{\infty}_{0}(E_s)$, we set 
      \begin{equation}
          f(T)=\frac{1}{2\pi i}\int_ {\partial E_u}f(\lambda)R(\lambda,T)d\lambda,\qquad u\in(r,s).
          \label{functional calculus of Ritt_E operator}
      \end{equation}
      Due to the decay of $f$, the integral is well-defined and absolutely convergent.

      It is easy to check that the map $ f \rightarrow f(T)$ is an algebra homomorphism.
  \begin{definition}(Bounded $H^{\infty}(E_s)$-functional calculus) Let $T$ be a $\RE$ operator type $r\in(0,1)$ and  $s\in(r,1)$ . We say that $T$ admits a bounded $H^{\infty}(E_s)$-functional calculus if there exists a constant $K\geq 1$ such that 
      \begin{equation*}
          \lVert f(T)\rVert\leq K\lVert f\rVert_{\infty,E_s},\qquad \text{for all}\hspace{0.2cm} f \in H^{\infty}_0(E_s).
          \end{equation*}
\end{definition}
We refer to \cite{MR4819960} for more details on $\RE$ operators. We now state  the following two lemmas, whose proof can be written with a similar proof as[\cite{MR4819960}, Lemma 4.2] that will be useful for later purposes.
\begin{lemma}\label{p independent of h}
    Let $A$ be a sectorial operator of type $\omega$ that admits a bounded $H^{\infty}(\Sigma_{\theta})$-functional calculus where $\theta\in(0,\frac{\pi}{2})$. Also consider $h\in H^{\infty}_0(\Sigma_\theta)$. Then, for any $\rho\in(0,1)$, there exists a $K>0$ such that $\lVert h(A_\rho)\rVert\leq K\lVert h\rVert_{\infty,\Sigma_{\theta}}$ where $A_\rho=(1-\rho)+\rho A$.
    \end{lemma}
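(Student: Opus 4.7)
The plan is to reduce the statement to the bounded $H^{\infty}(\Sigma_{\theta})$-calculus of $A$ itself, by introducing the shifted-scaled function
\[
g_\rho(z):=h\bigl((1-\rho)+\rho z\bigr)
\]
and showing $h(A_\rho)=g_\rho(A)$; the hypothesis on $A$ will then deliver the required uniform estimate.

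The first observation is that, because $\theta<\pi/2$ and $1-\rho\geq 0$, the affine map $z\mapsto(1-\rho)+\rho z$ sends $\Sigma_{\theta}$ into itself: adding a non-negative real to a point of the right-hand open sector only pulls its argument toward $0$. Consequently $g_\rho$ is bounded holomorphic on $\Sigma_{\theta}$ with
\[
\|g_\rho\|_{\infty,\Sigma_{\theta}}\leq\|h\|_{\infty,\Sigma_{\theta}}
\]
uniformly in $\rho\in(0,1)$. If the identity $h(A_\rho)=g_\rho(A)$ holds, then the bounded $H^{\infty}(\Sigma_{\theta})$-calculus of $A$ yields $\|h(A_\rho)\|=\|g_\rho(A)\|\leq C\|h\|_{\infty,\Sigma_{\theta}}$, where $C$ is the calculus constant of $A$, independent of $\rho$.

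To establish the identity, I start from the Dunford representation
\[
h(A_\rho)=\frac{1}{2\pi i}\int_{\Gamma}h(z)R(z,A_\rho)\,dz
\]
along a bounded contour $\Gamma\subset\Sigma_{\theta}$ enclosing $\sigma(A_\rho)=(1-\rho)+\rho\,\sigma(A)\subset\overline{\Sigma}_{\omega}$. Using the resolvent identity $R(z,A_\rho)=\rho^{-1}R\bigl(\rho^{-1}(z-(1-\rho)),A\bigr)$ and the change of variable $w=\rho^{-1}(z-(1-\rho))$, this becomes
\[
\frac{1}{2\pi i}\int_{\widetilde{\Gamma}}g_\rho(w)R(w,A)\,dw,
\]
where $\widetilde{\Gamma}$ is a bounded contour enclosing $\sigma(A)$ and lying in the shifted sector $\rho^{-1}(\Sigma_{\theta}-(1-\rho))\supset\Sigma_{\theta}$, which is the domain of holomorphy of $g_\rho$. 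By Cauchy's theorem, $\widetilde{\Gamma}$ can be deformed, without crossing $\sigma(A)$, to a standard contour inside $\Sigma_{\theta}$ surrounding $\sigma(A)$; the resulting integral is precisely the Dunford expression for $g_\rho(A)$.

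Finally, since $A$ is bounded and sectorial with bounded $H^{\infty}(\Sigma_{\theta})$-calculus, the Dunford value of $g_\rho(A)$ coincides with the value provided by the McIntosh extension of the calculus to $H^{\infty}(\Sigma_{\theta})$ (using the standard approximation $\phi_n g_\rho\in H^{\infty}_{0}(\Sigma_{\theta})$ with $\phi_n(A)\to I_X$ strongly, which is needed because $g_\rho(0)=h(1-\rho)$ need not vanish). The main obstacle, and the only genuinely delicate step, is verifying the contour deformation inside $\rho^{-1}(\Sigma_{\theta}-(1-\rho))\setminus\sigma(A)$ and the compatibility between the Dunford integral and the McIntosh-extended calculus; once these are in place, $K=C$ works with $C$ the calculus constant of $A$, proving the claim.
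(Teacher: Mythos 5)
Your overall strategy is the same as the paper's: compose with the affine map $\phi_\rho(z)=(1-\rho)+\rho z$, note that it sends $\Sigma_\theta$ into itself so that $\|h\circ\phi_\rho\|_{\infty,\Sigma_\theta}\leq\|h\|_{\infty,\Sigma_\theta}$, identify $h(A_\rho)$ with $(h\circ\phi_\rho)(A)$ by a change of variables and contour deformation, and then invoke the bounded $H^\infty(\Sigma_\theta)$-calculus of $A$. The contour deformation that you defer as ``the main obstacle'' is precisely what occupies the bulk of the paper's proof (the verification that the tail contribution $W$ vanishes).

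There is, however, a genuine gap in your final step. Set $g_\rho:=h\circ\phi_\rho$. Since $g_\rho(0)=h(1-\rho)$ need not vanish, $g_\rho\notin H^\infty_0(\Sigma_\theta)$, and the hypothesis on $A$ only delivers $\|f(A)\|\leq C\|f\|_{\infty}$ for $f\in H^\infty_0(\Sigma_\theta)$. You propose to bridge this by the McIntosh extension via a regularizing sequence $\phi_n$ with $\phi_n g_\rho\in H^\infty_0(\Sigma_\theta)$ and $\phi_n(A)\to I_X$ strongly. But to have $\phi_n g_\rho\in H^\infty_0$ one must take $\phi_n(0)=0$, and then $\phi_n(A)\to I_X$ fails on $\operatorname{Ker}(A)$; the operators $A=I-\overline{\xi}_iT$ to which this lemma is applied can certainly have nontrivial kernel (that is the whole point of the $\RE$ decomposition later in the paper). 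Thus the injectivity/dense-range hypotheses that the McIntosh extension requires are neither assumed in the lemma nor available in the applications, and the step does not go through as written. Consequently the claimed constant $K=C$ is also unjustified.

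The paper's fix is elementary and avoids the McIntosh machinery: it sets $\tilde h(z)=h(\phi_\rho(z))-\dfrac{h(1-\rho)}{1+z}$, which \emph{does} lie in $H^\infty_0(\Sigma_\theta)$ because $h\in H^\infty_0$ decays at $\infty$ and the subtracted term kills the value at $0$. The calculus applies directly to $\tilde h$, and the correction term is the single bounded operator $h(1-\rho)(I+A)^{-1}$, estimated by $|h(1-\rho)|\leq\|h\|_\infty$ together with $\|(I+A)^{-1}\|<\infty$ (note $-1\notin\overline{\Sigma}_\omega$). This yields a $\rho$-independent constant of the form $2C+\|(I+A)^{-1}\|$, which is all the lemma needs. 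If you replace your McIntosh invocation with this explicit $\tfrac{1}{1+z}$-regularization, your proof becomes essentially identical to the paper's.
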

    \begin{proof}
        Let $\tilde{h}(z)=h(\phi_\rho(z))-\frac{h(1-\rho)}{1+z},$ where $\phi_\rho(z):=(1-\rho)+\rho z.$ One can check that $\tilde{h}\in
H^{\infty}_{0}(\Sigma_\theta).$ Therefore,
$\tilde{h}(A)=\frac{1}{2\pi i}\int_{\partial\Sigma_{u}}\tilde{h}(z)R(z,A)$ is well-defined and $\lVert \tilde{h}(A) \rVert\lesssim\lVert \tilde{h} \rVert_{\infty,\Sigma_{\theta}}$, where $u\in(\omega,\theta)$. From \cite{MR2244037}, we have
$\tilde{h}(A)=\frac{1}{2\pi i}\int_{\partial{\Sigma_u}}\tilde{h}(z)R(z,A)dz=\frac{1}{2\pi i}\int_\Gamma \tilde{h}(z)R(z,A)dz$ where $\Gamma=\partial(\Sigma_u\cup B(0,\delta))$ for some small $\delta>0.$  Let us choose  $x>0$ such that $d(x,\sigma(A))>0$. Consider $\Gamma_1(t)=\delta e^{it}$ where $t\in[-u,u]$. Let $\gamma$ be the closed curve that joins the path $\Gamma_1$ and the path that passes through the points $\delta e^{iu}$, $x+ix\tan u$, $x-ix\tan u$ and $\delta e^{-iu}$ oriented counterclockwise. For a $R>0$, let us define  \begin{align*}W=\frac{1}{2\pi i}\lim\limits_{x\rightarrow\infty}\int^1_{-1} \tilde{h}\left(x+itx \tan u\right)R\left(x+itx\tan u,A\right)ix\tan u\hspace{0.1cm} dt\hspace{1.2cm}\\+\lim\limits_{R\rightarrow\infty}\frac{1}{2\pi i}\int^R_{\infty}\tilde{h}(te^{iu})R(te^{iu},A)dt+\lim\limits_{R\rightarrow\infty}\frac{1}{2\pi i}\int^{\infty}_{R}\tilde{h}(te^{-iu})R(te^{-iu},A)dt.\end{align*} Therefore, we can write
\begin{align*}
    \tilde{h}(A)&=\frac{1}{2\pi i}\int_{\gamma}\tilde{h}(z)R(z,A)dz +W.\\
    &=\frac{1}{2\pi i}\int_{\gamma}{h(\phi_\rho(z))}R(z,A)dz-\frac{1}{2\pi i}\int_{\gamma}\frac{h(1-\rho)}{1+z}R(z,A)dz+W\\
   &= h(A_\rho)-h(1-\rho)(I+A)^{-1}+W    
\end{align*}
In the above, we have used Dunford-Schwartz-Riesz functional calculus.
Therefore, for some $s>0$ we have
\begin{align*}
    &\left\lVert \int^{1}_{-1} \tilde{h}(x+itx \tan u)R(x+ itx\tan u,A)ix\tan u\hspace{0.1cm}dt\right\rVert\\
    &\hspace{1.5cm}\lesssim\int^{1}_{-1} \left\lVert \tilde{h}(x+itx \tan u)R(x+itx\tan u,A)ix\tan u\right\rVert dt
    \\&\hspace{1.5cm}\lesssim\frac{1}{{\lvert x\rvert}^s}\int^1_{-1} \frac{dt}{(1+t^2\tan^2 u)^{\frac{s+1}{2}}}.
 \end{align*}
Since $s>0$, taking limit $x\rightarrow{\infty}$ both sides we will get that
\begin{equation*}
\lim_{x\rightarrow\infty}\int^{1}_{-1}\tilde{h}(x+itx \tan u)R(x+ itx\tan u,A)ix\tan udt=0.
\end{equation*}
One can see that
\begin{align*}
    \lim\limits_{R\rightarrow\infty}\left\lVert \int^{R}_{\infty}\tilde{h}(te^{iu})R(te^{iu},A)dt\right\rVert\lesssim\lim\limits_{R\rightarrow\infty}\int^{\infty}_{R}\frac{dt}{\lvert t\rvert^{s+1}}\to 0.
\end{align*}
Therefore, $$\lim_{R\rightarrow\infty}\int^{R}_{\infty}\tilde{h}(te^{iu})R(te^{iu},A)dt=0.$$
Similarly, one can see that
\begin{equation*}
 \lim\limits_{R\rightarrow\infty}\int^{\infty}_{R}\tilde{h}(te^{-iu})R(te^{-iu}, A)dt=0.  
\end{equation*}
Therefore, we deduce $W=0$.
Therefore, $ \tilde{h}(A)=h(A_\rho)-h(1-\rho)(I+A)^{-1}.$
 As $A$ admits a bouned functional calculus, there exists $K_1>0$ such that \begin{equation}
    \lVert h(A_{\rho})\rVert\leq K_1\lVert \tilde{h}\rVert_{\infty,\Sigma_{\theta}}+\lvert h(1-\rho)\rvert,\qquad \text{for all} \hspace{0.1cm}\rho \in(0,1).
    \end{equation}
%Observe that \begin{equation}
    %\lVert\tilde{h}\rVert_{\infty,\Sigma_{\theta}\times\Sigma_{\theta}}=\lVert h\rVert_{\infty,\Sigma_{\theta}\times\Sigma_{\theta}}.
 %   \end{equation}
From the definition of $\tilde{h}$, we can see that $\lVert\tilde{h}\rVert_{\infty,\Sigma_{\theta}}=\lVert h\rVert_{\infty,\Sigma_{\theta}}.$
Since $(1-\rho) \in \Sigma_{\theta}$, for some $K>0$ we have $\lVert h(A_{\rho})\rVert\leq K\lVert h\rVert_{\infty,\Sigma_{\theta}}$ for all $\rho\in(0,1).$
\end{proof}
\begin{lemma} Let $\mathbf{A_\rho}=(A^\rho_1,A^\rho_2)=\left((1-\rho)I+\rho A_1, (1-\rho)I+\rho A_2\right)$ be a commuting pair of operators such that each $A_i$ is a sectorial operator of type $\omega\in(0,\frac{\pi}{2})$ where $i=1,2$. Also, assume that each $A_i,i=1,2$ has bounded $H^{\infty}(\Sigma_{\theta})-$ functional calculus and $(A_1,A_2)$ has a bounded $H^{\infty}(\Sigma_{\theta}\times \Sigma_{\theta})$-functional calculus.
 Then, there exists a constant $K>0$ such that $\lVert h(\mathbf{A_\rho})\rVert\leq K \lVert h \rVert_{\infty,  \Sigma_{\theta}\times\Sigma_{\theta}}$  for all $\rho\in(0,1)$ and $h\in H^{\infty}_0(\Sigma_{\theta}\times\Sigma_{\theta}).$\label{double h of A-rho}
\end{lemma}
\begin{proof}
    Let $\phi_{\rho}(z_1,z_2)=((1-\rho)+\rho z_1,(1-\rho)+\rho z_2).$
    Let us consider the function \begin{equation}
        \tilde{h}(z_1,z_2)=h(\phi_\rho(z_1,z_2))-\frac{h(\phi_{\rho}(z_1,0)}{1+z_2}-\frac{h\left(\phi(0,z_2)\right)}{1+z_1}-\frac{h(1-\rho,1-\rho)}{(1+z_1)(1+z_2)}.\label{h-tilde}
    \end{equation}
Note that $\tilde{h}\in H^{\infty}_0(\Sigma_\theta\times\Sigma_\theta).$ Let $d(x,A_i)>0,i=1,2$ and the path $\gamma$ be defined as in Lemma \ref{p independent of h}.
Let $\tilde{\gamma}$ is a path starting from $+\infty$ then going to $x+ix\tan u$ and $x-ix\tan u$ at last ended at $-\infty$.

Consider the following
\begin{align*}
&m_1=\frac{1}{(2\pi i)^2}\int_{\gamma}\int_{\gamma}\tilde{h}(z_1,z_2)R(z_1,A_1)R(z_2,A_2)dz_1dz_2\\
&m_2=\frac{1}{(2\pi i)^2}\int_{\gamma}\int_{\tilde{\gamma}}\tilde{h}(z_1,z_2)R(z_1,A_1)R(z_2,A_2)dz_1dz_2\\
&m_3=\frac{1}{(2\pi i)^2}\int_{\tilde{\gamma}}\int_{\gamma}\tilde{h}(z_1,z_2)R(z_1,A_1)R(z_2,A_2)dz_1dz_2\\
&m_4=\frac{1}{(2\pi i)^2}\int_{\tilde{\gamma}}\int_{\tilde{\gamma}}\tilde{h}(z_1,z_2)R(z_1,A_1)R(z_2,A_2)dz_1dz_2.
\end{align*}
% \begin{align*}
% &\left\lVert\frac{1}{(2\pi i)^2}\int_{\gamma}\int_{\tilde{\gamma}}\tilde{h}(z_1,z_2)R(z_1,A_1)R(z_2,A_2)dz_1dz_2\right\rVert\\&\hspace{2 cm} \lesssim\int_{\gamma}\int_{\tilde{\gamma}}\left\lVert\tilde{h}(z_1,z_2)R(z_1,A_1)R(z_2,A_2)dz_1dz_2\right\rVert\\
% &\hspace{2 cm}\lesssim\int_{\gamma}\int_{\tilde{\gamma}}\frac{dz_1dz_2}{\lvert z_1\rvert^{s+1}\lvert z_2\rvert^{s+1}
% \end{align*}
Therefore, 
 \begin{align*}
\tilde{h}(A_1,A_2)&=\frac{1}{(2\pi i)^2}\int_{\partial\Sigma_{\theta}}\int_{\partial\Sigma_{\theta}}\tilde{h}(z_1,z_2)R(z_1,A_1)R(z_2,A_2)dz_1dz_2\\&=\lim_{x\rightarrow
\infty} (m_1+m_2+m_3+m_4).
\end{align*}
Let us consider \begin{equation*}
    M_1(x)=\int_{\gamma}\frac{d\lvert z_1\rvert}{\lvert z_1\rvert^{s+1}}, M_2(x)=\int_{\tilde{\gamma}}\frac{d\lvert z_2\rvert}{\lvert z_2\rvert^{s+1}}.
\end{equation*}
Notice that we have \begin{align*}
     \int_{\gamma}\frac{d\lvert z_1\rvert}{\lvert z_1\rvert^{s+1}}&=\int^{u}_{-u} \frac{\delta dt}{\delta^{s+1}} +2\int^{\frac{x}{cosu}}_{\delta}\frac{dt}{t^{s+1}}+\int^{1}_{-1}\frac{\lvert x\tan u\rvert dt}{\lvert x+ixt\tan u\rvert^{s+1}}
 \end{align*}
 One can check that \begin{align*}
     \lim_{x\rightarrow\infty}\int_{\gamma}\frac{d\lvert z_1\rvert}{\lvert z_1\rvert^{s+1}}= m, \text{where}\hspace{0.1cm} m>0.
     \end{align*}
Moreover, we see that
\begin{align*}
    \lim\limits_{x\rightarrow\infty}M_2(x)&=\lim\limits_{x\rightarrow\infty}\int^1_{-1}\frac{\lvert x\tan u\rvert dt_2}{\lvert x+ix t_2 \tan u\rvert^{s+1}}+\lim\limits_{R\rightarrow\infty}\int^{R}_{\infty}\frac{dt_2}{\lvert t_2e^{iu}\rvert^{s+1}}+\lim\limits_{R\rightarrow\infty}\int^{\infty}_{R}\frac{dt_2}{\lvert t_2e^{-iu}\rvert^{s+1}}\\
    &=0.
\end{align*}
Next observe that
   \begin{align*} &\lim_{x\rightarrow\infty}\left\lVert\frac{1}{(2\pi i)^2}\int_{\gamma}\int_{\tilde{\gamma}}\tilde{h}(z_1,z_2)R(z_1,A_1)R(z_2,A_2)dz_1dz_2\right\rVert\\&\lesssim\lim_{x\rightarrow\infty}\int_{\gamma}\int_{\tilde{\gamma}}\left\lVert\tilde{h}(z_1,z_2)R(z_1,A_1)R(z_2,A_2)\right\rVert d\lvert z_1 \rvert d\lvert z_2\rvert
    \\&\lesssim \lim_{x\rightarrow\infty}\int_{\gamma}\frac{d\lvert z_1\rvert}{\lvert z_1\rvert^{s+1}}\int_{\tilde{\gamma}}\frac{d\lvert z_2\rvert}{\lvert z_2\rvert^{s+1}}
    \\&\lesssim\lim_{x\rightarrow\infty} M_1(x)\lim_{x\rightarrow\infty}  M_2(x)=0.
 \end{align*}
 Therefore, $\lim\limits_{x\rightarrow\infty}m_2=0.$
 Similarly one can check that $\lim\limits_{x\rightarrow\infty}m_3=0.$ Let us consider $v_j=x + i\hspace{0.1cm}t_j x\tan{u},j= 1,2.$
 % Suppose $u\in(\omega,\theta)$ and $(\mu_1(t_1),\mu_2(t_2))=(x+2it_1x\tan u,x+2it_2x\tan u)$, where $x>0$ and $t_1,t_2\in(-1,1)$ such that $d(x,\sigma(A_i))>0$ for $i=1,2$. 
Let us define the following
 \begin{align*}
&w_1=\lim\limits_{R\rightarrow\infty}\frac{1}{2\pi i}\int^R_{\infty}\int^R_{\infty}\tilde{h}(t_1e^{iu},t_2e^{iu})R(t_1e^{iu},A_1)R(t_2e^{iu},A_2) e^{i2u}dt_1dt_2\\
&w_2=\lim\limits_{R\rightarrow\infty}\frac{1}{2\pi i}\int^{\infty}_{R}\int^{\infty}_R\tilde{h}(t_1e^{-iu},t_2e^{-iu})R(t_1e^{-iu},A_1)R(t_2e^{-iu},A_2)e^{-i2u} dt_1dt_2\\ 
&w_3=\lim_{x\rightarrow\infty}\frac{1}{(2\pi i)^2}\int^{1}_{-1}\int^{1}_{-1}\tilde{h}\left(v_1,v_2\right)R\left(v_1,A_1\right)R\left(v_2,A_2\right)(-x^2 \tan^2 u) dt_1dt_2.
\end{align*}
 Note that \begin{align*}
     &\left\lVert\int^{1}_{-1} \int^{1}_{-1} \tilde{h}\left(v_1,v_2\right)R(v_1,A_1)R(v_2,A_2)4x\tan u\hspace{0.1cm}dt_1 dt_2\right\rVert\\&\hspace{1 cm}\lesssim\int^1_{-1}\int^1_{-1}\frac{ x^2\tan^2 u \hspace{0.1cm}dt_1dt_2}{(x^2+4t^2_1x^2\tan^2 u)^\frac{s+1}{2}(x^2+4t^2_2x^2\tan^2 u)^\frac{s+1}{2}}
     \\&\hspace{1cm}\lesssim \frac{1}{\lvert x \rvert^{2s}}.
\end{align*}
 Taking $x\rightarrow\infty$, we deduce that
 $w_3=0.$
 One can check that $w_1=0$ and $w_2=0$.
 Therefore, $m_4=w_1+w_2+w_3=0.$
So, we can write
 \begin{align*}
 \tilde{h}(A_1,A_2)&=\frac{1}{(2\pi i)^2}\int_{\partial\Sigma_{\theta}}\int_{\partial\Sigma_{\theta}}\tilde{h}(z_1,z_2)R(z_1,A_1)R(z_2,A_2)dz_1dz_2\\&=\frac{1}{(2\pi i)^2}\lim\limits_{x\rightarrow\infty}\int_{\gamma}\int_{\gamma}\tilde{h}(z_1,z_2)R(z_1,A_1)R(z_2,A_2)dz_1dz_2.
\end{align*}
Putting the value of $\tilde{h}$ from the equation \ref{h-tilde}, we have 
\begin{align}
 h(\mathbf{A_\rho})=\tilde{h}(A_1,A_2)+h\left(\phi_{\rho}(A_1,0)\right)(1+A_2)^{-1}+h\left(\phi_{\rho}(0,A_2)\right)(I+A_1)^{-1}\label{A-p for h}\end{align}\begin{align*}\hspace{2.5cm}+h\left((1-\rho),(1-\rho)\right)(1+A_1)^{-1}(1+A_2)^{-1}.
\end{align*}
 From Lemma \ref{p independent of h}, there exist $K_1,K_2>0$ such that 
\begin{equation}
     \lVert h(\phi_{\rho}(A_1,0))\rVert\leq K_1\lVert h \rVert_{\infty,\Sigma_{\theta}\times\Sigma_{\theta}}\label{A-1p}
 \end{equation}
\begin{equation}
     \lVert h\left(\phi_{\rho}(0,A_2)\right)\rVert\leq K_2\lVert h \rVert_{\infty,\Sigma_{\theta}\times\Sigma_{\theta}}.\label{A-2p}
 \end{equation}
  Combining inequalities in \ref{A-1p} and \ref{A-2p} with the equation \ref{A-p for h},  for any $\rho\in(0,1)$ we have
 \begin{equation*}
    \lVert\tilde{h}(\mathbf{A_\rho})\rVert\leq C_1\lVert h \rVert_{\infty,\Sigma_{\theta}\times\Sigma_{\theta}}+ C_2\lvert h(1-\rho,1-\rho)\rvert,\hspace{0.1 cm} \text{for some} \hspace{0.1cm} C_1,C_2>0.   
 \end{equation*}
 Since $1-\rho\in\Sigma_{\theta}$, it follows that $\lVert h(\mathbf{A_\rho})\rvert\leq K \lVert h \rVert_{\infty,\Sigma_{\theta}\times\Sigma_{\theta}}$ for some $K>0$ which is independent of $\rho$.
\end{proof}
 \begin{definition} Let $1<p\neq 2<\infty.$ Suppose $(T_1,T_2)$ be a commuting pair of bounded operators on $L^p(\Omega)$. We say that  $(T_1,T_2)$ admits a joint isometric loose
dilation, if there exists a measure space $\Omega^\prime,$ a commuting tuple of onto isometries $\mathbf{U}=(U_1,U_2),$ on $L^p(\Omega^\prime),$ 

together with two bounded operators $\mathcal{Q}
:L^p(\Omega^\prime)\to L^p(\Omega)$ and $\mathcal{J}:L^p(\Omega)\to L^p(\Omega^\prime),$ such that $$T_1^{i_1}T_2^{i_2}=\mathcal{Q}U_1^{i_1}U_2^{i_2}\mathcal{J}$$ for all $i_1,i_2\in\mathbb{N}_0.$ 

\end{definition}
The concept of loose dilation leads to the notion of $p$-polynomially boundedness and $p$-completely polynomially boundedness (see \cite{MR3265289}). We need the following definitions.

Let us define the shift operators on $\ell^p(\mathbb{Z}\times\mathbb{Z})$ as 
\[
              (S_1(a))_{i_1,i_2}:=a_{i_1-1,i_2},\ a\in\ell^p(\mathbb{Z}\times\mathbb{Z}).
              \]
    \[
              (S_2(a))_{i_1,i_2}:=a_{i_1,i_2-1},\ a\in\ell^p(\mathbb{Z}\times\mathbb{Z}).
              \]

\begin{definition}[Jointly $p$-polynomially bounded] Let $1<p<\infty.$ Let $(T_1,T_2)$ be a commuting tuple of bounded operators on $X$. We say that $(T_1,T_2)$ is jointly 
$p$-polynomially bounded, if there exists a $K>0$, such that
\begin{equation*}
\|\phi(T_1,T_2)\|_{X\to X}\leq K\|\phi(\mathcal{S})\|_{\ell^p(\mathbb{Z}\times\mathbb{Z})\to\ell^p(\mathbb{Z}\times\mathbb{Z})},
\end{equation*}
for any polynomial $\phi\in\mathbb{C}[z_1,z_2],$ where  $\mathcal{S}:=(S_1,S_2)$ is the commuting tuple of left shift operators on $\ell^p(\mathbb{Z}\times\mathbb{Z}).$
\end{definition}
\begin{definition}[Jointly $p$-completely polynomially bounded]Let $1<p<\infty$ and  $(T_1,T_2)$ be a commuting tuple of bounded operators on $X.$ We say that $(T_1,T_2)$ is jointly $p$-completely polynomially bounded, there exists a constant $C>0,$ such that \[\big\|(\phi_{i,j}(T_1,T_2))\big\|_{L^p([2],X)\to L^p([2],X)}\leq C\big\|(\phi_{i,j}(\mathcal{S}))\big\|_{L^p([2],\ell^p(\mathbb{Z}\times\mathbb{Z}))\to L^p([2],\ell^p(\mathbb{Z}\times\mathbb{Z})},\]
where $\phi_{i,j}\in\mathbb{C}[z_1,z_2]$.
\end{definition}
\section{Joint functional calculus of \texorpdfstring{$\RE$}{RE} Operators.} \label{jointritte}       
   We can generalize the similar idea to define bounded $H^{\infty}$-functional calculus for the commuting family of polygonal and $\RE$ operators.
It suffices to study for a pair of operators; the general case follows similarly.
    Let $E\subseteq\mathbb{T}$ as defined in \ref{transferprin}, for $s_1,s_2 \in (0,1)$ we  define  the space $H^{\infty}_0(E_{s_1}\times E_{s_2})$ as the set of all bounded holomorphic function define on $E_{s_1}\times E_{s_2}$, for which there exists positive reals $c,s_1,s_2,...,s_n,s'_1,s'_2,....,s'_n>0$ such that 
    \begin{equation}
   \lvert f(\lambda_1,\lambda_2)\rvert\leq c\prod^N_{j=1}\lvert\xi_j-\lambda_1\rvert^{s_i}\prod^N_{j=1}\lvert\xi_j-\lambda_2\rvert^{s'_i}, \qquad (\lambda_1,\lambda_2)\in E_s\times E_s.
    \end{equation}
     Let $(T_1,T_2)$ be a commuting tuple of operators such that each $T_i$ is $\RE$ operator of type $r_i$ for $i=1,2$.
    Let $f \in H^{\infty}_0(E_{s_1}\times E_{s_2})$ and $u_i\in(r_i,s_i)$ for $i=1,2$, define 
    \begin{equation}
         f(T_1,T_2):= \int_{\partial E_{u_1}}\int_{\partial E_{u_2}}f(\lambda_1,\lambda_2)R(\lambda_1,T_1)R(\lambda_2,T_2)d\lambda_1d\lambda_2\qquad
          \text{for all $\lambda_1,\lambda_2\in E_s$}. 
          \label{double ritt_E operator}
     \end{equation}
    It is easy to check that $f(T_1,T_2)$ is well-defined and the integral in \ref{double ritt_E operator} is absolutely convergent.
    
   Let $H^{\infty}_{0,1}(E_{s_1}\times E_{s_2})=H^{\infty}_{0}(E_{s_1})\bigoplus H^{\infty}_0(E_{s_2})\bigoplus H^{\infty}_0(E_{s_1}\times E_{s_2})$.
    Hence $f\in H^{\infty}_{0,1}(E_{s_1}\times E_{s_2})$ , we can write $f(z_1,z_2)=f_1(z_1)+f_2(z_2)+f_{12}(z_1,z_2)$ for all $(z_1,z_2)\in E_{s_1}\times E_{s_2}$, where $f_1\in H^{\infty}_{0}(E_{s_1}),f_2\in H^{\infty}_{0}(E_{s_2})$ and $f_{12}\in H^{\infty}_0(E_{s_1}\times E_{s_2})$.

  Consider $f(T_1,T_2)=f_1(T_1)+f_2(T_2)+f_{12}   (T_1,T_2)$ where $f_1(T_1),f_2(T_2)$ is determined as equation \ref{functional calculus of Ritt_E operator} and $f_{12}(T_1,T_2)$ is determined as equation \ref{double ritt_E operator}.
\begin{definition}
       Let $s_i\in(r_i,1)$, $i=1,2$. We say that $(T_1,T_2)$ admits a bounded $H^{\infty}(E_{s_1}\times E_{s_2})$-functional calculus if
  \begin{equation*}
      \lVert f(T_1,T_2)\rVert\lesssim\lVert f\rVert_{\infty,E_{s_1}\times E_{s_2}}, \qquad \forall  f\in H^{\infty}_{0,1}(E_{s_1}\times E_{s_2}). 
  \end{equation*}
  \end{definition}
\begin{definition}[Joint Polygonally bounded operator] A  commuting tuple $(T_1,T_2)$ on a Banach space $X$ is said to be joint polygonally bounded if there exists a convex, open polygon $\Delta\subseteq\mathbb{D}$ and $\sigma(T_1),\sigma(T_2)\subseteq \overline\Delta$  such that $\lVert\phi(T_1, T_2)\rVert\lesssim \lVert\phi\rVert_{\infty,\Delta\times\Delta}$ for all $\phi\in\mathbb{C}[z_1,z_2]$.  
\end{definition}
\begin{proposition}\label{polygonally bounded}
      Let $(T_1,T_2)$ be the commuting pair $\RE$ operators of type $r$. Then $(T_1,T_2)$ admits a bounded $H^{\infty}(E_{s}\times E_{s})$-functional calculus iff 
    \begin{align*}
        \lVert \phi(T_1,T_2)\rVert\lesssim\lVert\phi\rVert_{\infty,E_s\times E_s} , \qquad \forall \phi\in\mathbb{C}[z_1,z_2].
    \end{align*}
\end{proposition}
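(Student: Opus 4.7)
The proof is the bivariate extension of the single-variable result for $\RE$ operators in \cite{MR4819960} and hinges on bidirectional approximation between polynomials in $\mathbb{C}[z_1,z_2]$ and functions in $H^\infty_0(E_s\times E_s)$. The chief difficulty is that polynomials do not lie in $H^\infty_0$, since they lack the required decay at the vertices $\xi_j$; we therefore have to bridge the two function classes through a regularization-and-limit argument on each side.

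For the forward implication, given a polynomial $\phi\in\mathbb{C}[z_1,z_2]$, I would introduce the regularizer $g_\epsilon(z):=\prod_{j=1}^{N}(1-\overline{\xi_j}z)^{\epsilon}$, defined on $E_s$ by the principal branch. For each $\epsilon>0$, the product $(z_1,z_2)\mapsto g_\epsilon(z_1)g_\epsilon(z_2)\phi(z_1,z_2)$ lies in $H^\infty_0(E_s\times E_s)$ with sup-norm at most $2^{2N\epsilon}\lVert\phi\rVert_{\infty,E_s\times E_s}$. The hypothesized bounded $H^{\infty}$-calculus then yields $\lVert g_\epsilon(T_1)g_\epsilon(T_2)\phi(T_1,T_2)\rVert\lesssim \lVert\phi\rVert_{\infty,E_s\times E_s}$ uniformly in $\epsilon$. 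The remaining task is to show $g_\epsilon(T_i)\to I_X$ strongly as $\epsilon\to 0^+$, which I would establish by a convergence lemma analogous to the sectorial one, applied to the uniformly bounded family $(g_\epsilon)$ converging pointwise to $1$ on $E_s\setminus E$.

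For the converse, given $f\in H^\infty_{0,1}(E_s\times E_s)$, I would decompose $f=f_1+f_2+f_{12}$; the pure single-variable summands reduce to the one-variable version of the proposition (which is implied by the joint bound applied to polynomials depending on a single $z_i$), so attention focuses on $f_{12}\in H^\infty_0(E_s\times E_s)$. For $\rho\in(0,1)$ set $f_\rho(z_1,z_2):=f_{12}(\rho z_1,\rho z_2)$. Since $E_s$ is star-shaped at $0$, $f_\rho$ extends holomorphically to $\rho^{-1}E_s\times\rho^{-1}E_s$, a neighborhood of $\overline{E_s\times E_s}$, and $\lVert f_\rho\rVert_{\infty,E_s\times E_s}\leq \lVert f_{12}\rVert_{\infty,E_s\times E_s}$. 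By Runge's theorem, $f_\rho$ is uniformly approximable on $\overline{E_s\times E_s}$ by polynomials $p_{\rho,n}$ with $\lVert p_{\rho,n}\rVert_{\infty,E_s\times E_s}\to \lVert f_\rho\rVert_{\infty}$. The polynomial bound gives $\lVert p_{\rho,n}(T_1,T_2)\rVert\lesssim \lVert p_{\rho,n}\rVert_{\infty}$, while a Dunford-type contour integral on $\partial(\rho^{-1}E_{u_1})\times\partial(\rho^{-1}E_{u_2})$---where $f_\rho$ is holomorphic and the resolvents are bounded---shows that $p_{\rho,n}(T_1,T_2)\to f_\rho(T_1,T_2)$ in norm. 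Letting $\rho\to 1^{-}$ and applying dominated convergence to the canonical integral formula \eqref{double ritt_E operator} for $f_{12}$ then yields $f_\rho(T_1,T_2)\to f_{12}(T_1,T_2)$, completing the estimate.

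The main obstacle is the convergence lemma for the $H^{\infty}$-calculus of $\RE$ operators, needed both to conclude $g_\epsilon(T_i)\to I_X$ in the forward direction and to control the contour integrals in the converse. It is not among the results recalled in Section~\ref{prelim} and must be proved by adapting the sectorial argument: one estimates the integrand $(g_\epsilon-1)R(\cdot,T_i)$, respectively $(f_\rho-f_{12})R(\cdot,T_1)R(\cdot,T_2)$, near each vertex $\xi_j$, where the resolvent singularity $\prod|\xi_k-\lambda|^{-1}$ must be balanced against the decay supplied by either the regularizer or the $\rho$-dilation.
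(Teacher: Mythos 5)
Your converse direction matches the paper's argument in substance: the paper likewise scales, approximates by polynomials on the compact set $t'\overline{E_s}\times t'\overline{E_s}$ (invoking a polynomial-approximation lemma rather than Runge, but to the same effect), passes the polynomial bound through Dunford--Riesz calculus, and lets $t\to 1^-$ with dominated convergence applied to the contour formula. That half of your plan is sound and essentially identical to what the paper does.

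Your forward direction, however, diverges from the paper and has a genuine gap. The claimed convergence lemma ``$g_\epsilon(T_i)\to I_X$ strongly'' is false as stated: for every $\epsilon>0$ and every $k$ one has $g_\epsilon(\xi_k)=\prod_{j}(1-\overline{\xi_j}\xi_k)^\epsilon=0$, so $g_\epsilon(T_i)$ annihilates $\mathrm{Ker}(I_X-\overline{\xi_k}T_i)$ for each $k$ and each $\epsilon$. Hence the best one could hope for is $g_\epsilon(T_i)\to P_i$, the projection onto $\overline{\mathrm{Ran}\,\prod_j(I_X-\overline{\xi_j}T_i)}$ --- and even that requires the mean-ergodic decomposition $X=\bigoplus_j\mathrm{Ker}(I_X-\overline{\xi_j}T_i)\oplus\overline{\mathrm{Ran}\,\prod_j(I_X-\overline{\xi_j}T_i)}$, which the paper only establishes under reflexivity (see the version of the mean ergodic theorem and the decomposition lemma in Section~\ref{alldithm}). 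Proposition~\ref{polygonally bounded} is stated for a general Banach space, so the regularize-and-limit route does not close. The paper instead sidesteps the convergence issue entirely by an algebraic decomposition: write $\phi(z_1,z_2)=\sum_n a_n(z_1)b_n(z_2)$, split $a_n=a_n^0+a_n^1$, $b_n=b_n^0+b_n^1$ with $a_n^0=\sum_j a_n(\xi_j)L_j$ the Lagrange interpolant at $E$, so that $a_n^1,b_n^1$ vanish on $E$. The $a^0b^0$ piece is a fixed finite combination of $L_j(T_1)L_k(T_2)$ weighted by $\phi(\xi_j,\xi_k)$; the cross pieces $a^0b^1$ and $a^1b^0$ reduce to one-variable $H^\infty_0(E_s)$ bounds; and $a^1b^1\in H^\infty_0(E_s\times E_s)$ so the joint calculus applies. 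The Lagrange interpolants play the role of your would-be kernel projections but require neither reflexivity nor any limiting argument. To repair your version you would need to assume reflexivity, peel off the kernels via the decomposition, and apply the regularization only on the range part --- at which point you have essentially re-derived the role of the $L_j$ by a longer route.
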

\begin{proof}
Let $\phi \in\mathbb{C}[z_1,z_2].$ We can write $\phi(z_1,z_2)=\sum\limits^{d}_{n=1}a_n(z_1)b_n(z_2)$.
In addition, we can write $a_n(z_1)=a^0_n(z_1)+a^1_n(z_1)$ and $ b_n(z_2)=b^0_n(z_2)+b^1_n(z_2)$,
such that $a^0_n=\sum\limits^N_{j=1}a_n(\xi_j)L_j$, $b^0_n=\sum\limits^N_{j=1}b_n(\xi_j)L_j$ and $a^1_n(z_1)=a_n(z_1)-a^0_n(z_1)$, $b^1_n(z_2)=b_n(z_2)-b^0_n(z_2)$  where $L
_1,L_2,...,L_N \in \mathbb{C}[z]$ satisfies $L_i(\xi_j)=\delta_{ij}$ for all $1\leq i,j\leq N $.
 The polynomial $\phi$ can be written as 
 \begin{equation}
     \phi(z_1,z_2)=\sum\limits^d_{n=1}(a^0_n(z_1)b^0_n(z_2)+a^0_n(z_1)b^1_n(z_2)+a^1_n(z_1)b^0_n(z_2)+a^1_n(z_1)b^1_n(z_2))
\end{equation}
Therefore, we obtain the following estimate.
     \begin{align*}        
       \Big\|\sum \limits^d_{n=1}a^0_n(T_1)b^0_n(T_2)\Big\|&=\Big\lVert\sum\limits^{d}_{n=1}\{\sum\limits^N_{j=1}a_n(\xi_j)L_j(T_1))(\sum\limits^N_{k=1}b_n(\xi_j)L_k(T_2)\}\Big\rVert\\
         &\leq\sum\limits^N_{j,k=1}\Big\lVert L_jL_k \Big\rVert\Big\lVert \sum^d_{n=1}a_n(\xi_j)b_n(\xi_k)\Big\rVert \\
&\lesssim\sum\limits^N_{j,k=1}\lVert\phi(\xi_j,\xi_k)\rVert\lesssim\lVert\phi\rVert_{\infty,E_s\times E_s}.
        \end{align*}
        Moreover, we also have the following
    \begin{align*}
       \Big\lVert \sum^d_{n=1}a^0_n(T_1)b^1_n(T_2)\Big\rVert
       &\lesssim\sum^N_{j=1}\Big\lVert\sum\limits^d_{n=1} a_n(\xi_j) b^1_n(T_2)\Big\rVert\\ &\lesssim\sum\limits^N_{j=1}\Big\lVert\sum^d_{n=1}( a_n(\xi_j)b^1_n)\Big\rVert_{\infty,E_s}\\&\lesssim \sum^N_{j=1}\lVert\phi(\xi_j,.)\rVert_{\infty,E_s}\\
       &\lesssim \lVert\phi\rVert_{\infty,E_s\times E_s}. 
    \end{align*}
    Similarly one can be able to show that \[\Big\lVert \sum\limits^d_{n=1}a^1_n(T_1)b^0_n(T_2)\Big\rVert\lesssim\lVert \phi \rVert_{\infty,E_s\times E_s}.\] Observe that
$\sum\limits^d_{n=1}a^1_n(\xi_i)b^1_n(\xi_j)=0.$ Therefore, $\sum\limits^d_{n=1}a^1_n(z_1)b^1_n(z_2)\in H^{\infty}_0(E_s\times E_s).$
Let us define the following polynomials
\begin{align*}
&P_1(z_1,z_2)=\sum\limits^d_{n=1}a^0_n(z_1)b^0_n(z_2)\\
&P_2(z_1,z_2)=\sum\limits^d_{n=1}a^0_n(z_1)b^1_n(z_2)\\
&P_3(z_1,z_2)=\sum\limits^d_{n=1}a^1_n(z_1)b^0_n(z_2)\\
&P_4(z_1,z_2)=\sum\limits^d_{n=1}a^1_n(z_1)b^1_n(z_2).
\end{align*}
Therefore, 
\begin{align}\label{equation-P-4}
    \lVert P_4 \rVert_{\infty,E_s\times E_s}\leq \lVert P_1\rVert_{{\infty,E_s\times E_s}}+\lVert P_2\rVert_{\infty,E_s\times E_s}+\lVert P_3\rVert_{{\infty,E_s\times E_s}}+\lVert \phi\rVert_{\infty,E_s\times E_s }.
\end{align}
Further, we have
\begin{align*}
\lVert P_1\rVert_{\infty,E_s\times E_s}\lesssim\sum\limits^N_{j,k=1}\Big\lVert L_jL_k \Big\rVert\Big\lVert \sum^d_{n=1}a_n(\xi_j)b_n(\xi_k)\Big\rVert
\lesssim\sum^N_{j,k=1}\lvert \phi(\xi_j,\xi_k)\rvert\lesssim\lVert\phi\rVert_{\infty,E_s\times E_s}.
\end{align*}
Also, we deduce that
\begin{align*}
    \lVert P_2\rVert_{\infty,E_s\times E_s}&\lesssim\sum\limits^N_{j=1}\Big\lVert\sum^d_{n=1}( a_n(\xi_j)b^1_n)\Big\rVert_{\infty,E_s}\\&\lesssim \sum^N_{j=1}\lVert\phi(\xi_j,.)\rVert_{\infty,E_s}\\
       &\lesssim \lVert\phi\rVert_{\infty,E_s\times E_s}.
\end{align*}
Similarly, the following estimate hold 
\begin{align*}
    \lVert P_3 \rVert_{\infty, E_s\times E_s}\lesssim\lVert\phi\rVert_{\infty,E_s\times E_s}
\end{align*}
Therefore, Using equation\ref{equation-P-4}, we obtain that 
$\lVert P_4\rVert_{\infty,E_s\times E_s}\lesssim\lVert \phi\rVert_{\infty,E_s\times E_s}$
By our assumption, we have
\begin{align*}
    \lVert P_4(T_1,T_2)\rVert\lesssim\lVert P_4\rVert_{\infty,E_s\times E_s}.
\end{align*}
Therefore,
\begin{align*}
    \lVert P_4(T_1,T_2)\rVert\lesssim \lVert \phi\rVert_{\infty,E_s\times E_s}.
\end{align*}

Combining all together, it is immediate that \begin{equation*}
    \lVert \phi(T_1,T_2)\rVert\lesssim\lVert\phi\rVert_{\infty,E_s \times E_s},\qquad \text{for all}\hspace{0.2cm}\phi\in\mathbb{C}[z_1,z_2].
    \end{equation*}

Conversely, let  $\lVert\phi(T_1,T_2)\rVert\lesssim\lVert\phi\rVert_{\infty,E_s\times E_s}$ for all $\phi\in\mathbb{C}[z_1,z_2]$. Also, assume that $f\in H^{\infty}(E_s\times E_s)$ and $t\in(0,1)$ and $t'\in(t,1)$. Therefore, by [\cite{MR4043874}, Lemma 2.4], there exists a sequence polynomial $(\phi_m)_m$ of $\mathbb{C}[z_1,z_2]$ such that $\phi_m\rightarrow f$ uniformly in $t'\overline{E_s}\times t'\overline{E_s}$ as $m\rightarrow \infty.$ 
Since $\sigma(tT_1),\sigma(tT_2)\subset t'E_u$, by classical Dunford-Riesz functional calculus, we have that
 \[\phi_m(tT_1,tT_2)=(\frac{1}{2\pi i})^2\int_ {\partial t'E_u} \int_{\partial t'E_u} \phi_m(\lambda_1,\lambda_2)R(\lambda_1,tT_1)R(\lambda_2,tT_2)d\lambda_1d\lambda_2. \]
Note that as $m\rightarrow{\infty}$ , $\phi_m\rightarrow f$. Hence, $\lVert\phi_m\rVert_{\infty},_{t'E_s\times t'E_s}\rightarrow \lVert f \rVert_{\infty,t'E_s\times t'E_s}$.

From our assumption, we have that
 \begin{equation}
    \lVert \phi_m(tT_1,tT_2)\rVert\lesssim \lVert \phi_m\rVert_{{\infty},t'E_s\times t'E_s}.
\end{equation}
    Now taking $m\rightarrow{\infty}$ both sides, we have
    \begin{equation}
        \lVert f(tT_1,tT_2)\rVert\lesssim  \lVert f\rVert_{\infty,t'E_s\times t'E_s}.
    \end{equation}
By the Dominated Convergence Theorem, we get that 
         $\lim\limits_{t\rightarrow 1}f(tT_1,tT_2)=f(T_1,T_2)$.
         
Combining with (10), it follows that
         \begin{equation}
             \lVert f(T_1,T_2)\rVert\lesssim \lVert f \rVert_{\infty,t'E_s\times t'E_s}\lesssim \lVert f\rVert_{\infty,E_s\times E_s} , \qquad   \forall f\in H^{\infty}_0(E_s\times E_s).
         \end{equation}
     Hence, the proof of the proposition is completed. \end{proof}
     \section{Transfer principle: Proof of Theorem \eqref{transferprin}}\label{sectran}
The proof of Theorem \ref{transferprin} is long and will be divided into several lemmas. We also use the notation as in Sections \ref{prelim} and \ref{jointritte}. The proof for $N=1$ is rather straightforward and can be done along the lines of \cite{MR3928691}. Hence, we assume that $N\geq2$. Let us define $A^1_i=I_{X}-\overline{\xi}_iT_1$ and $A^2_i=I_{X}-\overline{\xi_i}T_2$ , where $i=1,2\dots,N$. Let $(A^1_i,A^2_j)$ has bounded $H^{\infty}(\Sigma_{\theta_{i}}\times\Sigma_{\theta_{j}})$- functional calculus. We may choose $\theta\in(0,\frac{\pi}{2})$ close enough to $\frac{\pi}{2}$ so that  $(A^1_{i},A^2_j)$ admits bounded $H^{\infty}(\Sigma_{\theta}\times \Sigma_{\theta})$-functional calculus for all
    $i,j\in\{1,2,\dots,N\}.$ Assume that $\{\xi_1,\xi_2,\dots,\xi_N\}$ is oriented counter clockwise on $\mathbb{T}$ and define $\xi_{N+1}=\xi_1$. 
According to the proof of [\cite{MR4819960}, Theorem 4.3], we can find a closed arc $\Gamma$ of $\mathbb{C}$ joining the point where $\partial\Sigma(\xi_i,\theta)_+$ and $\partial\Sigma(\xi_i,\theta)_-$ meet on $\mathbb{T}$. For a $r\in(0,1)$ and $\theta'\in(0,\frac{\pi}{2})$ we can obtain a set with the points $\{z_1,z_2,\dots,z_n\}$ on $r\Gamma$ ordered counter clockwise, such that 
\begin{enumerate}
    \item [(i)] $\sigma(T)\subset\Sigma(z_i,\theta')$, $i=1,\dots,p.$
    \item[(ii)] $\partial\Sigma(\xi_i,\theta)_+$ and $\partial\Sigma(\xi_i,\theta)_-$ meets in $\mathbb{D}\setminus\{0\}.$
    \item[(iii)]$\partial\Sigma(z_i,\theta')_+$ and $\partial\Sigma(z_i,\theta')_-$ meets in $\mathbb{D}\setminus\{0\}$, $i=1, \dots, p-1$.
\end{enumerate}
Putting together the point $\xi_i$ and $z_i$, we obtain the set $\{\zeta_1,\dots,\zeta_m\}$ of distinct points in $\mathbb{D}\setminus\{0\}$ oriented counter clockwise with the angle $\beta_i\in(0,\frac{\pi}{2})$, $i=1,\dots,m$; such that
\begin{enumerate}
    \item [(i)] $\{\zeta_1,\dots,\zeta_N\}\cap\mathbb{T}=E$
    \item [(ii)]For any $i=1,\dots,m$ , if there exist $j\in\{1,\dots,N\}$ such that $\zeta_i=\xi_j$, then $\beta_i=\theta.$
    \item[(iii)] define $\zeta_{m+1}=\zeta_1$, the half-line $\partial\Sigma(\zeta_i,\beta_i)_+$ and $\partial\Sigma(\zeta_i,\beta_i)_-$ meets exactly at one point $c_i\in\mathbb{D}\setminus\{0\},$ for all $i=1,2,\dots,m.$
    \end{enumerate}
    Now, consider $d_i=\frac{1}{2}\Big(c_i+\frac{c_i}{\lvert c_i\rvert}\Big)$, $i=1,2,\dots,m.$
    In this way we can contract two open convex polygons $\Delta_0$ and $\Delta$ with vertex $\{\zeta_1,c_1,\dots,\zeta_m,c_m\}$ and $\{\zeta_1,d_1,\dots,\zeta_m,d_m\}$ respectively.
    Observe that $\Delta_0\subset\Delta.$

 Let $\phi\in\mathbb{C}[z_1,z_2]$ such that $\phi(0,0)=0$. Consider the functions $\phi_{ij}:(\mathbb{C}\setminus\gamma_i)\times (\mathbb{C}\setminus\gamma_j)\rightarrow \mathbb{C}$, where $1\leq i,j\leq m$ defined as
    \begin{equation} 
    \phi_{ij}(z_1,z_2)=(\frac{1}{2\pi i})^2 \int_{\gamma_i}\int_{\gamma_j}\frac{\phi(\lambda_1,\lambda_2)}{(\lambda_1-z_1)(\lambda_2-z_2)}d\lambda_1 d\lambda_2 .\label{phi ij}
    \end{equation}
 So, by Cauchy's theorem, we have that 
          \begin{equation}
              \phi(z_1,z_2)=\sum\limits_{1\leq i,j\leq m}\phi_{ij}(z_1,z_2)\label{cauchy}.\end{equation}
    Let $i,j\in\{1,\dots,m\}$ and assume first that there exists $p,q\in\{1,\dots,N\}$ such that $\zeta_i=\xi_p$ and $\zeta_j=\xi_q$.
Consider the following functions \begin{equation}    
g_{ij}(z_1,z_2) = \phi_{ij}(\xi_p(1-z_1),\xi_q(1-z_2)).\label{g_{ij}} 
\end{equation}
Therefore,
\begin{equation}
    g_{ij}(z_1,z_2)=\frac{1}{(2\pi i)^2}\int_{\gamma_i}\int_{\gamma_j}\frac{\phi(\lambda_1,\lambda_2)}{(\lambda_1-\xi_p(1-z_1))(\lambda_2-\xi_q(1-z_2))}d\lambda_1 d\lambda_2.
    \end{equation}
For $i,j\in\{1,2,\dots,N\}$, let us define the sets \[\Delta_{i}=\{z\in \mathbb{C}: z=\xi_p(1-z') \ \ \    \text{where} \ \ z'\in \Delta\}\]
\[\Delta_{j}=\{z\in \mathbb{C}: z=\xi_q(1-z') \ \ \    \text{where} \ \ z'\in \Delta\}\]
\[\Delta^0_{i}=\{z\in \mathbb{C}: z=\xi_p(1-z') \ \ \    \text{where} \ \ z'\in \Delta^0\}\]\[\Delta^0_{j}=\{z\in \mathbb{C}: z=\xi_q(1-z') \ \ \    \text{where} \ \ z'\in \Delta^0\}\]
\begin{figure}[h]
    \centering
    \includegraphics[width=1\linewidth]{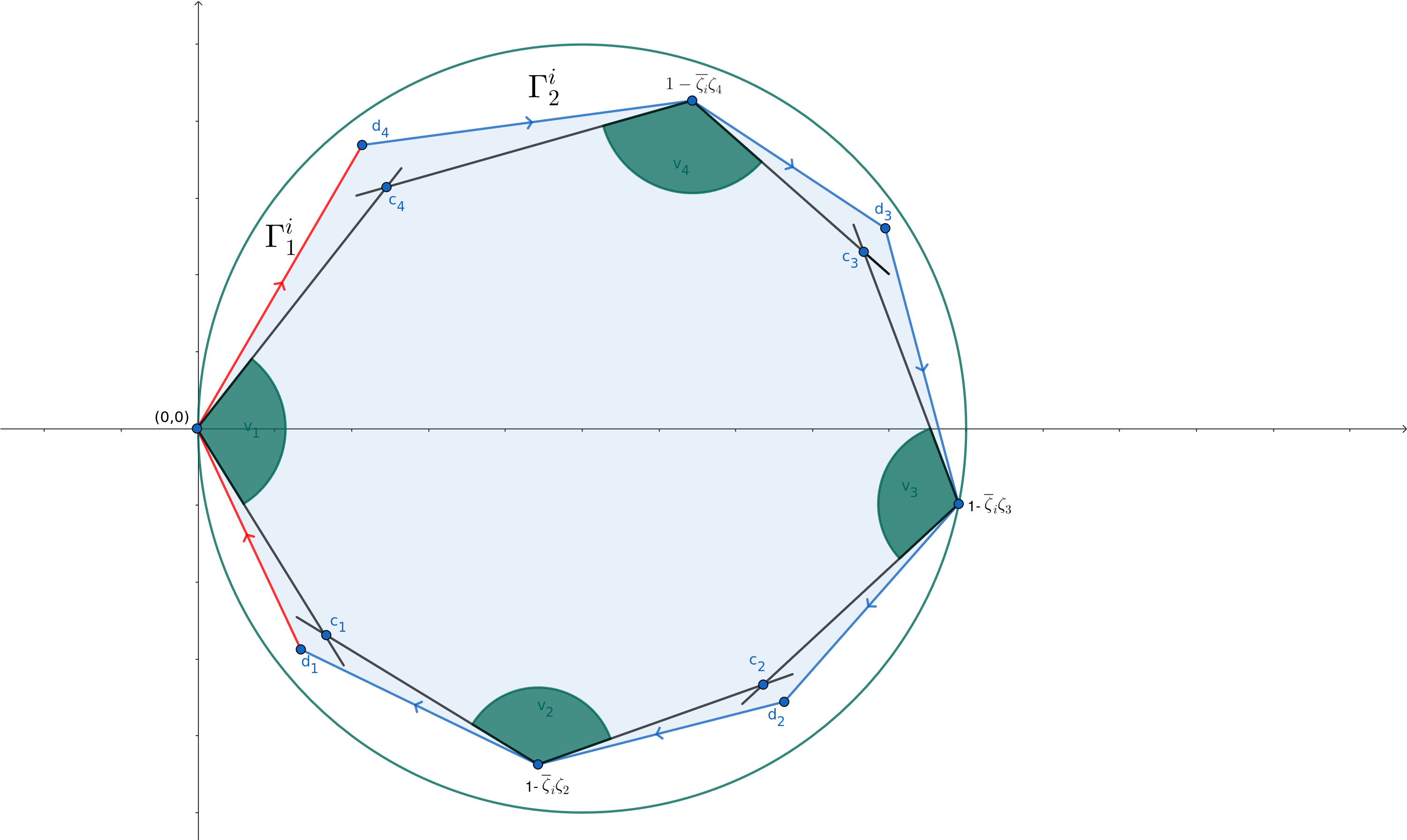}
    \caption{polygon $\Delta^0_{i}$ and $ \Delta_{i}$.}
    \label{fig:enter-label}
\end{figure}

    Let $\Gamma^i_1(t)=1-\overline{\xi}_p\gamma_i(t)$ and $\Gamma^j_1(t)=1-\overline{\xi}_q\gamma_j(t)$. Let us define $\Gamma^i_2(t)=\left(\partial\Delta\setminus\Gamma^i_1\right)(t)$ and $\Gamma^j_2(t)=\left(\partial\Delta\setminus\Gamma^j_1\right)(t)$.
We denote $\psi(z_1,z_2)=\phi(\xi_p(1-z_1),\xi_q(1-z_2))$. Consider the following functions
\begin{align*}
    F_{11}(z_1,z_2)=\frac{1}{(2\pi i)^2}\int_{\Gamma^i_1}\int_{\Gamma^j_1}\frac{\psi(\lambda_1,\lambda_2)}{(\lambda_1-z_1)(\lambda_2-z_2)}d\lambda_1d\lambda_2,\qquad (z_1,z_2)\in(\mathbb{C}\setminus\Gamma^i_1)\times(\mathbb{C}\setminus\Gamma^j_1).
\end{align*}
\begin{align*}
    F_{12}(z_1,z_2)=\frac{1}{(2\pi i)^2}\int_{\Gamma^i_1}\int_{\Gamma^j_2}\frac{\psi(\lambda_1,\lambda_2)}{(\lambda_1-z_1)(\lambda_2-z_2)}d\lambda_1d\lambda_2,\qquad (z_1,z_2)\in(\mathbb{C}\setminus\Gamma^i_1)\times(\mathbb{C}\setminus\Gamma^j_2).
\end{align*}
\begin{align*}
    F_{21}(z_1,z_2)=\frac{1}{(2\pi i)^2}\int_{\Gamma^i_2}\int_{\Gamma^j_1}\frac{\psi(\lambda_1,\lambda_2)}{(\lambda_1-z_1)(\lambda_2-z_2)}d\lambda_1d\lambda_2,\qquad(z_1,z_2)\in(\mathbb{C}\setminus\Gamma^i_1)\times(\mathbb{C}\setminus\Gamma^j_1).
\end{align*}
\begin{align*}
     F_{22}(z_1,z_2)=\frac{1}{(2\pi i)^2}\int_{\Gamma^i_2}\int_{\Gamma^j_2}\frac{\psi(\lambda_1,\lambda_2)}{(\lambda_1-z_1)(\lambda_2-z_2)}d\lambda_1d\lambda_2,\qquad (z_1,z_2)\in(\mathbb{C}\setminus\Gamma^i_2)\times(\mathbb{C}\setminus\Gamma^j_2).
     \end{align*}
By Cauchy's theorem, we have that
\begin{equation}
\psi(z_1,z_2)=F_{11}(z_1,z_2)+F_{12}(z_1,z_2)+F_{21}(z_1,z_2)+F_{22}(z_1,z_2).
\end{equation} 
Also, observe that $$F_{11}(z_1,z_2)=g_{ij}(z_1,z_2).$$
The following two lemmas can be proved along the same line of [\cite{MR3293430}, Proposition 4.1] and [\cite{MR3928691}, Lemma 3.2].
\begin{lemma}[\,\cite{MR3928691}\cite{MR3293430}]\label{H_0 for one variable}
    Let $\psi$ be any polynomial satisfying $\lvert\psi(z)\rvert\leq c\lvert z\rvert^s$ for some $c,s>0$. Let us define $$g(z)=f_1(z)+\frac{1}{1+z}f_2(0)$$ where $$f_j(z)=\frac{1}{2\pi i}\int_{\Gamma^i_j}\frac{\psi(\lambda)}{\lambda-z}d\lambda$$ for $j=1,2$.
    Then $g\in H^{\infty}_0(\Sigma_{\theta})$ and $\lVert g\rVert_{\infty,\Sigma_{\theta}}\lesssim\lVert \psi\rVert_{\infty,\Delta_{i}}$.
\end{lemma}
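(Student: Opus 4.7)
The plan is to follow the strategy of \cite{MR3293430}, Proposition 4.1. The key geometric observation is that $\Delta$ was constructed with strictly wider vertex openings than $\Delta_0$: at $\zeta_i = \xi_p$ the opening of $\Delta$ is $2\theta_1$ for some $\theta_1 > \theta = \beta_i$, because $d_{i-1},d_i$ are radially further from the origin than $c_{i-1},c_i$. Consequently the two edges $\Gamma^i_1 = 1-\overline{\xi}_p\gamma_i$ of $\partial\Delta_i$ meeting at $0$ lie along $\partial\Sigma_{\theta_1}$, so $\overline{\Sigma_\theta}\cap \Gamma^i_1 = \{0\}$ and the endpoints of $\Gamma^i_1$ sit in $\mathbb{C}\setminus\overline{\Sigma_\theta}$. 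This immediately yields holomorphy of $f_1$ on $\Sigma_\theta$; and since $-1\notin\Sigma_\theta$ (as $\theta<\pi/2$), the summand $f_2(0)/(1+z)$ is holomorphic there too, so $g$ is holomorphic on $\Sigma_\theta$.

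The remaining work is to check boundedness of $g$ on $\Sigma_\theta$ and the decay required by $H^\infty_0(\Sigma_\theta)$. The first step is to establish the cancellation $f_1(0)+f_2(0) = 0$: since the growth hypothesis forces $\psi(0) = 0$, applying Cauchy's theorem to $\lambda \mapsto \psi(\lambda)/\lambda$ on $\Delta_i \setminus \overline{D(0,\varepsilon)}$ and letting $\varepsilon \to 0$ yields the identity, the contribution of the excision circle vanishing thanks to $\psi(0)=0$. Using the algebraic split $(\lambda - z)^{-1} = \lambda^{-1} + z\lambda^{-1}(\lambda - z)^{-1}$ together with this cancellation, $g$ can be rewritten as
\[
g(z) = \frac{z}{2\pi i}\int_{\Gamma^i_1}\frac{\psi(\lambda)}{\lambda(\lambda - z)}\,d\lambda \,-\, \frac{z\,f_2(0)}{1+z},\qquad z \in \Sigma_\theta,
\]
making $g(0) = 0$ manifest. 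The strict gap $\theta_1 - \theta > 0$ furnishes the uniform angular bound $|\lambda - z| \gtrsim |\lambda| + |z|$ for $\lambda \in \Gamma^i_1$ and $z \in \Sigma_\theta$, with implicit constant $\sin((\theta_1-\theta)/2)$.

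Feeding these pieces into the integral and combining the pointwise bound $|\psi(\lambda)| \leq \|\psi\|_{\infty,\Delta_i}$ on $\Gamma^i_1$ with the compensating factor $|z|$ in the numerator (which exactly cancels the logarithmic divergence that would otherwise arise from integration near $0$), one obtains $\|g\|_{\infty,\Sigma_\theta} \lesssim \|\psi\|_{\infty,\Delta_i}$. For the decay, the explicit $z$ together with the growth $|\psi(\lambda)| \le c|\lambda|^s$ give $|g(z)| \lesssim |z|^{\min(s,1)}$ near $0$ and $|g(z)| = O(1/|z|)$ at infinity, which together package into $|g(z)| \lesssim |z|^{s'}/(1+|z|^{2s'})$ for a suitable $s' > 0$, placing $g$ in $H^\infty_0(\Sigma_\theta)$. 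The most delicate step I anticipate is the vanishing identity $f_1(0) + f_2(0) = 0$ at the boundary vertex $0 \in \partial\Delta_i$, which requires the principal-value style Cauchy argument above and is the only place where the specific boundary behaviour of $\psi$ at $0$ (not merely its decay) is used; once this is in place, all remaining estimates are routine transpositions of the one-variable argument of \cite{MR3293430}.
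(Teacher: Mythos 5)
Your proof is on the right track for the holomorphy and decay parts, but there is a genuine gap in the boundedness estimate $\lVert g\rVert_{\infty,\Sigma_\theta}\lesssim\lVert\psi\rVert_{\infty,\Delta_i}$.

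After you reduce to
\[
g(z)=\frac{z}{2\pi i}\int_{\Gamma^i_1}\frac{\psi(\lambda)}{\lambda(\lambda-z)}\,d\lambda-\frac{z\,f_2(0)}{1+z},
\]
you claim that the pointwise bound $\lvert\psi(\lambda)\rvert\le\lVert\psi\rVert_{\infty,\Delta_i}$ on $\Gamma^i_1$ together with the explicit prefactor $z$ ``cancels the logarithmic divergence.'' It does not. With $\lvert\lambda-z\rvert\gtrsim\lvert\lambda\rvert+\lvert z\rvert$ and $\lvert\lambda\rvert\approx t$ an arc-length parameter, the resulting majorant is
\[
\lvert z\rvert\,\lVert\psi\rVert_{\infty,\Delta_i}\int_0^L\frac{dt}{t\,(t+\lvert z\rvert)}
=\frac{\lVert\psi\rVert_{\infty,\Delta_i}}{c}\Bigl[\ln\tfrac{t}{t+\lvert z\rvert}\Bigr]_{t=0}^{t=L}=+\infty,
\]
because the bracket diverges at $t=0$. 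The factor $\lvert z\rvert$ scales the whole expression but does not regularize the non-integrable singularity $1/\lvert\lambda\rvert$ coming from $\psi(\lambda)/\lambda$; one would need the stronger pointwise bound $\lvert\psi(\lambda)\rvert\lesssim\lVert\psi\rVert_{\infty,\Delta_i}\lvert\lambda\rvert^{s'}$ on $\Gamma^i_1$, and that is not available with a constant independent of $\psi$ (Markov-type inequalities show $\lvert\psi'(0)\rvert$ cannot be controlled by $\lVert\psi\rVert_{\infty,\Delta_i}$ alone once the degree is unrestricted).

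The argument the paper actually relies on (and carries out explicitly in the two-variable Lemma on $\lVert h\rVert_{\infty,\Sigma_\theta\times\Sigma_\theta}$) is a case split. For $z\in\Sigma_\theta$ bounded away from $0$, one has $d(z,\Gamma^i_1)>0$, so $f_1(z)$ and hence $g(z)$ are directly bounded by $\lVert\psi\rVert_{\infty,\Delta_i}$. For $z$ in a small neighbourhood $V_i=D(0,\varepsilon)\cap\Sigma_\theta\subseteq\Delta^0_i$, one does not estimate the integral over $\Gamma^i_1$ at all; instead, Cauchy's formula on $\Delta_i$ gives $f_1(z)=\psi(z)-f_2(z)$, so that
\[
g(z)=\psi(z)-f_2(z)+\frac{f_2(0)}{1+z},
\]
each term of which is trivially $\lesssim\lVert\psi\rVert_{\infty,\Delta_i}$ because $\Gamma^i_2$ is bounded away from $V_i$. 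This is the missing ingredient; the direct integral estimate over $\Gamma^i_1$ cannot replace it. (As a minor aside, your derivation of $f_1(0)+f_2(0)=0$ is correct but more elaborate than necessary: since $\psi$ is a polynomial vanishing at $0$, $\psi(\lambda)/\lambda$ is entire, so $\oint_{\partial\Delta_i}\psi(\lambda)/\lambda\,d\lambda=0$ by Cauchy's theorem outright, with no principal-value excision required.)
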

\begin{lemma}[\cite{MR3928691}, Lemma 3.2]
    Let us consider the function
    \begin{align*}
    h(z_1,z_2)=F_{11}(z_1,z_2)-\frac{1}{1+z_2}F_{12}(z_1,0)-\frac{1}{1+z_1}F_{21}(0,z_2) +\frac{F_{22}(0,0)}{(1+z_1)(1+z_2)}.
    \end{align*}
    Then $h\in H^{\infty}_0(\Sigma_{\theta}\times\Sigma_{\theta})$.\label{lemma for h}
\end{lemma}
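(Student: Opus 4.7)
My plan is to verify the three defining conditions of $H^{\infty}_0(\Sigma_\theta \times \Sigma_\theta)$ for $h$ directly: holomorphy, boundedness, and two-sided polynomial decay in each of $z_1,z_2$. The structure of $h$ is the two-variable analogue of the one-variable correction $g(z) = f_1(z) + \tfrac{f_2(0)}{1+z}$ from Lemma \ref{H_0 for one variable}, iterated once in each variable; the weights $\tfrac{1}{1+z_i}$ are the correction kernels that kill the boundary values at the origin without disturbing decay at infinity.

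Holomorphy on $\Sigma_\theta \times \Sigma_\theta$ is immediate: by the construction of the polygon $\Delta$ and its rescaled images $\Delta_i,\Delta_j$, the compact contours $\Gamma^i_1,\Gamma^i_2,\Gamma^j_1,\Gamma^j_2$ are disjoint from $\overline{\Sigma}_\theta$, and $-1\notin\overline{\Sigma}_\theta$ since $\theta<\tfrac{\pi}{2}$. Boundedness on $\Sigma_\theta\times\Sigma_\theta$ then follows from the boundedness of $\psi$ on the compact contours and of $(1+z_i)^{-1}$ on $\Sigma_\theta$. Decay at infinity is likewise immediate from the Cauchy integral representation: as $|z_k|\to\infty$ in $\Sigma_\theta$, the kernel $(\lambda_k-z_k)^{-1}$ is $O(1/|z_k|)$ uniformly in $\lambda_k$ on the compact contours, and each $(1+z_k)^{-1}$ factor produces the same decay, so every summand of $h$ is $O(1/(|z_1||z_2|))$ at infinity.

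The substance of the lemma lies in the vanishing at the origin. I would establish $h(z_1,0)\equiv 0$ and $h(0,z_2)\equiv 0$: since $h$ extends holomorphically to a neighborhood of $\{0\}\times\Sigma_\theta$ and $\Sigma_\theta\times\{0\}$ and is bounded, holomorphic factorization then yields $|h(z_1,z_2)|\lesssim |z_1|$ and $|h(z_1,z_2)|\lesssim |z_2|$ near the origin, producing the required decay factor $\tfrac{|z_k|^{s_k}}{1+|z_k|^{2s_k}}$ with $s_k=1$. The vanishing identities $h(z_1,0)=0=h(0,z_2)$ should follow from Cauchy's theorem applied to the closed contours $\partial\Delta_i=\Gamma^i_1\cup\Gamma^i_2$ and $\partial\Delta_j=\Gamma^j_1\cup\Gamma^j_2$: setting $z_2=0$ collapses the $\lambda_2$-integral over $\Gamma^j_1\cup\Gamma^j_2$ via the residue at $\lambda_2=0$, converting sums such as $F_{11}(z_1,0)+F_{12}(z_1,0)$ into one-variable Cauchy integrals of $\psi(\lambda_1,0)$, and after the analogous reduction for $F_{21}(0,0)+F_{22}(0,0)$ the four-term algebraic combination defining $h$ cancels to zero. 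A symmetric argument handles $h(0,z_2)=0$.

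The main obstacle, and the most delicate part of the bookkeeping, is pinning down the sign and orientation conventions: one must determine which of $\Delta_i,\Delta_j$ contains the origin in its interior, the orientation of each sub-contour $\Gamma^i_a,\Gamma^j_b$ induced from $\partial\Delta_i,\partial\Delta_j$, and consequently which particular sums of $F_{ab}$'s collapse to a one-variable Cauchy integral of $\psi$ evaluated at zero in one argument. Once these conventions are fixed (consistently with the preceding polygon construction), the specific four-term combination in $h$ is exactly the one that forces the cancellations, in direct parallel with the one-variable argument of Lemma \ref{H_0 for one variable} applied simultaneously in both variables.
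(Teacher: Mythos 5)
Your argument hinges on the assertion that the contours $\Gamma^i_1,\Gamma^j_1$ are disjoint from $\overline{\Sigma}_\theta$, and this is false. By construction $\Gamma^i_1(t)=1-\overline{\xi}_p\gamma_i(t)$, where $\gamma_i$ is the arc of $\partial\Delta$ containing the vertex $\xi_p$, so $\Gamma^i_1$ passes through $0$, which is simultaneously a vertex of the transformed polygon $\Delta_i$ and a boundary point of $\Sigma_\theta$. This is precisely why the proof of Lemma~\ref{second h} excises the neighborhoods $V_l=D(0,\epsilon)\cap\Delta^0_l$ of the origin before estimating. Once this is corrected, neither boundedness nor the eventual decay of $F_{11}$ (or of $F_{12}(z_1,0)$ and $F_{21}(0,z_2)$) near the origin is ``immediate'' from a naive distance estimate; the actual mechanism is to use Cauchy's formula on $\partial\Delta_i=\Gamma^i_1\cup\Gamma^i_2$ and $\partial\Delta_j=\Gamma^j_1\cup\Gamma^j_2$ to rewrite $F_{11}$ (e.g.\ $F_{11}=\psi-F_{12}-F_{21}-F_{22}$ on $\Delta^0_i\times\Delta^0_j$), so that for small $|z_k|$ only the contours $\Gamma^i_2,\Gamma^j_2$, which \emph{are} bounded away from $0$, carry the estimate. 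This is exactly what the proof of Lemma~\ref{second h} does, and your proposal omits it.

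The same point undercuts your decay-at-the-origin argument. Since $0\in\Gamma^j_1$, the restriction $F_{11}(z_1,0)$ is not defined, so the identity $h(z_1,0)\equiv 0$ cannot be asserted until the holomorphic extension of $F_{11}$ across that face has been produced via the Cauchy rewriting; and the proposed ``collapse via the residue at $\lambda_2=0$'' is invalid because $0$ lies on $\partial\Delta_j$ (it is a vertex), not in its interior, so the residue theorem does not apply. Your uncertainty about ``which of $\Delta_i,\Delta_j$ contains the origin in its interior'' is symptomatic: neither does. Finally, membership of $h$ in $H^\infty_0$ is not a formal consequence of the algebraic shape of $h$ alone; it uses the polynomial vanishing of $\psi$ near the origin in each variable, the two-variable analogue of the hypothesis $|\psi(z)|\leq c|z|^s$ that is stated explicitly in Lemma~\ref{H_0 for one variable} and that your argument never invokes.
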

\begin{lemma}\label{second h}
    There exists positive constant $K$ independent of $\phi$ such that 
    $$\lVert h \rVert_{\infty,\Sigma_{\theta}\times\Sigma_{\theta}}\leq K\lVert\phi\rVert_{\infty,\Delta \times \Delta}.$$
\end{lemma}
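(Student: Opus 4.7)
The plan is to iterate the one-variable estimate of Lemma~\ref{H_0 for one variable}, once for each coordinate. First, observe that the map $w\mapsto\xi_p(1-w)$ is a sup-norm isometry between $\Delta_i$ and $\Delta$ (similarly for $\xi_q$ and $\Delta_j$), so $\|\psi\|_{\infty,\Delta_i\times\Delta_j}=\|\phi\|_{\infty,\Delta\times\Delta}$, and it suffices to bound $\|h\|_{\infty,\Sigma_\theta\times\Sigma_\theta}$ by a constant multiple of $\|\psi\|_{\infty,\Delta_i\times\Delta_j}$.

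Grouping the four terms defining $h$ by the factor $1/(1+z_1)$ and applying Fubini gives
\begin{equation*}
h(z_1,z_2)=\frac{1}{2\pi i}\int_{\Gamma^i_1}\frac{g(\lambda_1,z_2)}{\lambda_1-z_1}\,d\lambda_1-\frac{1}{1+z_1}\cdot\frac{1}{2\pi i}\int_{\Gamma^i_2}\frac{g(\lambda_1,z_2)}{\lambda_1}\,d\lambda_1,
\end{equation*}
where
\begin{equation*}
g(\lambda_1,z_2):=\frac{1}{2\pi i}\int_{\Gamma^j_1}\frac{\psi(\lambda_1,\lambda_2)}{\lambda_2-z_2}\,d\lambda_2-\frac{1}{1+z_2}\cdot\frac{1}{2\pi i}\int_{\Gamma^j_2}\frac{\psi(\lambda_1,\lambda_2)}{\lambda_2}\,d\lambda_2.
\end{equation*}
For each fixed $\lambda_1\in\Gamma^i_1\cup\Gamma^i_2$, the function $z_2\mapsto g(\lambda_1,z_2)$ is exactly the function produced by Lemma~\ref{H_0 for one variable} applied to the polynomial $\lambda_2\mapsto\psi(\lambda_1,\lambda_2)$ on $\Delta_j$ (up to a sign inherited from contour orientations, which does not affect the bound). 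Therefore
\[
\sup_{z_2\in\Sigma_\theta}|g(\lambda_1,z_2)|\lesssim\|\psi(\lambda_1,\cdot)\|_{\infty,\Delta_j}\leq\|\psi\|_{\infty,\Delta_i\times\Delta_j},
\]
uniformly in $\lambda_1$.

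For each fixed $z_2\in\Sigma_\theta$, the displayed formula for $h$ exhibits $z_1\mapsto h(z_1,z_2)$ as the output of Lemma~\ref{H_0 for one variable} applied in the $z_1$-variable to the bounded holomorphic input $\lambda_1\mapsto g(\lambda_1,z_2)$ on $\Delta_i$, whose sup-norm is controlled by $\|\psi\|_{\infty,\Delta_i\times\Delta_j}$. Invoking the same estimate a second time gives
\[
\sup_{z_1\in\Sigma_\theta}|h(z_1,z_2)|\lesssim\sup_{\lambda_1\in\Delta_i}|g(\lambda_1,z_2)|\lesssim\|\psi\|_{\infty,\Delta_i\times\Delta_j},
\]
and taking the supremum over $z_2$ concludes the proof.

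The main technical point will be to verify that the quantitative estimate of Lemma~\ref{H_0 for one variable}, stated for polynomials satisfying $|\psi(z)|\leq c|z|^s$, persists when the input is an arbitrary bounded holomorphic function on $\Delta_i$. This is handled by inspecting its proof: the bound $\|g\|_{\infty,\Sigma_\theta}\lesssim\|\psi\|_{\infty,\Delta_i}$ is extracted from contour estimates that rely only on an $L^\infty$ bound on the input together with the geometric separation between $\Sigma_\theta$ and the arcs $\Gamma^i_1,\Gamma^i_2$; the vanishing/decay hypothesis is needed solely for $H^\infty_0$-membership (already supplied here by Lemma~\ref{lemma for h}) and for absolute convergence of the contour integrals, which is automatic on our compact contours. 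A secondary bookkeeping item is reconciling the sign $-1/(1+z_1)$ above with the sign $+1/(1+z)$ of Lemma~\ref{H_0 for one variable}; this is taken care of by the orientation conventions on $\Gamma^i_1,\Gamma^i_2,\Gamma^j_1,\Gamma^j_2$ inherited from the decomposition $\partial\Delta=\gamma_1+\cdots+\gamma_m$.
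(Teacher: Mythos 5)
Your strategy of reducing the two-variable estimate to two nested applications of the one-variable Lemma~\ref{H_0 for one variable} is genuinely different from the paper's argument. The paper works directly with the four functions $F_{lm}$: it splits $\Sigma_\theta\times\Sigma_\theta$ according to proximity of each coordinate to the origin, estimates $F_{11}$ directly on the "good" region, uses the Cauchy identity $\psi=\sum F_{lm}$ on $V_i\times V_j$, and finally bounds $F_{12}(z_1,0)$, $F_{21}(0,z_2)$, $F_{22}(0,0)$ separately. Your nested formula for $h$ (as a one-variable Cauchy integral with $g(\lambda_1,z_2)$ in the numerator, where $g$ is itself the one-variable output in $z_2$) is algebraically correct and the iteration idea is appealing, especially with a view toward $n$-tuples.

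The problem is the final paragraph, which is precisely where the difficulty lives. You assert that the $L^\infty$-bound $\|g\|_{\infty,\Sigma_\theta}\lesssim\|\psi\|_{\infty,\Delta_i}$ of Lemma~\ref{H_0 for one variable} "relies only on an $L^\infty$ bound on the input together with the geometric separation between $\Sigma_\theta$ and the arcs $\Gamma^i_1,\Gamma^i_2$", and that the decay hypothesis $|\psi(z)|\leq c|z|^s$ serves only for $H^\infty_0$-membership and absolute convergence. That is not correct: $\Gamma^i_1$ passes through the origin and $0\in\overline{\Sigma_\theta}$, so there is \emph{no} geometric separation between $\Sigma_\theta$ and $\Gamma^i_1$. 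For a merely bounded input, the naive estimate of $f_1(z)=\frac{1}{2\pi i}\int_{\Gamma^i_1}\frac{\psi(\lambda)}{\lambda-z}\,d\lambda$ produces a logarithmic blow-up as $z\to 0$ in $\Sigma_\theta$, even though each integral converges. What actually rescues the uniform bound, both in the cited one-variable result and in the paper's proof of this lemma, is a Cauchy cancellation: for $z\in\Sigma_\theta$ near $0$ (hence inside $\Delta_i$, because $\theta$ is smaller than the vertex angle of $\Delta_i$ at $0$) one replaces $f_1$ by $\psi-f_2$ and exploits that $\Gamma^i_2$ is bounded away from the origin. This argument does apply to your iterated inputs, since $\lambda_2\mapsto\psi(\lambda_1,\lambda_2)$ and $\lambda_1\mapsto g(\lambda_1,z_2)$ are polynomials, hence holomorphic across $\overline{\Delta_j}$ and $\overline{\Delta_i}$; but as written you have not supplied it, and the stated reason for discarding the decay hypothesis is wrong. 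Until the cancellation near the origin is made explicit at each of the two stages, the proof has a genuine gap.
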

\begin{proof}
  For all  small $\epsilon>0$, consider $V_{l}=D(0,\epsilon)\cap\Delta^0_{l}$, $l=i,j.$ Note that, $d\left((\Sigma_{\theta}\setminus V_{l}),\Gamma^l_1\right)>0,l=i,j.$ Therefore, we have  $\lvert F_{11}(z_1,z_2)\rvert\lesssim\lVert\psi\rVert_{\infty,\Delta_{i}\times\Delta_{j}}$ for all $(z_1,z_2)\in(\Sigma_{\theta}\setminus V_i)\times(\Sigma_{\theta}\setminus V_{j})$. To this end rewrite $F_{11}$ as
\begin{align*}
  F_{11}(z_1,z_2)=\frac{1}{2\pi i}\int_{\Gamma^j_1}\frac{1}{\lambda_2-z_2}(\psi(z_1,\lambda_2)-\frac{1}{2 \pi i}\int_{\Gamma^i_2}\frac{\psi(\lambda_1,\lambda_2)}{(\lambda_1-z_1)}d\lambda_1)d\lambda_2.
 \end{align*}
 Note that, $d(z_1,\Gamma^i_2)>0$ and $d(z_2,\Gamma^j_1)>0$ for all $(z_1,z_2)\in V_i\times(\Sigma_{\theta}\setminus V_j)$. Hence , $\lvert F_{11}(z_1,z_2)\rvert\lesssim\lVert\psi\rVert_{\infty,\Delta_{i}\times\Delta_{j}}$ for all $(z_1,z_2)\in V_i\times(\Sigma_{\theta}\setminus V_j)$. Similarly, we will get the estimate on $(\Sigma_{\theta}\setminus V_i)\times V_j$.
 
 Let us define  $$F_{12}(z_1,z_2)=\frac{1}{2\pi i}\int_{\Gamma^j_2}\frac{1}{\lambda_2-z_2}\left(\psi(z_1,\lambda_2)-\frac{1}{2 \pi i}\int_{\Gamma^i_2}\frac{\psi(\lambda_1,\lambda_2)}{\lambda_1-z_1}d\lambda_1\right)d\lambda_2.$$
Since $d(\Gamma^j_2,V_i)>0$, we have the following
\begin{equation}
    \lvert F_{12}(z_1,z_2)\rvert\lesssim\lVert\psi\rVert_{\infty,\Delta_{i}\times\Delta_{j}} ,\qquad(z_1,z_2)\in V_i\times V_j.\label{v1-times-v1}
\end{equation}
\begin{equation}
\lvert F_{22}(z_1,z_2)\rvert\lesssim\lVert\psi\rvert_{\infty,\Delta_{i}\times\Delta_{j}},\qquad (z_1,z_2)\in V_i\times V_j.
\end{equation}
  Applying Cauchy's theorem, we obtain
  \begin{equation}
      \psi(z_1,z_2)=\sum_{1\leq l,m \leq 2}F_{lm}(z_1,z_2),\qquad(z_1,z_2)\in V_i\times V_j.\label{cauchy 2}
  \end{equation}
  Therefore, taking $F_{11}$ other side of equation \ref{cauchy 2}, we have
  \begin{equation}    
  \lVert F_{11}\rVert_{\infty,\Sigma_{\theta}\times\Sigma_{\theta}}\lesssim\lVert\psi\rVert_{\infty,\Delta_{i}\times\Delta_{j}}.\label{11}
\end{equation}

  Since $d(\Gamma^i_1,\Sigma_{\theta}\setminus V_i)>0$, we have that
  $\lvert F_{12}(z_1,0)\rvert\lesssim\lVert\psi\rVert_{\infty,\Delta_i\times\Delta_j}$ for all $z_1\in{\Sigma_{\theta}}\setminus V_i.$
  Therefore, from \ref{v1-times-v1}, we obtain \begin{equation}    
\lVert F_{12}(z_1,0)\rVert_{\infty,\Sigma_{\theta}}\lesssim \lVert\psi\rVert_{\infty,\Delta_i\times\Delta_j}.\label{12}
\end{equation}

Similarly, one can check that \begin{equation}
\lVert F_{21}(0,z_2)\rVert_{\infty,\Sigma_{\theta}}\lesssim \lVert\psi\rVert_{\infty,\Delta_i\times\Delta_j}.\label{21}
\end{equation}
Note that \begin{align*}
    F_{22}(0,0)=\frac{1}{(2\pi i)^2}\int_{\Gamma_2}\int_{\Gamma_2}\frac{\psi(\lambda_1,\lambda_2)}{\lambda_1\lambda_2}.
\end{align*}
Since $d(0,\Gamma^i_2)>0$ and $d(0,\Gamma^j_2)>0$, it follows that
\begin{equation}
\lvert F_{22}(0,0)\rvert\lesssim\lVert\psi\rVert_{\infty,\Delta_i\times\Delta_j}.\label{22}
\end{equation}

Combining the inequalities in \ref{11}, \ref{12}, \ref{21} and \ref{22}, we conclude that
\begin{equation}
    \lVert h\rVert_{\infty,\Sigma_{\theta}\times\Sigma_{\theta}}\leq K\lVert\psi\rVert_{\infty,\Delta_{i}\times\Delta_{j}},\qquad\text{for some} \hspace{0.2cm}K>0.
\end{equation} 
for some $K>0$. Since $\lVert \psi \rVert_{\infty,\Delta_{i}\times\Delta_{j}}=\lVert \phi\rVert_{\infty,\Delta\times\Delta}$ , it follows that
\begin{equation*}
    \lVert h \rVert_{\infty,\Sigma_{\theta}\times\Sigma_{\theta}}\leq K\lVert\phi\rVert_{\infty,\Delta \times \Delta}.
\end{equation*}
This completes the proof.
\end{proof}
 Let $A_1=A^1_p=I_{X}-\overline{\xi}_pT_1$ and $A_2=A^2_l=I_{X}-\overline{\xi}_qT_2$
. For any $ \rho\in(0,1)$ , define \begin{equation}
   \mathbf{A_{\rho}}:=({A^\rho_{1}}
,{A^\rho_{2}})=((1-\rho)I_{X}+\rho A_1,(1-\rho)I_{X}+\rho A_2).\label{eqn:}
    \end{equation}  Also, observe that $\sigma (A^\rho_{1}),\sigma ({A^\rho_{2}})\subset\Sigma_{\theta} .$

\begin{lemma}
Let $\xi_i$, $\xi_j\in \mathbb{T}$. 
Then $\lVert g_{ij}(\mathbf{A_\rho})\rVert\lesssim\lVert\phi\rVert_{\infty,\Delta\times\Delta}.$ \label{g_ij} 
\end{lemma}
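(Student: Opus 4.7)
The plan is to exploit the decomposition already set up in the proof: since $g_{ij}(z_1,z_2) = F_{11}(z_1,z_2)$ by construction, I would solve the defining equation of $h$ in Lemma~\ref{lemma for h} for $F_{11}$ to obtain
\begin{equation*}
g_{ij}(z_1,z_2) = h(z_1,z_2) + \frac{F_{12}(z_1,0)}{1+z_2} + \frac{F_{21}(0,z_2)}{1+z_1} - \frac{F_{22}(0,0)}{(1+z_1)(1+z_2)}.
\end{equation*}
Since $\sigma(A^\rho_1), \sigma(A^\rho_2) \subset \Sigma_\theta$, applying the joint holomorphic functional calculus for $\mathbf{A_\rho}$ yields
\begin{equation*}
g_{ij}(\mathbf{A_\rho}) = h(\mathbf{A_\rho}) + F_{12}(A^\rho_1,0)(I+A^\rho_2)^{-1} + (I+A^\rho_1)^{-1}F_{21}(0,A^\rho_2) - F_{22}(0,0)(I+A^\rho_1)^{-1}(I+A^\rho_2)^{-1},
\end{equation*}
and the remaining task is to bound each of the four summands by $\|\phi\|_{\infty,\Delta\times\Delta}$, uniformly in $\rho \in (0,1)$.

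The first summand I would handle by combining Lemma~\ref{double h of A-rho}, which gives $\|h(\mathbf{A_\rho})\| \lesssim \|h\|_{\infty,\Sigma_\theta\times\Sigma_\theta}$ independently of $\rho$, with Lemma~\ref{second h}, which bounds $\|h\|_{\infty,\Sigma_\theta\times\Sigma_\theta}$ by $\|\phi\|_{\infty,\Delta\times\Delta}$. The fourth summand is essentially immediate: Lemma~\ref{second h} already controls $|F_{22}(0,0)|$, and the resolvents $(I+A^\rho_i)^{-1}$ are uniformly bounded in $\rho$ since each $\sigma(A^\rho_i)$ lies in the fixed sector $\Sigma_\theta$ of angle less than $\pi/2$, at fixed positive distance from $-1$.

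The technical heart of the argument lies in the second (and, symmetrically, the third) summand: $F_{12}(\cdot,0)$ is bounded on $\Sigma_\theta$ but does \emph{not} vanish at $0$ in general, so the $H^\infty_0$-calculus for $A^\rho_1$ cannot be applied to it directly. I would resolve this by invoking Lemma~\ref{H_0 for one variable} with the inner slice
\begin{equation*}
\tilde\psi_1(\lambda_1) := \frac{1}{2\pi i}\int_{\Gamma^j_2}\frac{\psi(\lambda_1,\lambda_2)}{\lambda_2}\,d\lambda_2,
\end{equation*}
a polynomial in $\lambda_1$ with $\|\tilde\psi_1\|_{\infty,\Delta_i} \lesssim \|\phi\|_{\infty,\Delta\times\Delta}$, to produce a function $G_1(z_1) := F_{12}(z_1,0) + \frac{F_{22}(0,0)}{1+z_1}$ belonging to $H^\infty_0(\Sigma_\theta)$ with $\|G_1\|_{\infty,\Sigma_\theta} \lesssim \|\phi\|_{\infty,\Delta\times\Delta}$. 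Then substituting $F_{12}(A^\rho_1,0) = G_1(A^\rho_1) - F_{22}(0,0)(I+A^\rho_1)^{-1}$ and invoking Lemma~\ref{p independent of h} on $G_1$ produces the desired $\rho$-uniform bound; the correction term merely renormalizes the coefficient of $(I+A^\rho_1)^{-1}(I+A^\rho_2)^{-1}$ already absorbed into the fourth-term estimate. The main obstacle is precisely this extraction of an $H^\infty_0$ function out of the non-vanishing one-slice $F_{12}(\cdot,0)$ while preserving both the $\|\phi\|_\infty$-dependence and the $\rho$-uniformity; once this is done, all four estimates combine to give $\|g_{ij}(\mathbf{A_\rho})\| \lesssim \|\phi\|_{\infty,\Delta\times\Delta}$.
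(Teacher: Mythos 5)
Your argument follows the paper's proof exactly: you solve the defining relation of $h$ for $g_{ij}=F_{11}$, bound $h(\mathbf{A_\rho})$ via Lemmas \ref{double h of A-rho} and \ref{second h}, and handle the one-variable slices $F_{12}(\cdot,0)$ and $F_{21}(0,\cdot)$ by adding $F_{22}(0,0)/(1+z)$ to land them in $H^\infty_0(\Sigma_\theta)$ so that Lemma \ref{p independent of h} gives the $\rho$-uniform bound — your $G_1$ is precisely the paper's auxiliary function $\tilde h$. The only differences are cosmetic: you make explicit the appeal to Lemma \ref{H_0 for one variable} (via the slice $\tilde\psi_1$) to justify $G_1 \in H^\infty_0(\Sigma_\theta)$ together with the bound $\|G_1\|_{\infty,\Sigma_\theta}\lesssim\|\phi\|_{\infty,\Delta\times\Delta}$, which the paper leaves tacit, and your signs in the identity for $F_{11}$ are the correctly solved ones (the paper's first display carries a sign typo).
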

\begin{proof}From Lemma \ref{lemma for h}, we have that \begin{align*}
    F_{11}(z_1,z_2)=h(z_1,z_2)-\frac{1}{1+z_2}F_{12}(z_1,0)-\frac{1}{1+z_1}F_{21}(0,z_2) +\frac{F_{22}(0,0)}{(1+z_1)(1+z_2)}.
    \end{align*}
    Therefore,
    \begin{align*}  
    g_{ij}(z_1,z_2)=h(z_1,z_2))-\frac{1}{(1+z_2)}F_{12}(z_1,0)\\\hspace{4 cm}&-\frac{1}{(1+z_1)}F_{21}(0,z_2))+\frac{F_{22}(0,0)}{(1+z_1))(1+z_2))}.
    \end{align*}
    As $h\in H^{\infty}_0(\Sigma_{\theta}\times\Sigma_{\theta})$ , by Lemma \ref{double h of A-rho}  $\lVert h(A_{\rho})\rVert\lesssim \lVert h\rVert_{\infty,\Sigma_{\theta}\times\Sigma_{\theta}}.$ Consider
    \begin{equation*}
    \tilde{h}(z_1))=F_{12}(z_1,0)+\frac{1}{(1+z_1)}F_{22}(0,0).
    \end{equation*}
     From Lemma \ref{p independent of h}, we have $\lVert \tilde{h}(A^{\rho}_{1})\rVert\lesssim\lVert\tilde{h}\rVert_{\infty,\Sigma_\theta}.$ It is immediate that
     \begin{equation}
     \lVert F_{12}(A^{\rho}_{1},0)\rVert\lesssim \lVert\phi\rVert_{\infty,\Delta\times\Delta}.\label{A1}
     \end{equation}
Similarly, we get the appropriate bound for $F_{21}$,
\begin{equation}
     \lVert F_{21}(0,{A^\rho_{2}})\rVert\lesssim \lVert\phi\rVert_{\infty,\Delta\times\Delta}.\label{A2}
     \end{equation}
     We have that 
     \begin{align*}
        g_{ij}(A_{\rho})=h(A_{\rho})+(I+A^{\rho}_2)^{-1}F_{12}(A^{\rho}_1,0)-(1+A^{\rho}_1)^{-1}F_{21}(0,A^{\rho}_2)\\+F_{22}(0,0)(1+A^{\rho}_1)^{-1}(1+A^{\rho}_2)^{-1}.
        \end{align*}
        From the Lemmas \ref{lemma for h} and \ref{second h} and the equations \ref{A1} and \ref{A2}, we have 
        $\lVert g_{ij}(A_{\rho})\rVert\lesssim\lVert\phi\rVert_{\infty,\Delta\times\Delta}.$
        This completes the proof of the lemma.
\end{proof}
 
 \begin{lemma}Let $\zeta_i\in\mathbb{T}$ and $\zeta_j\notin\mathbb{T}$.
Then $\lVert \phi_{ij}(\rho T_1,\rho T_2)\rVert\lesssim\lVert\phi\rVert_{\infty,\Delta\times\Delta}.$\label{zeta-i,zeta-j}
  \end{lemma}
   \begin{proof}For a fixed $\lambda_2$, consider the following function \begin{equation*}  
 \psi^{\lambda_2}(z_1)=\frac{1}{2\pi i}\int_{\gamma_i}\frac{\phi(\lambda_1,\lambda_2)}{(\lambda_1-z_1)}d\lambda_1\end{equation*}
We can write $\phi_{ij}$ in the following fashion
 \begin{align*}
 \phi_{ij}(z_1,z_2)=\frac{1}{2\pi i}\int_{\gamma_j}\frac{\psi^{\lambda_2}(z_1)}{(\lambda_2-z_2)}d\lambda_2.
 \end{align*}
 Let $g^{\lambda_2}(z_1)=\phi^{\lambda_2}(\xi_p(1-z_1))$. From[\cite{MR4819960}, Theorem 4.3], we have the following results
\begin{enumerate}
\item[(i)] $g^{\lambda_2}(A^{\rho}_1)=\psi^{\lambda_2}(\rho T_1)$ 
 \item[(ii)] $\lVert g^{\lambda_2}(A^{\rho}_1)\rVert\lesssim\lVert \phi^{\lambda_2} \rVert_{\infty,\Delta}$, where $\phi^{\lambda_2}(z_1)=\phi(z_1,\lambda_2)$ for a fixed $\lambda_2$.
 \end{enumerate}
 So, using the above results, we have that 
\begin{align*}
 \lVert\phi_{ij}(\rho T_1,\rho T_2)\rVert&=\left\|\frac{1}{2\pi i}\int_{\gamma_j}\phi^{\lambda_2}(\rho T_1)(\lambda_2-\rho T_2)^{-1}d\lambda_2\right\|\\&=\left\|\frac{1}{2\pi i}\int_{\gamma_j}g^{\lambda_2}(A^{\rho}_1)(\lambda_2-\rho T_2)^{-1}d\lambda_2\right\|\\&\lesssim \int_{\gamma_j }\|\phi^{\lambda_2}\|_{\infty,\Delta} \|(\lambda_2-\rho T_2)^{-1}\|d\lvert\lambda_2\rvert\\&\lesssim \lVert\phi\rVert_{\infty,\Delta\times\Delta}
\end{align*}This concludes the proof of the lemma.\end{proof}
\begin{lemma}
    Let $\zeta_i,\zeta_j\notin\mathbb{T}$. Then $\lVert \phi_{ij}(\rho T_1,\rho T_2)\rVert\lesssim\lVert\phi\rVert_{\infty,\Delta\times\Delta}.$\label{zeta_ij}
\end{lemma}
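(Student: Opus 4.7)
The approach mirrors that of Lemma \ref{zeta-i,zeta-j}, with the important simplification that, since neither $\zeta_i$ nor $\zeta_j$ lies on $\mathbb{T}$, both variables can be handled symmetrically by direct resolvent estimates, bypassing any use of the transferred sectorial functional calculus of $A^1_p$ or $A^2_q$.

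The first step is to verify that the polygonal arcs $\gamma_i$ and $\gamma_j$ sit at a strictly positive distance from $\mathbb{T}$, and that the resolvents of $\rho T_1$ and $\rho T_2$ are uniformly bounded on them. Since $\zeta_i$ and $\zeta_j$ coincide with interior points $z_k, z_l$ of the construction, and the neighbouring vertices $d_{i-1}, d_i, d_{j-1}, d_j$ lie in $\mathbb{D}\setminus\mathbb{T}$ by their explicit definition $d_l=\tfrac{1}{2}(c_l+c_l/|c_l|)$, both arcs are compact subsets of the open disc. Combined with the containment $\sigma(\rho T_k)\subset \rho\,\overline{E_{r_k}}$ and the $\RE$-resolvent estimate of Lemma \ref{definition of RittE operator}, this will yield finite constants $M_1,M_2$ with
\[
\sup_{\rho\in(0,1),\;\lambda_1\in\gamma_i}\lVert R(\lambda_1,\rho T_1)\rVert\leq M_1,\qquad \sup_{\rho\in(0,1),\;\lambda_2\in\gamma_j}\lVert R(\lambda_2,\rho T_2)\rVert\leq M_2.
\]

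The plan is then to apply the commuting Dunford--Riesz functional calculus to write
\[
\phi_{ij}(\rho T_1,\rho T_2)=\frac{1}{(2\pi i)^2}\int_{\gamma_i}\int_{\gamma_j}\phi(\lambda_1,\lambda_2)\,R(\lambda_1,\rho T_1)\,R(\lambda_2,\rho T_2)\,d\lambda_2\,d\lambda_1,
\]
and then to pass to norms under the integral. Using the uniform resolvent bounds above, the finite length of the arcs, and the inclusion $\gamma_i\times\gamma_j\subset\overline{\Delta}\times\overline{\Delta}$, we obtain
\[
\lVert\phi_{ij}(\rho T_1,\rho T_2)\rVert\leq \frac{M_1M_2\,\ell(\gamma_i)\,\ell(\gamma_j)}{(2\pi)^2}\,\lVert\phi\rVert_{\infty,\gamma_i\times\gamma_j}\lesssim \lVert\phi\rVert_{\infty,\Delta\times\Delta},
\]
which is the required estimate.

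The main delicate point is the verification of the uniform-in-$\rho$ resolvent estimates. This rests on the geometry underlying the construction of $\Delta_0\subset\Delta$: the sector angle $\beta_i<\tfrac{\pi}{2}$ at the interior vertex $\zeta_i$, together with the $\RE$-property of $T_1$, ensures that $\gamma_i$ approaches $\zeta_i$ from inside the resolvent set of $\rho T_1$ uniformly in $\rho\in(0,1)$; and the product bound $\lVert R(z,T_1)\rVert\lesssim \prod_j|\xi_j-z|^{-1}$ remains controlled on $\gamma_i$ since $\gamma_i$ stays bounded away from $E$. All remaining steps are routine norm estimates analogous to the last display in the proof of Lemma \ref{zeta-i,zeta-j}.
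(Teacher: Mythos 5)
Your proposal is correct and follows essentially the same route as the paper: write $\phi_{ij}(\rho T_1,\rho T_2)$ by the commuting Dunford--Riesz functional calculus and bound the resulting double integral directly, using that $\gamma_i$ and $\gamma_j$ stay a positive distance from $\sigma(\rho T_1)$ and $\sigma(\rho T_2)$. One caveat about your justification of the uniform-in-$\rho$ resolvent bound: the $\RE$-resolvent estimate of Lemma \ref{definition of RittE operator} applies only for $z\in D(0,2)\setminus\overline{E_s}$, whereas the arc $\gamma_i$ sits near the \emph{interior} vertex $\zeta_i\in\mathbb{D}$, which will typically lie inside $\overline{E_s}$ once $s>|\zeta_i|$, so that estimate does not apply on $\gamma_i$. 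The uniform bound instead follows from the elementary fact that $\gamma_i$ is a compact subset of the resolvent set of $T_1$ (since $\sigma(T_1)\subset\Sigma(\zeta_i,\theta')$ excludes $\zeta_i$), that $\rho\mapsto\rho T_1$ is norm continuous on $[\rho_0,1]$, and that for small $\rho$ the spectrum $\rho\sigma(T_1)$ retreats toward the origin away from $\gamma_i$; this is exactly what the paper's terse remark ``$d(\gamma_i,\sigma(\rho T_1))>0$'' is invoking.
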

\begin{proof}
    Note that
    \begin{equation}
        \phi_{ij}(z_1,z_2)=\frac{1}{(2\pi i)^2}\int_{\gamma_i}\int_{\gamma_j}\frac{\phi(\lambda_1,\lambda_2)}{(\lambda_1-z_1)(\lambda_2-z_2)}d\lambda_1 d\lambda_2 .\label{phi 22}
    \end{equation}
Applying Dunford-Riesz functional calculus and Fubini's theorem, we have that
\begin{equation}
    \phi({\rho T_1,\rho T_2)}=\frac{1}{(2\pi i)^2}\int_{\gamma_i}\int_{\gamma_j}\phi(\lambda_1,\lambda_2)({\lambda_1}I-\rho T_1)^{-1}({\lambda_2I-\rho T_2)^{-1}} d\lambda_1 d\lambda_2
\end{equation}
 Since, $d(\gamma_i,\sigma(\rho T_1))>0$ and $d(\gamma_j,\sigma(\rho T_2))>0$ , it follows that $$\lVert \phi({\rho T_1,\rho T_2)}\rVert\lesssim \lVert \phi\rVert_{\infty,\Delta \times \Delta}.$$
 This concludes the proof of the lemma.
\end{proof}
\begin{proof}[Proof of Theorem \ref{transferprin}]
    Let $(A^1_i,A^2_j)$ has bounded $H^{\infty}(\Sigma_{\theta_{i}}\times\Sigma_{\theta_{j}})$- functional calculus. We may choose $\theta\in(0,\frac{\pi}{2})$ close enough to $\frac{\pi}{2}$ so that  $(A^1_{i},A^2_j)$ admits bounded $H^{\infty}(\Sigma_{\theta}\times \Sigma_{\theta})$ functional calculus for all
    $i,j\in\{1,2,\dots,N\}.$ In view of Proposition \ref{polygonally bounded}, the proof of this theorem reduces to proving that $(T_1,T_2)$ admits a joint polygonally bounded. Let $\phi\in\mathbb{C}[z_1,z_2]$ such that $\phi(0,0)=0$. From equation \ref{phi 22}, we have \begin{equation*} 
    \phi_{ij}(z_1,z_2)=(\frac{1}{2\pi i})^2 \int_{\gamma_i}\int_{\gamma_j}\frac{\phi(\lambda_1,\lambda_2)}{(\lambda_1-z_1)(\lambda_2-z_2)}d\lambda_1 d\lambda_2,
    \end{equation*}
   where  $\gamma_i,\gamma_j$ define as before for all $i,j\in\{1,2,\dots,N\}.$ Let the polygon $\Delta$ and $\Delta^0$ be defined as before with the vertices $\{\xi_1,c_1,\dots,\xi_{m},c_{m}\}$ and $\{\xi_1,d_1,\dots,\xi_{m},d_{m}\}$ respectively.
   
   Now, there will be three possibilities
\begin{enumerate}
    \item $\xi_i,\xi_j\in \mathbb{T}$
    \item $\xi_i\in\mathbb{T}$ but $\xi_j\notin\mathbb{T}$
    \item $\xi_i\notin\mathbb{T}$ and $\xi_j\notin\mathbb{T}.$
\end{enumerate}
For case(1), from equation \ref{g_{ij}}, we have that
\begin{align*}
    g_{ij}=\phi_{ij}(\xi_p(1-z_1),\xi_q(1-z_2)).
\end{align*}
Let $A_i=A^1_p=I-\overline{\xi}_pT_1$ and $A_j=A^2_{\rho}=I-\overline{\xi}_qT_2$. 
Let us define $A^{\rho}_{i}=(1-\rho)I+\rho A_i$ and $A^\rho_j=(1-\rho)I+\rho A_j$ for all $\rho\in(0,1).$
Also, one can observe that \begin{align*}
    g_{ij}(A^{\rho}_i,A^{\rho}_j)=\phi_{ij}(\rho T_1,\rho T_2).
\end{align*}

Applying Lemma \ref{g_ij}, we have
\begin{equation}
\lVert \phi_{ij}(\rho T_1,\rho T_2)\rVert\lesssim\lVert\phi_{ii}\rVert_{\infty,\Delta\times\Delta}.\label{phi-ij}
 \end{equation}
 For case (2), from Lemma \ref{zeta_ij} , we obtain that
 \begin{equation}
     \lVert\phi_{ij}(\rho T_1,\rho T_2)\rVert\lesssim\lVert\phi\rVert_{{\infty,\Delta\times\Delta}}.\label{phi1}
 \end{equation} 
 For case (3), by Lemma \ref{zeta-i,zeta-j} , it follows that
 \begin{equation}
     \lVert\phi_{ij}(\rho T_1,\rho T_2)\rVert\lesssim\lVert\phi\rVert_{\infty,\Delta\times\Delta}.\label{phi2}
 \end{equation}
  Applying Cauchy's theorem, it follows that \begin{equation}
     \lVert \phi(\rho T_1,\rho T_2)\rVert\lesssim\sum\limits_{1\leq i,j\leq m}\lVert\phi_{ij}(\rho T_1,\rho T_2)\rVert.\label{phi3}
 \end{equation}
 Combining the inequalities in \ref{phi1}, \ref{phi2} and \ref{phi3}, we obtain 
 \begin{equation*}
    \lVert \phi(\rho T_1,\rho T_2)\rVert\lesssim\lVert\phi\rVert_{\infty,\Delta\times\Delta}.
 \end{equation*}
 Taking $\rho\rightarrow 1$, we have that \begin{equation}
\lVert\phi(T_1,T_2)\rVert\lesssim\lVert\phi\rVert_{\infty,\Delta\times\Delta}.\label{phi}
\end{equation}
 Let $\phi\in\mathbb{C}[z_1,z_2]$ , and assume that $\tilde\phi(z_1,z_2)=\phi(z_1,z_2)-\phi(0,0)$. Therefore, $\tilde\phi(T_1,T_2)=\phi(T_1,T_2)-\phi(0,0)I.$
 By the inequality in \ref{phi}, we have that \begin{align*}
\lVert \tilde\phi(T_1,T_2)\rVert\lesssim\lVert\tilde\phi\rVert_{\infty,\Delta\times\Delta}.
\end{align*}
Therefore, $\lVert\phi(T_1,T_2)\rVert\lesssim\lVert\phi\rVert_{\infty,\Delta\times\Delta}$ for all $\phi\in\mathbb{C}[z_1,z_2].$

Conversely, suppose $(T_1,T_2)$ admits a bounded $H^{\infty}(E_{s_1}\times E_{s_2})$-functional calculus. By choosing $s\in(0,1)$, sufficiently close enough to $1,$ we ensure that $(T_1,T_2)$ admits bounded $H^{\infty}(E_s\times E_s)$- functional calculus. Observe that $E_s\subset\Sigma(\xi_j,\sin^{-1}s)$ for all $j=1,\dots,N$. For any $f\in H^{\infty}_{0}\left(\Sigma_{\sin^{-1}s}\times\Sigma_{\sin^{-1}s}\right)$ , we define \begin{equation}
    \phi(z_1,z_2)=f(1-\overline{\xi_i}z_1,1-\overline{\xi_j}z_2).
\end{equation}
Therefore, $\phi\in H^{\infty}_0(E_s\times E_s)$.
One can see that $\phi(T_1,T_2)=f(A_i,A_j)$.
Let us consider the sets $E^i_s=\{1-\overline{\xi}_i z: z \in E_s\}$ and $E^j_s=\{1-\overline{\xi_j}z:z\in E_s\}.$ By the assumption, we have that
\begin{align*}
    \lVert \phi(T_1,T_2)\rVert\lesssim\lVert\phi\rVert_{\infty,E_s\times E_s}.
\end{align*}
Therefore,\[\lVert f(A_1,A_2)\rVert\lesssim\lVert f\rVert_{\infty,E^i_s\times E^j_s} \lesssim\lVert f\rVert_{\infty,\Sigma_{\sin^{-1}s}\times\Sigma_{\sin^{-1}s}}.\]
This completes the proof of the theorem.
 \end{proof}

\section{Joint dilation for commuting tuple of $\text{Ritt}_E$ operators}\label{alldithm} This section is devoted to the proof of Theorem \ref{dilationthm} and Theorem \ref{CLASS}. We begin by describing some essential tools for this purpose. 

Consider the probability space $\Omega_0 = \{ \pm 1 \}^{\mathbb{Z}}$. For any $k \in \mathbb{Z}$, denote the $k$-th coordinate function $\varepsilon_k(\omega) = \omega_k$, where $\omega = \{\omega_j\}_{j \in \mathbb{Z}} \in \Omega_0$. The sequence of random variables $(\varepsilon_k)_{k\geq0}$ is called the Rademacher sequence on the probability space $(\Omega_0, \mathbb{P})$. For $1 \leq p < \infty$, we denote $\text{Rad}_p(X)$ to be the span of $\{\varepsilon_k \otimes x_k \mid x_k \in X, k \in \mathbb{Z}\}$ in the Bochner space $L^p(\Omega_0, X)$. For $p=2$, we simply denote $\text{Rad}(X)$.

Let $M\subseteq B(X)$. We say that $M$ is $R$-bounded if there exists $C>0,$ such that for any finite sequence
$(T_k)_{k=0}^N$ in $M$ and a finite sequence $(x_k)_{k=0}^N$ in $X,$ we have
\begin{equation*}\label{R}
\Big\|\sum_{k=0}^N\varepsilon_k\otimes T_k(x_k)\Big\|_{\text{Rad(X)}}\leq C\Big\|\sum_{k=0}^N\varepsilon_k\otimes x_k\Big\|_{\text{Rad(X)}}.
\end{equation*}
An operator \( T : X \to X \) on a Banach space \( X \) is said to be \( R\text{-Ritt}_E \) if there exists \( r \in (0,1) \) such that
\[
\sigma(T) \subset \overline{E_r}
\]
and for all \( s \in (r,1) \), the set
\[
\left\{ 
R(z, T) \prod_{j=1}^N (\xi_j - z) : z \in D(0,2) \setminus \overline{E_s} 
\right\} 
\]
is \( R \)-bounded. We refer  \cite{bouabdillah2024squarefunctionsassociatedritte} for more on \( R\text{-Ritt}_E\) opertaors. The notion of $R$-sectorial operator is similarly defined (see \cite{MR3928691}).
\begin{definition}(Square Function)
Let $T: X \to X$ be a $\RE$ operator and let $\alpha>0$ .  For any $x \in X$ the square function $\|x\|_{T,\alpha}$ associated with $T$ is defined by
\[
\|x\|_{T,\alpha} = \lim_{n \to \infty} \Big\|  \sum_{k=1}^{n} k^{\alpha-\frac{1}{2}} \varepsilon_k \otimes T^{(k-1)}\prod^N_{j=1}(I -\overline{ \xi_j} {T})^\alpha x \Big\|
_{\text{Rad}(X)}.
\]
\end{definition}
If $T$ is a $\RE$ operator, then $(I - \overline{\xi_i}T)$ is a sectorial operator. Therefore, the fractional power of $(I - \overline{\xi_i}T)$ is well-defined.
Therefore,
$ \|x\|_{T,\alpha} $ is well-defined for all $x \in X$.

\begin{theorem}(Square function estimate)\label{Square function estimate}[\cite{bouabdillah2024squarefunctionsassociatedritte}, Proposition 6.3 ]
Assume that $X$ is a Banach space with finite cotype. Let $T : X \to X$ be a Ritt$_{E}$ operator of type $r \in (0,1)$ and assume that $T$ admits a bounded $H^\infty(E_s)$-functional calculus for some $s \in (r,1)$. Then for any $\alpha > 0$, we have an estimate
\[
\|x\|_{T,\alpha} \lesssim \|x\|, \quad \text{for all}\qquad x \in X.
\]
\end{theorem}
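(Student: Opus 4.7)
\emph{Plan.} I would prove the estimate by transferring the discrete $\RE$ square function to a classical square function for sectorial operators via the single-variable transfer principle of \cite{MR4819960}. That principle yields, for each $i \in \{1,\dots,N\}$, a sectorial operator $A_i := I - \overline{\xi}_i T$ of type $\omega_i \in (0,\pi/2)$ admitting a bounded $H^\infty(\Sigma_{\theta_i})$-functional calculus for some $\theta_i \in (\omega_i, \pi/2)$; the boundary singularities of $T$ at the points $\xi_i \in \mathbb{T}$ are thereby translated into the canonical singularity at $0$ of each $A_i$, where McIntosh-type square-function theory directly applies.

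\emph{Step 1: recast as a single functional-calculus expression.} I set
\[
g_k(z) = k^{\alpha - \frac{1}{2}}\, z^{k-1} \prod_{i=1}^N (1 - \overline{\xi}_i z)^\alpha,
\]
so that $g_k(T) = k^{\alpha - \frac{1}{2}}\, T^{k-1} \prod_{i=1}^N (I - \overline{\xi}_i T)^\alpha$ by the algebra-homomorphism property of $f \mapsto f(T)$ on $H^\infty(E_s)$. The factor $z^{k-1}$ gives rapid decay on $\overline{E_s}$ away from $\mathbb{T}$, and each $(1 - \overline{\xi}_i z)^\alpha$ vanishes to order $\alpha$ at $\xi_i$, so $g_k \in H_0^\infty(E_s)$; consequently $\|x\|_{T,\alpha} = \|\sum_{k\geq 1} \varepsilon_k \otimes g_k(T) x\|_{\text{Rad}(X)}$, and the task becomes bounding this Rademacher norm by $\|x\|$.

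\emph{Step 2: localize near each $\xi_i$ and invoke the sectorial square function.} Because each $A_i$ admits a bounded $H^\infty$-calculus and $X$ has finite cotype, the Cowling--Doust--McIntosh--Yagi square-function theorem, in its Banach-space Rademacher form due to Kalton--Weis and Le Merdy, supplies uniform Rademacher estimates for suitable families in $H_0^\infty(\Sigma_{\theta_i})$ applied to $A_i$. I would decompose $g_k = \sum_{i=1}^N g_k^{(i)} + r_k$, where $g_k^{(i)} \in H_0^\infty(E_s)$ is localized near $\xi_i$ via a smooth cutoff and $r_k$ is holomorphic on a neighborhood of $\overline{\mathbb{D}}$. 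Under the change of variable $\zeta = 1 - \overline{\xi}_i z$, $g_k^{(i)}(T)$ corresponds to $h_k(A_i)$ with $h_k(\zeta) \sim k^{\alpha - \frac{1}{2}}\, \xi_i^{\,k-1}\, \zeta^\alpha (1-\zeta)^{k-1}$ (times a bounded holomorphic cutoff); Kahane's contraction principle absorbs the unimodular factor $\xi_i^{\,k-1}$, and the sectorial square-function estimate for $A_i$ then controls the Rademacher sum through the discrete-to-continuous comparison
\[
\sum_{k \geq 1} k^{2\alpha - 1}\, \bigl| \zeta^\alpha (1-\zeta)^{k-1} \bigr|^2 \;\asymp\; \int_0^\infty \bigl| (t\zeta)^\alpha e^{-t\zeta} \bigr|^2\, \frac{dt}{t}, \qquad \zeta \in \Sigma_{\theta_i}.
\]
The contribution from $r_k(T)$ is trivial by Dunford--Riesz calculus on $\overline{\mathbb{D}}$.

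\emph{Main obstacle.} The chief difficulty is the localization in Step 2: for $N \geq 2$ the product $\prod_i (1 - \overline{\xi}_i z)^\alpha$ does not factor cleanly across the $N$ spectral singularities, so the cutoffs must be chosen so that each $g_k^{(i)}$ remains in $H_0^\infty(E_s)$ and, after the local change of variable, lands in $H_0^\infty(\Sigma_{\theta_i})$ in a form amenable to McIntosh's theorem, with constants uniform in $k$. Once the local estimates are established, they assemble via the triangle inequality in $\text{Rad}(X)$ and the finite cotype of $X$ to yield $\|x\|_{T,\alpha} \lesssim \|x\|$.
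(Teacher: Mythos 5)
The paper does not prove this statement: it is quoted verbatim as Proposition~6.3 of \cite{bouabdillah2024squarefunctionsassociatedritte} and used as a black box (e.g.\ in the proof of Lemma~\ref{fixedx lemma} and Theorem~\ref{dilation}). There is therefore no in-paper proof to compare against, and your proposal should be judged on its own merits and against the cited source's strategy.

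That said, your plan is broadly the correct one and is in the spirit of the Le~Merdy / Bouabdillah--Le~Merdy transfer approach: pass from the discrete $\RE$ square function to sectorial square functions for the operators $A_i = I - \overline{\xi}_i T$, and then invoke the Kalton--Weis / Cowling--Doust--McIntosh--Yagi machinery. Two places where the sketch is thinner than what an actual proof requires. First, the decomposition $g_k = \sum_i g_k^{(i)} + r_k$ via ``smooth cutoffs'' is incompatible with staying in $H_0^\infty(E_s)$; you need a genuinely holomorphic partition (for instance a Franks--McIntosh/partial-fractions type decomposition, or the Cauchy-integral localizations $\phi_{ij}$ used elsewhere in this paper in the one-variable form), and you already correctly flag this as the main obstacle, but it is where most of the work lies. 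Second, the displayed ``discrete-to-continuous comparison'' $\sum_k k^{2\alpha-1}|\zeta^\alpha(1-\zeta)^{k-1}|^2 \asymp \int_0^\infty |(t\zeta)^\alpha e^{-t\zeta}|^2\,dt/t$ is only a pointwise heuristic; to control a Rademacher sum $\|\sum_k \varepsilon_k \otimes h_k(A_i)x\|_{\operatorname{Rad}(X)}$ by a continuous square function one needs an operator-level argument --- a Kalton--Weis $R$-boundedness/reproducing-formula step, or an explicit resolvent representation of $h_k(A_i)$ against a fixed generating $F\in H_0^\infty(\Sigma_{\theta_i})$ --- rather than a scalar equivalence. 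Minor points: Kahane's contraction principle as quoted in the paper (Theorem~\ref{kahane contraction principle}) is for \emph{real} scalars, so absorbing the unimodular factors $\xi_i^{k-1}$ needs the complex version (costing a fixed constant); and the remainder $r_k(T)$ is handled not merely by Dunford--Riesz but also by observing $\|r_k\|_\infty \lesssim k^{\alpha-1/2}(1-\epsilon)^{k-1}$, giving an absolutely convergent series --- you should say so explicitly. With those repairs the outline is sound and matches what the cited reference does in substance.
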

\begin{theorem}(Kahane contraction Principle)[\cite{2}, Theorem 3.1]\label{kahane contraction principle}
    \textit{Let \( (X_n)_{n=1}^{\infty} \) be a sequence of independent symmetric  Banach space valued random variables. Then for all \( a_1, \dots, a_N \in \mathbb{R} \) and \( 1 \leq p < \infty \),}
\begin{equation}
\mathbb{E} \left\| \sum_{n=1}^{N} a_n X_n \right\|^p 
\leq \left( \max_{1 \leq n \leq N} |a_n| \right)^p 
\mathbb{E} \left\| \sum_{n=1}^{N} X_n \right\|^p.
\end{equation}
\end{theorem}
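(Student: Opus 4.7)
The plan is to reduce the inequality to the deterministic contraction principle for Rademacher sums via a two-step argument: a symmetrization step, followed by a convex-geometry step on a cube. By positive homogeneity in $(a_n)$, I may rescale so that $\max_{n\leq N}|a_n|=1$, and it then suffices to show
\[
\mathbb{E}\Bigl\|\sum_{n=1}^N a_n X_n\Bigr\|^p \leq \mathbb{E}\Bigl\|\sum_{n=1}^N X_n\Bigr\|^p
\]
whenever $|a_n|\leq 1$.

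First, I would introduce an independent Rademacher sequence $(\varepsilon_n)_{n=1}^N$ defined on a separate probability space, independent of $(X_n)$. Since each $X_n$ is symmetric and the $X_n$ are independent, the tuples $(X_n)_{n=1}^N$ and $(\varepsilon_n X_n)_{n=1}^N$ share the same joint distribution, so
\[
\mathbb{E}\Bigl\|\sum_{n=1}^N a_n X_n\Bigr\|^p
= \mathbb{E}_X\,\mathbb{E}_\varepsilon\Bigl\|\sum_{n=1}^N a_n\varepsilon_n X_n\Bigr\|^p.
\]
By Fubini, the original inequality is reduced to the pointwise deterministic estimate
\[
\mathbb{E}_\varepsilon\Bigl\|\sum_{n=1}^N a_n\varepsilon_n x_n\Bigr\|^p
\leq \mathbb{E}_\varepsilon\Bigl\|\sum_{n=1}^N \varepsilon_n x_n\Bigr\|^p
\]
for every choice of $x_1,\dots,x_N\in X$ and every real tuple $(a_n)$ with $|a_n|\leq 1$.

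Second, I would exploit convexity. Define $F:\mathbb{R}^N\to[0,\infty)$ by $F(a):=\mathbb{E}_\varepsilon\|\sum_n a_n\varepsilon_n x_n\|^p$. As an average of the convex maps $a\mapsto\|\sum_n a_n\varepsilon_n x_n\|^p$, the function $F$ is convex. Moreover, the symmetry of each $\varepsilon_n$ forces $F$ to be invariant under every sign change $a_n\mapsto-a_n$. A convex function on $[-1,1]^N$ attains its maximum at a vertex of the cube; by the sign-invariance this maximum value coincides at every vertex and equals $F(1,\dots,1)=\mathbb{E}_\varepsilon\|\sum_n\varepsilon_n x_n\|^p$. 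This gives the desired contraction and finishes the proof.

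The argument has no genuine obstacle: its only subtlety is to justify the Rademacher symmetrization from the weaker hypothesis that the $X_n$ are independent and symmetric (rather than i.i.d.), which is handled cleanly by conditioning on the full vector $(X_n)$ and invoking symmetry marginally. Once this passage to Rademacher sums is in place, the extremal-point argument on the cube requires nothing beyond convexity of the norm to the $p$-th power.
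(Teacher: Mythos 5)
The paper does not supply a proof of this statement: it is quoted as a known result (Theorem 3.1 of reference [2]), so there is no internal argument to compare against. Your proof, however, is correct and is essentially the standard textbook derivation of the Kahane contraction principle. The symmetrization step is sound: for independent symmetric $X_n$ and an independent Rademacher sequence $(\varepsilon_n)$, the vectors $(X_n)_{n\le N}$ and $(\varepsilon_n X_n)_{n\le N}$ are equidistributed (check it on each fixed sign pattern, then average), so conditioning on $X$ and applying Fubini reduces the claim to the deterministic inequality for Rademacher sums. The convexity step is also correct; one minor streamlining is that you need not invoke the Bauer maximum principle for the cube $[-1,1]^N$: since $F(a)=\mathbb{E}_\varepsilon\|\sum_n a_n\varepsilon_n x_n\|^p$ is convex and continuous, you can simply freeze all but one coordinate and use that a one-variable convex function on an interval is bounded by the larger of its endpoint values, then iterate over the $N$ coordinates and finish with the sign-invariance to identify all vertex values with $F(1,\dots,1)$. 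Either way the argument is complete and fills in a proof the paper omits.
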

Let $(a_m)_m$ be a sequence of complex numbers provided by 
\begin{equation}
    \frac{1}{\prod\limits^N_{i=1}(1-\overline\xi_jz)}=\sum^{\infty}_{m=0}a_mz^m.
\end{equation}
From [\cite{MR4861029}, Lemma  3.2] we have that the sequence $(a_m)_m$ is bounded.
\begin{theorem}[\cite{MR4819960}, Proposition 3.5]
Let $T\in B(X)$ and $T$ has bounded polygonal functional calculus iff there exist finite subset $E$ of $\mathbb{T}$ and $s\in(0,1)$ such that $T$ is $\RE$ and $T$ has bounded $H^{\infty}(E_s)$- functional calculus. \label{poly to ritt_E} 
\end{theorem}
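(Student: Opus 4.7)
The plan is to relate polygonal functional calculus and $H^{\infty}(E_s)$-functional calculus by sandwiching a convex open polygon $\Delta\subseteq\mathbb{D}$ between two sets of the form $E_{s'}\subseteq E_s$, with the vertices of $\Delta$ on $\mathbb{T}$ being exactly the points of $E$. The forward direction (starting from $\RE$ with $H^\infty$-calculus) then follows by a Lagrange interpolation decomposition of a generic polynomial, while the reverse direction rests on a geometric resolvent estimate.

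For $(\Leftarrow)$, assume $T$ is $\RE$ of type $r$ with bounded $H^{\infty}(E_s)$-functional calculus for some $s\in(r,1)$. I would construct a convex open polygon $\Delta\subseteq\mathbb{D}$ whose vertices consist of $\xi_1,\dots,\xi_N$ together with sufficiently many additional vertices on the circle $|z|=t$ for some $t\in(s,1)$, chosen so that $\overline{E_s}\subseteq\overline\Delta$ and hence $\sigma(T)\subseteq\overline\Delta$. Given $\phi\in\mathbb{C}[z]$, fix the Lagrange polynomials $L_j(z)=\prod_{i\neq j}(z-\xi_i)/(\xi_j-\xi_i)$ and decompose
\[
\phi(z)=\sum_{j=1}^N\phi(\xi_j)L_j(z)+\tilde\phi(z),
\]
where $\tilde\phi(\xi_j)=0$ for every $j$. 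Then $\tilde\phi$ is divisible by $\prod_j(z-\xi_j)$, so $\tilde\phi\in H^{\infty}_0(E_s)$ with exponents $s_j=1$, and the $H^{\infty}(E_s)$-calculus hypothesis gives $\lVert\tilde\phi(T)\rVert\lesssim\lVert\tilde\phi\rVert_{\infty,E_s}\leq\lVert\tilde\phi\rVert_{\infty,\Delta}\lesssim\lVert\phi\rVert_{\infty,\Delta}$. Since $\lVert L_j(T)\rVert$ is controlled by a constant depending only on $L_j$ and $\lVert T\rVert$, and $|\phi(\xi_j)|\leq\lVert\phi\rVert_{\infty,\Delta}$, summing yields $\lVert\phi(T)\rVert\lesssim\lVert\phi\rVert_{\infty,\Delta}$, which is the polygonal bound.

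For $(\Rightarrow)$, assume $T$ has bounded polygonal functional calculus on a convex polygon $\Delta\subseteq\mathbb{D}$ with $\sigma(T)\subseteq\overline\Delta$. Set $E=\overline\Delta\cap\mathbb{T}$. Since the interior vertices of $\Delta$ have modulus strictly less than $1$, there exists $r\in(0,1)$ such that $\overline\Delta\subseteq\overline{E_r}$. To derive the resolvent bound, for fixed $z\notin\overline\Delta$ approximate the rational function $w\mapsto(w-z)^{-1}$ uniformly on $\overline\Delta$ by polynomials (justified by Mergelyan's theorem on the convex compact set $\overline\Delta$); the polygonal bound then gives $\lVert R(z,T)\rVert\lesssim 1/d(z,\overline\Delta)$. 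A direct geometric calculation shows that for $z\in D(0,2)\setminus\overline{E_s}$ with $s\in(r,1)$, one has $d(z,\overline\Delta)\gtrsim\prod_{j=1}^N|\xi_j-z|$: near any $\xi_{j_0}$ both sides are comparable to $|z-\xi_{j_0}|$ (using that $\Delta$ makes a nondegenerate interior angle at $\xi_{j_0}$), while for $z$ bounded away from $E$ both quantities are bounded below by a positive constant. This verifies $T$ is $\RE$ of type $r$. Finally, for $f\in H^{\infty}_0(E_s)$, the Cauchy representation $f(T)=\tfrac{1}{2\pi i}\int_{\partial E_u}f(\lambda)R(\lambda,T)\,d\lambda$ can be identified with the limit of $\phi_n(T)$ for polynomial approximations $\phi_n\to f$ uniformly on $\overline\Delta$ (using the decay of $f$ at each $\xi_j$), transferring the polygonal bound to $\lVert f(T)\rVert\lesssim\lVert f\rVert_{\infty,E_s}$.

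The main technical obstacle is the geometric resolvent estimate in $(\Rightarrow)$: one must carefully verify that $d(z,\overline\Delta)\gtrsim\prod_j|\xi_j-z|$ on the annular region $D(0,2)\setminus\overline{E_s}$, which requires trigonometric control of the interior angle of $\Delta$ at each boundary vertex $\xi_j\in\mathbb{T}$ and a matching lower bound away from $E$. A secondary subtlety is the polynomial approximation step for $H^{\infty}_0(E_s)$ functions, which must exploit the prescribed decay of $f$ at the points of $E$ in order to obtain uniform convergence on $\overline\Delta$ despite $\overline\Delta$ touching $\mathbb{T}$ at those vertices.
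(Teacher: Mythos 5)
The paper states this result as a citation to [\cite{MR4819960}, Proposition 3.4] and does not supply its own proof, so I assess your argument on its own merits and against the paper's treatment of the closely related Proposition~\ref{polygonally bounded}. Your $(\Leftarrow)$ direction is correct and is essentially the same Lagrange-interpolation decomposition the paper uses there: writing $\phi = \sum_j \phi(\xi_j) L_j + \tilde\phi$ with $\tilde\phi(\xi_j)=0$, the remainder is divisible by $\prod_j(z-\xi_j)$, hence lies in $H^\infty_0(E_s)$, and the two pieces are estimated separately. The resolvent estimate in $(\Rightarrow)$ via Mergelyan approximation of $(w-z)^{-1}$ and the geometric comparison $d(z,\overline\Delta)\gtrsim\prod_j|\xi_j-z|$ on $D(0,2)\setminus\overline{E_s}$ is also sound, the essential point (worth making explicit) being that once $\overline\Delta\subseteq\overline{E_r}$ for some $r<s$, the wedge of $\Delta$ at each $\xi_j$ is strictly narrower than that of $E_s$, which supplies the angular separation driving the near-vertex bound.

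The genuine gap is in the closing step of $(\Rightarrow)$. You claim that the contour integral $f(T)=\frac{1}{2\pi i}\int_{\partial E_u}f(\lambda)R(\lambda,T)\,d\lambda$ can be identified with the limit of $\phi_n(T)$ for polynomials $\phi_n\to f$ uniformly on $\overline\Delta$. Uniform convergence on $\overline\Delta$ together with the polygonal bound does make $(\phi_n(T))_n$ Cauchy, hence convergent, but it does not show the limit equals $f(T)$: the contour $\partial E_u$ lies outside $\overline\Delta$, $\|R(\lambda,T)\|$ blows up like $\prod_j|\xi_j-\lambda|^{-1}$ near the vertices, and $\phi_n-f$ has no decay at the $\xi_j$, so $\int_{\partial E_u}|\phi_n-f|\,\|R(\cdot,T)\|\,d|\lambda|$ is not controlled by $\|\phi_n-f\|_{\infty,\overline\Delta}$. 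The fix is the shrinking argument used in the converse direction of Proposition~\ref{polygonally bounded}: replace $T$ by $tT$ for $t\in(0,1)$, so that $\sigma(tT)$ is compactly contained in $E_s$ and the resolvent is bounded along a contour strictly inside $E_s$; approximate $f$ uniformly on a compact set $t'\overline{E_s}$ with $t<t'<1$ (as in [\cite{MR4043874}, Lemma 2.4]) so that $\phi_m(tT)\to f(tT)$ by Dunford--Riesz; obtain $\|f(tT)\|\lesssim\|f\|_{\infty,E_s}$ from the polygonal bound applied to $\phi_m\circ(t\,\cdot)$; and finally let $t\to 1$ via dominated convergence to recover $f(T)$. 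As written, your identification of the limit is unjustified.
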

The proof of the following theorem is similar to that in [\cite{MR4861029}, Lemma 3.3], so we omit it.
\begin{lemma}
    Let $X$ be a Banach space with finite cotype and $T\in B(X)$, has polygonally bounded then for any $ x\in \overline{Ran}\left(\prod\limits^N_{i=1}(I_{X}-\overline{\xi_i}T)\right)$,
    we have $$\sum^{\infty}_{m=0}a_mT^m\prod^N_{j=1}(I_{X}-\overline{\xi_j}T)x=x.$$\label{fixedx lemma}
\end{lemma}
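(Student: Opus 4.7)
The plan is to turn the statement into a polynomial identity and a density argument. Writing $q_n(z)=\sum_{m=0}^n a_m z^m$ and $P(z)=\prod_{j=1}^N(1-\overline{\xi_j}z)$, the power series identity $P(z)\sum_{m\ge 0} a_m z^m=1$ gives the polynomial identity $q_n(z)P(z)=1-R_n(z)$ for a remainder polynomial $R_n$. To obtain an explicit formula for $R_n$, I would first do a partial fraction decomposition $\tfrac{1}{P(z)}=\sum_{j=1}^N \tfrac{c_j}{1-\overline{\xi_j}z}$, which yields $a_m=\sum_{j=1}^N c_j\overline{\xi_j}^{\,m}$; summing the geometric tails then gives
\[
R_n(z)=\sum_{j=1}^N c_j\,(\overline{\xi_j})^{n+1}\,z^{n+1}\,P_j(z),\qquad P_j(z):=\prod_{i\ne j}(1-\overline{\xi_i}z).
\]
Because $q_n(T)P(T)x=x-R_n(T)x$, the lemma is reduced to proving $R_n(T)x\to 0$ for every $x\in\overline{\text{Ran}}(P(T))$.

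The next step is the uniform boundedness of $\{R_n(T)\}_n$. Polygonal boundedness gives $\|R_n(T)\|\lesssim \|R_n\|_{\infty,\Delta}$, and on $\Delta\subset\overline{\mathbb{D}}$ each summand of $R_n$ is dominated by $|c_j|\,|z|^{n+1}|P_j(z)|\le |c_j|\cdot 2^{N-1}$, so $\|R_n\|_{\infty,\Delta}$ is bounded independently of $n$. Thanks to this uniform bound and a standard $\epsilon/3$-argument, it suffices to prove $R_n(T)P(T)y\to 0$ for an arbitrary $y\in X$. Using commutativity to write $P_j(T)P(T)=P(T)P_j(T)$, I would rewrite
\[
R_n(T)P(T)y=\sum_{j=1}^N c_j\,(\overline{\xi_j})^{n+1}\,T^{n+1}P(T)\,w_j,\qquad w_j:=P_j(T)y.
\]

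It thus remains to prove $T^{k}P(T)w\to 0$ (in norm) as $k\to\infty$ for every $w\in X$. Here the square function estimate enters. By Theorem \ref{poly to ritt_E}, polygonal boundedness of $T$ is equivalent to $T$ being $\RE$ with a bounded $H^\infty(E_s)$-functional calculus, so Theorem \ref{Square function estimate} with $\alpha=1$ gives
\[
\Big\|\sum_{k=1}^n k^{1/2}\,\varepsilon_k\otimes T^{k-1}P(T)w\Big\|_{\text{Rad}(X)}\lesssim \|w\|.
\]
Because $X$ has finite cotype $q$, the lower cotype inequality yields $\sum_{k\ge 1} k^{q/2}\|T^{k-1}P(T)w\|^q<\infty$; in particular $\|T^{k-1}P(T)w\|\to 0$. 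Applying this to each $w_j$, and noting that $|(\overline{\xi_j})^{n+1}|=1$, every one of the finitely many summands of $R_n(T)P(T)y$ tends to zero, so $R_n(T)P(T)y\to 0$ and the lemma follows.

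The main obstacle is precisely this last pointwise decay: at the spectral points $\xi_j\in\sigma(T)\cap\mathbb{T}$ the individual powers $T^n$ need not decay, and the argument works only because $P(T)$ annihilates the relevant boundary part of the spectrum in the quantitative sense measured by the square-function seminorm. Converting the Rademacher-norm bound into a pointwise (actually summable) norm bound is exactly where the finite cotype hypothesis is essential; everything else is algebraic manipulation and soft functional calculus.
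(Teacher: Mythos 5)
Your proof is correct and follows the same overall strategy as the paper: express $q_n(T)P(T)x = x - R_n(T)x$, establish a uniform bound on $\lVert R_n(T)\rVert$ via the polygonal functional calculus, reduce by density to $x=P(T)y$, and show $T^kP(T)w\to 0$ using the square function estimate (with $\alpha=1$) together with finite cotype. The one spot where you diverge from the paper is in how the uniform bound on the remainder is obtained: you compute an explicit closed form $R_n(z)=\sum_j c_j\,\overline{\xi_j}^{\,n+1}z^{n+1}P_j(z)$ from the partial fraction decomposition of $1/P$, and bound it directly on $\Delta\subset\overline{\mathbb{D}}$, whereas the paper works with the Cauchy-product coefficients $\gamma_{r,k}$ of $q_k(z)P(z)$ and appeals to the boundedness of the Taylor coefficients $(a_m)$ of $1/P$ (quoted from an external lemma). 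Your version is self-contained and arguably cleaner since it avoids that external input, and it also makes the final cotype step sharper: converting the Rademacher bound into the $\ell^q$-summability $\sum_k k^{q/2}\lVert T^{k-1}P(T)w\rVert^q<\infty$ makes the pointwise decay $\lVert T^{k}P(T)w\rVert\to 0$ immediate, which is the step the paper treats more loosely.
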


\begin{theorem}[\cite{MR77411}, Theorem 1.3]Let $T\in B(X)$, where $X$ is a reflexive Banach space. 
    If $T$ is power bounded, then the following conditions hold
    \begin{enumerate}
        \item The space $X$ admits the decomposition
             $$X=\text{\text{Ker}}(I_{X}-T)\oplus\overline{Ran}(I_{X}-T)$$
    \item For any $z\in\overline{Ran}(I_{X}-T)$ and any $y\in \text{\text{Ker}}(I_{X}-T)$ we have $$\langle z,y\rangle=0.$$
   \end{enumerate}\label{Mean ergodic theorem 4}
\end{theorem}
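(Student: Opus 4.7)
The plan is to follow the classical Lorch--von~Neumann mean ergodic strategy based on the Ces\`aro averages $M_n := \frac{1}{n}\sum_{k=0}^{n-1} T^k$. Since $T$ is power bounded, say $\|T^k\| \leq C$ for all $k \geq 0$, the family $(M_n)$ is uniformly bounded in $B(X)$.

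The first step I would carry out is to show that $M_n y \to 0$ in norm for every $y \in \overline{Ran}(I_X - T)$. On the dense subspace $Ran(I_X - T)$, the telescoping identity $M_n(x - Tx) = \frac{1}{n}(x - T^n x)$ yields a norm estimate of order $1/n$, and uniform boundedness of $(M_n)$ extends the conclusion to the closure. As an immediate corollary, $\text{Ker}(I_X - T) \cap \overline{Ran}(I_X - T) = \{0\}$: any element $y$ of the intersection satisfies $M_n y = y$ for all $n$, hence $y = 0$.

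For the decomposition in (1), fix $x \in X$. Boundedness of $(M_n x)$ and reflexivity of $X$ provide a subsequence $M_{n_k} x \rightharpoonup y$ weakly. Since $T M_n x - M_n x = \frac{1}{n}(T^n x - x) \to 0$ in norm and $T$ is weak-to-weak continuous, I would deduce $Ty = y$, so $y \in \text{Ker}(I_X - T)$. To finish, I would show $x - y \in \overline{Ran}(I_X - T)$ by contradiction: otherwise Hahn--Banach supplies $x^* \in X^*$ with $x^*(x - y) = 1$ that annihilates $\overline{Ran}(I_X - T)$, which forces $T^* x^* = x^*$. Then $x^*(M_n x) = x^*(x)$ for every $n$, and passing to the weak limit along $(n_k)$ gives $x^*(y) = x^*(x)$, contradicting $x^*(x - y) = 1$. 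Combined with the trivial intersection above, this establishes (1).

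The main point I expect to require some care is statement (2), since as written the pairing $\langle z, y \rangle$ is not literally defined for two elements of a general reflexive Banach space. The natural reading is via the canonical $X$--$X^*$ duality after applying the same mean ergodic argument to the adjoint $T^*$ on the reflexive dual $X^*$: this yields $X^* = \text{Ker}(I_{X^*} - T^*) \oplus \overline{Ran}(I_{X^*} - T^*)$, and the standard annihilator identity gives $\overline{Ran}(I_X - T)^{\perp} = \text{Ker}(I_{X^*} - T^*)$. The orthogonality assertion then becomes the statement that every $y^* \in \text{Ker}(I_{X^*} - T^*)$ annihilates every $z \in \overline{Ran}(I_X - T)$, which is precisely the annihilator identity just noted. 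Reconciling the stated pairing with this dual interpretation is the only delicate point; the rest of the proof is the textbook Cesàro--Hahn--Banach argument enabled by reflexivity.
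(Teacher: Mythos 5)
The paper does not prove this theorem itself; it simply cites it from the reference \cite{MR77411} (Lorch's mean ergodic theorem) and uses it as a black box. Your Ces\`aro-average argument is the standard proof of that result and is correct: the telescoping identity plus uniform boundedness of $M_n = \frac{1}{n}\sum_{k=0}^{n-1}T^k$ gives $M_n\to 0$ on $\overline{\mathrm{Ran}}(I_X-T)$ and hence trivial intersection with the kernel, reflexivity yields a weak subsequential limit $y$ of $(M_nx)$ with $Ty=y$, and the Hahn--Banach argument places $x-y$ in $\overline{\mathrm{Ran}}(I_X-T)$. You also correctly flagged the one genuinely delicate point: as written, assertion (2) pairs two elements of $X$, which is not meaningful in a general reflexive Banach space; the intended reading (and the one the paper actually uses in its Remark \ref{orthogonality}) is the $X$--$X^*$ duality with $y\in\mathrm{Ker}(I_{X^*}-T^*)$, where (2) reduces to the elementary annihilator identity $\overline{\mathrm{Ran}}(I_X-T)^{\perp}=\mathrm{Ker}(I_{X^*}-T^*)$. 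So the ``$y\in\mathrm{Ker}(I_X-T)$'' in the paper's statement is best regarded as a misprint, and your reconciliation is the correct one.
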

In a manner analogous to proof of [\cite{MR4861029}, Lemma 3.4], we can establish the following lemma.
\begin{lemma}
   Let $T$ be $\RE$ operator. Suppose $X$ is reflexive Banach space then the following decompositions hold
    \begin{equation}
        X=\bigoplus^N_{j=1}\text{\text{Ker}}(I_{X}-\overline{\xi_j}T)\bigoplus\overline{Ran}\left(\prod\limits^N_{j=1}(I_{X}-\overline{\xi_j}T)\right)\label{Mean ergodic theorem 1}
    \end{equation}
    \begin{equation}
        X^*=\bigoplus^N_{j=1}\text{\text{Ker}}(I_{X}-{\xi_j}T^*)\bigoplus\overline{Ran}\left(\prod^N_{j=1}(I_{X}-{\xi_j}T^*)\right)\label{Mean ergodic theorem 2}
    \end{equation}
    \begin{equation}
        \overline{Ran}(I_X-\overline{\xi_1}T)=\bigoplus^{N}_{j=2}\text{\text{Ker}}(I_X-\overline{\xi_j}T)\bigoplus\overline{Ran}\left(\prod^N_{j=1}(I_X-\overline{\xi_j}T)\right)
    \end{equation}
\end{lemma}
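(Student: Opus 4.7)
The plan is to establish all three decompositions by iterating the mean ergodic theorem (Theorem \ref{Mean ergodic theorem 4}). First I would verify that $\overline{\xi_j}T$ is power bounded on $X$ for each $j$; this is immediate from the fact that $\RE$ operators are power bounded (a standard consequence of the resolvent estimates, proved in \cite{MR4819960}). Since $X$ is reflexive, Theorem \ref{Mean ergodic theorem 4} applied to each $\overline{\xi_j}T$ yields the first-level splitting $X = \text{Ker}(I - \overline{\xi_j}T) \oplus \overline{Ran}(I - \overline{\xi_j}T)$.

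The crucial algebraic observation is that for $j \neq k$ the eigenspaces are in general position: if $y \in \text{Ker}(I - \overline{\xi_j}T)$, then $Ty = \xi_j y$, so $(I - \overline{\xi_k}T)y = (1 - \overline{\xi_k}\xi_j)y$, and since $\overline{\xi_k}\xi_j \neq 1$ (as $\xi_j \neq \xi_k$ on $\mathbb{T}$), $y$ lies in the range of $I - \overline{\xi_k}T$. Thus $\text{Ker}(I - \overline{\xi_j}T) \subseteq \overline{Ran}(I - \overline{\xi_k}T)$ whenever $j \neq k$. Since $T$ commutes with each factor, $\overline{Ran}(I - \overline{\xi_1}T)$ is $T$-invariant and $\overline{\xi_2}T$ restricted to it remains power bounded. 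Iterating the mean ergodic theorem on these nested invariant subspaces, after $N$ steps I obtain
\[
X = \bigoplus_{j=1}^N \text{Ker}(I - \overline{\xi_j}T) \oplus Z, \qquad Z := \bigcap_{j=1}^N \overline{Ran}(I - \overline{\xi_j}T).
\]

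The main obstacle is identifying $Z$ with $\overline{Ran}\bigl(\prod_{j=1}^N (I - \overline{\xi_j}T)\bigr)$. The inclusion $\overline{Ran}\bigl(\prod_j(I-\overline{\xi_j}T)\bigr) \subseteq Z$ is immediate; for the reverse I would argue by Hahn--Banach. If $\phi \in X^*$ annihilates $Ran\bigl(\prod_j(I - \overline{\xi_j}T)\bigr)$, then $\phi \in \text{Ker}\bigl(\prod_j(I - \xi_j T^*)\bigr)$. To exploit this I first establish the dual decomposition in parallel: since $\sigma(T^*) = \sigma(T)$ and $R(z,T^*) = R(\overline{z},T)^*$, the adjoint $T^*$ is itself $\text{Ritt}_{\overline{E}}$, and $X^*$ is reflexive (as $X$ is), so the same iterative ergodic argument applied to $T^*$ yields $X^* = \bigoplus_j \text{Ker}(I - \xi_j T^*) \oplus \overline{Ran}\bigl(\prod_j(I - \xi_j T^*)\bigr)$. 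In particular $\text{Ker}\bigl(\prod_j(I - \xi_j T^*)\bigr) = \bigoplus_j \text{Ker}(I - \xi_j T^*)$. Writing $\phi = \sum_j \phi_j$ with $T^*\phi_j = \xi_j \phi_j$, each $\phi_j$ annihilates $Ran(I - \overline{\xi_j}T)$, hence $\phi_j(x) = 0$ for any $x \in Z$, giving $\phi(x) = 0$ as required.

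Finally, the third decomposition follows from exactly the same iterative machinery, but restricted to the invariant subspace $\overline{Ran}(I - \overline{\xi_1}T)$: I apply the mean ergodic theorem to $\overline{\xi_j}T$ for $j = 2, \ldots, N$, successively on the resulting nested subspaces. The inclusion $\text{Ker}(I - \overline{\xi_j}T) \subseteq \overline{Ran}(I - \overline{\xi_1}T)$ for $j \geq 2$, established above, guarantees that the kernels on the restricted subspace coincide with the original kernels, and the residual term is identified with $\overline{Ran}\bigl(\prod_{j=1}^N(I - \overline{\xi_j}T)\bigr)$ by the same Hahn--Banach duality argument.
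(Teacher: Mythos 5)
Your approach is correct in outline and is almost certainly the same one the paper intends: the paper provides no explicit proof here (it defers to the analogous Lemma 3.4 of \cite{MR4861029}), but it states Theorem~\ref{Mean ergodic theorem 4} immediately beforehand and labels the displayed equations ``Mean ergodic theorem,'' so iterating the Yosida--Kakutani mean ergodic decomposition over the points of $E$, as you do, is exactly the intended route. The key algebraic input --- that $\mathrm{Ker}(I-\overline{\xi_j}T)\subseteq \mathrm{Ran}(I-\overline{\xi_k}T)$ for $j\neq k$, because $(I-\overline{\xi_k}T)$ acts on that kernel as multiplication by $1-\overline{\xi_k}\xi_j\neq 0$ --- is also exactly what is needed, and the induction you sketch does produce $X=\bigoplus_j\mathrm{Ker}(I-\overline{\xi_j}T)\oplus\bigcap_j\overline{\mathrm{Ran}}(I-\overline{\xi_j}T)$.

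Two points deserve a closer look. First, your justification for the dual decomposition is not correct as written: for a Banach-space adjoint one has $R(z,T^*)=R(z,T)^*$ (with no conjugation of $z$), so $\sigma(T^*)=\sigma(T)$ and $T^*$ is $\RE$ with the \emph{same} set $E$, not $\mathrm{Ritt}_{\overline E}$; accordingly the adjoint of $I-\overline{\xi_j}T$ is $I-\overline{\xi_j}T^*$, and the ergodic decomposition of $X^*$ should be expressed in terms of $\mathrm{Ker}(I-\overline{\xi_j}T^*)$. (The paper's own statement of the lemma writes $I-\xi_jT^*$, and its Theorem~\ref{Mean ergodic theorem 4} part (2) pairs elements of $X$ with elements of $\mathrm{Ker}(I-T)$ rather than $\mathrm{Ker}(I-T^*)$, so you have inherited what looks like a conjugation typo from the source rather than introduced one --- but the formula $R(z,T^*)=R(\overline z,T)^*$ you invoke is specifically a Hilbert-space identity and should not appear in a Banach-space argument.) Second, the Hahn--Banach step can be bypassed: one checks directly by induction that the residual subspace produced at stage $k$ equals $\overline{(I-\overline{\xi_k}T)\cdots(I-\overline{\xi_1}T)X}$, using that $\overline{S\,\overline{M}}=\overline{SM}$ for any bounded $S$ and subspace $M$, so the final residual is literally $\overline{\mathrm{Ran}}\bigl(\prod_j(I-\overline{\xi_j}T)\bigr)$ without any duality argument. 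If you do keep the Hahn--Banach route, the claim $\mathrm{Ker}\bigl(\prod_j(I-\overline{\xi_j}T^*)\bigr)=\bigoplus_j\mathrm{Ker}(I-\overline{\xi_j}T^*)$ needs a short justification (peel off one factor at a time and use $\mathrm{Ker}(I-\overline{\xi_k}T^*)\cap\overline{\mathrm{Ran}}(I-\overline{\xi_k}T^*)=\{0\}$). Finally, power boundedness of every $\overline{\xi_j}T$ is indeed needed for the mean ergodic theorem and should be sourced explicitly to \cite{MR4819960} rather than asserted as folklore.
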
\label{Mean ergodic theorem 3}
Further, we can see that $N+1$ projections associated with the decomposition of $X$ belong to the set of bicommutants of $T$.
\begin{remark}
    From the above lemma, it follows that any $x\in X$ admits a unique decomposition $x=x_1+x_2+\dots+x_{N+1}$ where $x_i\in \text{Ker}(I_X-\overline{\xi_i}T)$ for $i=1,2,\dots,N$ and $x_{N+1}\in\overline{Ran}\left(\prod\limits^N_{i=1}(I_X-\overline{\xi_i}T)\right)$ . Any $y\in X^*$, $y$ can be uniquely written as 
    $y=y_1+y_2+\dots+y_{N+1}$ where $y_i\in \text{Ker}(I_{X}-\xi_jT^*)$ for all $i=1,2,\dots,N$ and $y_{N+1}\in\overline{Ran}\left(\prod\limits^N_{i=1}(I_X-\xi_iT^*)\right)$.
    By applying Theorem \ref{Mean ergodic theorem 4}, one can easily check that $\langle x_j,y_i\rangle=0$ where $1\leq i\neq j\leq N+1$.\label{orthogonality}
\end{remark}
\begin{theorem}\label{dilation}
     Let $X$ be a reflexive Banach space such that both $X$ and $X^*$ have finite cotype. Let $T\in B(X)$ be $\RE$ operator with bounded $H^{\infty}(E_s)$- functional calculus. Then, for any $p\in(1,\infty)$, there exist two bounded operators $J:X\rightarrow L^p(\Omega,X)$ , $Q:L^p(\Omega,X)\rightarrow X$ and an isometry $V:L^p(\Omega,X)\rightarrow L^p(\Omega,X)$ such that $T^n=QV^nJ$ for all $n\geq0$ .
\end{theorem}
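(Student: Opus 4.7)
The plan is to combine the mean ergodic decomposition of $X$ with the square function estimate for $T$ and $T^*$, following the Ritt-operator template of \cite{MR3928691} adapted to several spectral points in $E$.

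Since $X$ is reflexive, the mean ergodic decomposition (the lemma preceding Remark \ref{orthogonality}) gives a $T$-invariant direct sum
$$X = \bigoplus_{j=1}^{N} K_j \oplus Y, \qquad K_j = \text{Ker}(I_X - \bar\xi_j T),\ \ Y = \overline{Ran}\Bigl(\prod_{j=1}^{N}(I_X - \bar\xi_j T)\Bigr).$$
On each $K_j$ the operator $T$ acts as multiplication by the scalar $\xi_j \in \mathbb{T}$, which is already an isometry, so dilation on $K_j$ is trivial: take $V_j = \xi_j I_{K_j}$ and $J_j = Q_j = I_{K_j}$. The nontrivial task is to dilate $T|_Y$, for which Lemma \ref{fixedx lemma} supplies the reconstruction identity $y = \sum_{m \geq 0} a_m T^m \prod_j(I_X - \bar\xi_j T) y$ on $Y$, with $(a_m)$ the uniformly bounded Taylor coefficients of $1/\prod_j(1 - \bar\xi_j z)$.

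On $Y$, set $A := \prod_{j=1}^N (I_X - \bar\xi_j T)^{1/2}$. Theorem \ref{Square function estimate} applied with $\alpha = 1/2$, together with the Kahane--Khintchine equivalence on Rademacher sums, shows that
$$J_Y y := \sum_{k \geq 1} \varepsilon_k \otimes T^{k-1} A y$$
defines a bounded map $Y \to L^p(\Omega_0, X)$, where $\Omega_0 = \{\pm 1\}^{\mathbb{Z}}$. Let $V$ be the canonical isometric shift on $L^p(\Omega_0, X)$ induced by translation on $\Omega_0$, so that $V \varepsilon_k = \varepsilon_{k+1}$. Because $X$ is reflexive and $X^*$ has finite cotype, $T^*$ is $\RE$ on $X^*$ with bounded $H^\infty(E_s)$-calculus, and the analogous dual square function estimate furnishes a bounded companion map $\widetilde J_Y : Y^* \to L^{p'}(\Omega_0, X^*)$ built from factors $T^{*(k-1)} A^*$ together with suitable Taylor-coefficient weights, where $A^* = \prod_j (I_{X^*} - \xi_j T^*)^{1/2}$. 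Define $Q_Y : L^p(\Omega_0, X) \to Y$ by duality, $\langle Q_Y \phi, z\rangle := \langle \phi, \widetilde J_Y z\rangle$ for $z \in Y^*$; reflexivity guarantees $Q_Y \phi \in Y^{**} \simeq Y$.

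The identity $T^n y = Q_Y V^n J_Y y$ is verified by expanding the Rademacher pairing $\langle V^n J_Y y, \widetilde J_Y z\rangle$ using the orthogonality $\int \varepsilon_k \varepsilon_\ell = \delta_{k,\ell}$, the commutativity of $A$ with $T$, and finally the reconstruction identity of Lemma \ref{fixedx lemma}, which collapses the resulting series into $\langle T^n y, z\rangle$. The global dilation is assembled as a direct sum of the trivial pieces on the $K_j$'s and the one on $Y$ on $\Omega := \Omega_0 \sqcup \{1,\ldots,N\}$. The main obstacle is the precise calibration of the Taylor-coefficient weights inside $\widetilde J_Y$: they must balance boundedness (ensured by the dual square function estimate and boundedness of $(a_m)$) against the requirement that the Rademacher pairing reproduce $\sum_m a_m T^{m+n} A^2 y = T^n y$ exactly. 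Reflexivity of $X$ and finite cotype of both $X$ and $X^*$ are essential here, since without them either the dual square function estimate fails or the duality construction of $Q_Y$ breaks down.
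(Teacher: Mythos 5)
Your overall strategy matches the paper's: decompose $X$ by the mean ergodic lemma, handle the eigenspaces $K_j$ trivially via the scalar $\xi_j$, use the square function estimate with $\alpha = 1/2$ and the operator $A=\prod_j(I_X-\overline{\xi_j}T)^{1/2}$ to make the embedding bounded, build the co-embedding on $X^*$ with Taylor-coefficient weights, and close the verification with Lemma~\ref{fixedx lemma}. However, there is a genuine gap in the construction on $Y$, and you come close to naming it yourself when you flag the calibration of the Taylor weights as ``the main obstacle'' without resolving it.

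The difficulty is structural, not just a matter of tuning coefficients. With $J_Y y = \sum_{k}\varepsilon_k\otimes T^{k-1}Ay$ and $V$ the shift $\varepsilon_k\mapsto\varepsilon_{k+1}$, applying $V^n$ places $T^{\ell-n-1}Ay$ at Rademacher position $\ell$. Pairing against any $\widetilde J_Y z$ of the form $\sum_\ell b_\ell\,\varepsilon_\ell\otimes (T^*)^{\ell-1}A^*z$ (the only form compatible with boundedness via the dual square function estimate) produces $\sum_\ell b_\ell\langle T^{2\ell-n-2}A^2 y,z\rangle$. The exponents $2\ell-n-2$ all share the same parity, so for fixed $n$ you only see even powers or only odd powers, and no choice of weights $b_\ell$ can reassemble the full series $\sum_{m\ge 0} a_m T^{m+n}A^2 y$ that Lemma~\ref{fixedx lemma} requires. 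Omitting the $T^*$-powers on the dual side to escape the parity constraint makes $\widetilde J_Y$ unbounded, since $(a_m)$ is only bounded and not $\ell^2$.

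The paper resolves this with an interleaving device that you are missing: on $L^p(\Omega_0,X)$ the embedding $J$ places $T^kAx_{N+1}$ at the even position $2k$ and $T^{k+1}Ax_{N+1}$ at the odd position $2k+1$, while $\widetilde J$ places $\overline{a_{2k}}\,(T^*)^kA^*y_{N+1}$ and $\overline{a_{2k+1}}\,(T^*)^kA^*y_{N+1}$ at positions $2k$ and $2k+1$ respectively, and crucially the isometry on the $\Omega_0$-component is $U^2$ (shift by two), so $V=D\oplus U^2$. After applying $V^n$ and pairing, the even positions contribute $a_{2k}\langle T^{n+2k}A^2 x_{N+1},y_{N+1}\rangle$ and the odd positions contribute $a_{2k+1}\langle T^{n+2k+1}A^2 x_{N+1},y_{N+1}\rangle$, which together recover $\sum_{m\ge 0}a_m\langle T^{n+m}A^2x_{N+1},y_{N+1}\rangle=\langle T^n x_{N+1},y_{N+1}\rangle$. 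You should replace your single-sum $J_Y$ and single shift with this even/odd splitting (or an equivalent device that distributes the power of $T$ asymmetrically between the two embeddings) to make the argument go through.
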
 
\begin{proof}    Let $L^p(\Omega,X)=L^p([N],X)\oplus L^p(\Omega_0,X)$.
    Consider the map $D:L^p([N],X)\rightarrow L^p([N],X)$ defined by $D(x_1,x_2,..,x_N)=(\xi_1x_1,\xi_2x_2,\dots,\xi_N x_N)$ where $\xi_j\in E$ for all $j=1,\dots,N$. Let us define an operator $u: L^p(\Omega_0) \to L^p(\Omega_0)$ as $u(f)(\{\omega_k\}_k) = f(\{\omega_{k-1}\}_k)$ for $f \in L^p(\Omega_0)$. Clearly, $u$ is a positive isometric isomorphism. Therefore, one can extend $u$ to an operator $U:= u \otimes I_X$ on the Banach space $L^p(\Omega_0, X)$ as an isometric isomorphism. Since $u(\epsilon_k) = \epsilon_{k-1}$, for any $\sum_k \epsilon_k \otimes x_k \in \text{Rad}_p(X)$, we have $U(\sum_k \epsilon_k \otimes x_k) = \sum_k \epsilon_k \otimes x_{k+1}$.

    Let $V=D\oplus U^2$ . From  equation \ref{Mean ergodic theorem 1}, we have that $x=x_1+x_2+\dots+x_N+x_{N+1}$ , $x_i\in \text{Ker}(I_{X}-\overline\xi_iT)$ where $i=1,2,...,N$ and $x_{N+1}\in\overline{Ran}\left(\prod\limits^N_{j=1}(I_{X}-{\xi_j}T)\right)$. Let $ A=\prod\limits^N_{j=1}(I_{X}-\xi_jT)^{\frac{1}{2}}$ . Consider the map $J:X\rightarrow L^p(\Omega,X)$ defined by 
    \begin{equation}
    J(x)=(x_1,\dots,x_N)\oplus\left(\sum^{\infty}_{k=0}\varepsilon_{2k}\otimes T^kAx_{N+1}+\sum^{\infty}_{k=0}\varepsilon_{2k+1}\otimes T^{k+1}Ax_{N+1}\right).\label{the map J}
    \end{equation}
    By the square function estimate of the $\RE$ operator, one can see that the map $J$ is well-defined.
    From  equation \ref{Mean ergodic theorem 2}, $y=y_1+y_2+\dots+y_N+y_{N+1}$ for all $y\in X^*$.
     Consider the map $\Tilde{J}:X^*\rightarrow L^p(\Omega,X)^*$ defined by 
    \begin{align*}
         \tilde{J}(y)=(y_1,y_2,\dots,y_N)\oplus& & & & & & & & 
         \end{align*}$$\hspace{2.5cm}\left(\sum^{\infty}_{k=0}\varepsilon_k\otimes \overline{a_{2k}} {T^*}^kA^*y_{N+1}+\sum^{\infty}_{k=0}\varepsilon_{2k-1}\otimes \overline{a_{2k-1}}  {T^*}^{(k-1)}A^*y_{N+1}\right).$$ 
         
By Theorem \ref{kahane contraction principle} and square function estimate for $T^*$, the map $\tilde{J}$ is well-defined.
Therefore,

    \begin{align*}
        V^nJ(x)=(\xi^n_1x_1,\dots,\xi^n_Nx_N)\oplus & & & & & & & & & 
    \end{align*} $$ \hspace{3cm}\left(\sum_{k=0}^{\infty}\varepsilon_{2k}\otimes T^{n+k}Ax_{N+1} + \sum^{\infty}_{k=0}\varepsilon_{2k+1}\otimes T^{n+k+1}Ax_{N+1}\right).
$$   
 Here $\varepsilon_{2k}\otimes T^{n+k}Ax_{N+1}$  appears in $2k$-th position and $\varepsilon_{2k+1}\otimes T^{n+k+1}Ax_{N+1}$ appears in $(2k+1)$-th position.
Let $W_1=\sum^{\infty}_{k=0}a_{2k}\langle T^{n+k}Ax_{N+1},{T^*}^kA^*y_{N+1}\rangle$ and $W_2=\sum^{\infty}_{k=0}a_{2k+1}\langle T^{n+k+1}Ax_{N+1},{T^*}^kA^*y_{N+1}\rangle$.
From the Remark \ref{orthogonality}, we have 
\begin{align*}
    \langle V^nJ(x),\tilde{J}(y)\rangle&=\sum^N_{j=1}\xi^n_j\langle x_j,y_j\rangle+W_1+W_2
    \\&=\sum^N_{j=1}\xi^n_j\langle x_j,y_j\rangle+\sum^{\infty}_{m=0}\langle a_mT^{n+m}A^2x_{N+1},y_{N+1}\rangle.
\end{align*}
From the Lemma \ref{fixedx lemma}, we have 
\begin{equation}
    \langle V^nJ(x),\tilde{J}(y)\rangle=\sum^N_{j=1}\xi^n_j\langle x_j,y_j\rangle+\langle T^nx_{N+1},y_{N+1}\rangle.
\end{equation}
As $T^nx_j=\xi^n_jx_j$ where $j=1,\dots,N.$ So, the above equation becomes
\begin{align}
    \langle V^nJ(x),\tilde{J}(y)\rangle=\sum^N_{j=1}\langle T^nx_j,y_j\rangle+\langle T^nx_{N+1},y_{N+1}\rangle.\label{Vn}
\end{align}
The decomposition of $T^n(x)$ with respect to equation \ref{Mean ergodic theorem 1} is 
$T^n(x)=T^n(x_1)+\dots+T^n(x_{N+1})$. 
So, the  equation \ref{Vn} becomes
$\langle QV^nJ(x),y \rangle=\langle T^n(x),y\rangle$ where $Q={\tilde{J}}^*$ and $y\in X^*$.
Therefore, $QV^nJ=T^n$ for all $n\geq0$.
\end{proof}
\begin{lemma}
    Let $X$ be a reflexive Banach space such that both $X$ and $X^*$ have finite cotype. Let $T$ be  $\RE$ which has bounded $H^{\infty}(E_s)$- functional calculus. Let $S\in B(X)$ such that $ST=TS$. Then $(\oplus^N S\oplus(I_{L^p(\Omega_{0})}\otimes S)J=JS$ where the map $J$  define as \ref{the map J}.\label{composion of J and S}
    \end{lemma}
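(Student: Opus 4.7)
The plan is to verify the identity pointwise on $X$ by tracking how $S$ interacts with the decomposition of $x$ given in Lemma \ref{Mean ergodic theorem 3} and with the operators appearing in the formula for $J$.

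First I would show that $S$ respects the decomposition $X=\bigoplus_{j=1}^N\text{Ker}(I_X-\overline{\xi_j}T)\oplus\overline{Ran}\bigl(\prod_{j=1}^N(I_X-\overline{\xi_j}T)\bigr)$. Indeed, if $x_j\in\text{Ker}(I_X-\overline{\xi_j}T)$, then $TSx_j=STx_j=\xi_j Sx_j$, so $Sx_j\in\text{Ker}(I_X-\overline{\xi_j}T)$. Similarly, since $S$ commutes with each factor $(I_X-\overline{\xi_j}T)$, it leaves the range of their product invariant, and by continuity it preserves the closed range. Consequently, if $x=x_1+\cdots+x_N+x_{N+1}$ is the decomposition of $x$, then the decomposition of $Sx$ is $Sx_1+\cdots+Sx_N+Sx_{N+1}$.

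Next I would verify that $S$ commutes with the operator $A=\prod_{j=1}^N(I_X-\overline{\xi_j}T)^{1/2}$. Since $S$ commutes with each $(I_X-\overline{\xi_j}T)$ and these operators are sectorial (as noted in the excerpt, $\RE$ implies $I_X-\overline{\xi_j}T$ is sectorial), $S$ commutes with the functional calculus of each $(I_X-\overline{\xi_j}T)$, in particular with the fractional powers, and therefore with their product $A$. Consequently $ST^kA=T^kAS$ for every $k\geq 0$.

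Combining these two observations, apply $J$ to $Sx$ using the formula \eqref{the map J}:
\begin{align*}
J(Sx)&=(Sx_1,\dots,Sx_N)\oplus\Bigl(\sum_{k=0}^\infty\varepsilon_{2k}\otimes T^kA\,Sx_{N+1}+\sum_{k=0}^\infty\varepsilon_{2k+1}\otimes T^{k+1}A\,Sx_{N+1}\Bigr)\\
&=(Sx_1,\dots,Sx_N)\oplus\Bigl(\sum_{k=0}^\infty\varepsilon_{2k}\otimes S(T^kAx_{N+1})+\sum_{k=0}^\infty\varepsilon_{2k+1}\otimes S(T^{k+1}Ax_{N+1})\Bigr),
\end{align*}
which is precisely $(\oplus^N S\oplus(I_{L^p(\Omega_0)}\otimes S))J(x)$, as $(I_{L^p(\Omega_0)}\otimes S)$ acts on the Rademacher sums by applying $S$ to each coefficient.

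The only mildly subtle step is the second one, justifying that $S$ commutes with the fractional power $A$; the rest is bookkeeping against the definition of $J$. This step uses the standard fact that any bounded operator commuting with a sectorial operator commutes with its Dunford-type functional calculus, applied to the function $z\mapsto z^{1/2}$ on each factor $(I_X-\overline{\xi_j}T)$.
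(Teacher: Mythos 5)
Your proof is correct and follows essentially the same route as the paper's: show $S$ preserves the mean-ergodic decomposition of $x$, so that $Sx_i$ are the components of $Sx$, and then read off the identity from the formula for $J$. You are in fact slightly more careful than the paper, which silently interchanges $S$ with $T^kA$ inside the Rademacher sums; your explicit remark that $S$ commutes with the fractional powers $(I_X-\overline{\xi_j}T)^{1/2}$ (hence with $A$) via the sectorial functional calculus fills in that implicit step.
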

\begin{proof}
For any  $x\in X$, observe that
  \begin{align}
      (\oplus^NS\oplus(I_{L^p(\Omega_{0})}\otimes S))J(x)& & & & & & & &\label{the com of S and J}\end{align}$$=(Sx_1,\dots,Sx_N)\oplus\left(\sum^{\infty}_{k=0}\varepsilon_{2k}\otimes T^kASx_{N+1}+\sum^{\infty}_{k=0}\varepsilon_{2k+1}\otimes T^{k+1}ASx_{N+1}\right)$$
Since, $x_i\in \text{Ker}(I_X-\overline{\xi_i}T)=0$ and $ST=TS$ , it follows that $(I_X-\overline{\xi}T)Sx_i=0$; that is $Sx_i\in \text{Ker}(I_X-\overline\xi_iT)$ for all $i=1,\dots,N$.
For any $x\in X$, we have 
     $x=x_1+\dots+x_{N+1}$ . Then $Sx$ admits the unique decomposition
     $Sx=Sx_1+\dots+Sx_{N+1}.$ Therefore, $Sx_i\in \text{Ker}(I-\overline{\xi_i}T)$ and
      $Sx_{N+1}\in\overline{Ran}\left(\prod\limits^N_{j=1}(I_X-\overline{\xi_j}T)\right)$. By the definition of the map $J$, it is immediate that \begin{equation}   
      JS(x)=(Sx_1,\dots,Sx_N)\oplus\left(\sum\limits^{\infty}_{k=0}\varepsilon_{2k}\otimes T^kASx_{N+1}+\sum\limits^{\infty}_{k=0}\varepsilon_{2k+1}\otimes T^{k+1}ASx_{N+1}\right).\label{JS}
      \end{equation}
      Combining the equations \ref{the com of S and J} and \ref{JS}, the lemma is proven.
\end{proof}

\begin{proof}[Proof of Theorem \ref{dilationthm}]
Assume that $L^p(\Omega',X)=L^p([N],L^p(\Omega,X))\bigoplus L^p(\Omega_0,L^p(\Omega,X))$.
 Let $D_{L^p(\Omega,X)}:L^p([N],L^p(\Omega,X))\rightarrow L^p([N],L^p(\Omega,X)$ such that \begin{equation}
     D_{L^p(\Omega,X)}(x_1,\dots,x_N)=(\xi_1x_1,\dots,\xi_Nx_N),
 \end{equation} where $x_i\in L^p(\Omega,X)$ for all $i\in\{1,2,\dots,N\}$.
  From  Theorem \ref{dilation} , we have that  $T^{i_1}_1=Q_1V^{i_1}_1J_1$ and $T^{i_2}_2=Q_2V^{i_2}_2J_2$. Therefore,
     $$ T^{i_1}_1T^{i_2}_2=Q_1V^{i_1}_1J_1T^{i_2}_2 $$
  Applying  Lemma \ref{composion of J and S}, we have 
  \begin{align*}
  T^{i_1}_1T^{i_2}_2&=Q_1V^{i_1}_1(\oplus^N T^{i_2}_2\oplus( I_{L^p(\Omega_0)}\otimes T^{i_2}_2))J_1\\&=Q_1V^{i_1}_1\left(\oplus^N Q_2\oplus(I_{L^p(\Omega_0)}\otimes Q_2)\right)\left(\oplus^N V^{i_2}_2\oplus(I_{L^p(\Omega_0)}\otimes V^{i_2}_2\right)\\ &   \qquad \qquad\qquad\qquad\qquad(\oplus^N J_2\oplus(I_{L^p(\Omega_0)}\otimes J_2))J_1\\
  &=Q_1(\oplus^N Q_2\oplus(I_{L^p(\Omega_0)}\otimes Q_2))(D_{L^p(\Omega,X)}\oplus(u\otimes I_{L^p(\Omega,X)})^2)^{i_1}\\&\qquad\qquad\left(\oplus^N V^{i_2}_2\oplus(I_{L^p(\Omega_0)}\otimes V^{i_2}_2)\right)(\oplus^N J_2\oplus(I_{L^p(\Omega_0)}\otimes J_2))J_1\\&=Q(D_{L^p(\Omega,X)}\oplus(u\otimes I_{L^p(\Omega,X)})^{2})^{i_1}\left(\oplus^N V^{i_2}_2\oplus(I_{L^p(\Omega_0)}\otimes V^{i_2}_2)\right)J
  \\&=Q(D_{L^p(\Omega,X)}\oplus(u\otimes I_{L^p(\Omega,X)})^{2})^{i_1}\left(\oplus^N V_2\oplus(I_{L^p(\Omega_0)}\otimes V_2)\right)^{i_2}J
  \end{align*} 
  Where $Q=Q_1(\oplus^N Q_2\oplus(I_{L^p(\Omega_0)}\otimes Q_2))$ and $J=(\oplus^N J_2\oplus(I_{L^p(\Omega_0)}\otimes J_2))J_1$.
  It is easy to check that the operators $(D_{L^p(\Omega,X)}\oplus(u\otimes I_{L^p(\Omega,X)})^{2})$ and $\left(\oplus^N V_2\oplus(I_{L^p(\Omega_0)}\otimes V_2)\right)$ commute.
\end{proof}
\begin{proof}[Proof of Theorem \ref{CLASS}]
 Note that $(2)$ follows from $(1)$ by applying \cite{bouabdillah2024squarefunctionsassociatedritte}[Proposition 6.6] and Theorem \ref{dilationthm}. Now suppose $(2)$ holds,$(3)$ follows from the matrix valued version of Coifmann-Weiss general transference principle[\cite{MR481928},Theorem 2.4]. From $(3)$ to $(4)$ follow trivially. Let us assume $(4)$. Then each $\overline{\xi_j}T_i, j=1,2\dots,N$ and $i=1,2$ is $p$-polynomially bounded, from [\cite{MR3265289}, Proposition 4.7] and we can obtain $(5)$. Using Theorem \ref{transferprin} and the joint functional calculus property of $L^p$-spaces we get $(1)$ from $(5)$. This completes the proof of the theorem. 
\end{proof}
We now consider $\text{Ritt}_E$ operators on Hilbert space to prove an analogue of Theorem \ref{CLASS}. We refer to \cite{MR3928691} for the proof of the theorem for the Ritt case. The proof of the $\text{Ritt}_E$ follows in a similar way using the transfer principle, Theorem \ref{transferprin}, dilation Theorem \ref{dilationthm} and joint functional calculus property of Hilbert spaces \cite{MR1635157}.
\begin{theorem}\label{JRS}
Let $E=\{\xi_1,\dots,\xi_N\}$ be a finite subset of $\mathbb{T}$. Let $(T_1,T_2)$ be a commuting tuple of $\RE$ operators on a Hilbert space $\mathcal{H}.$ Then, the following assertions are equivalent.
\begin{itemize}
\item[1.] Each $T_i$ is similar to a contraction, $1\leq i\leq 2.$
\item[2.] The tuple $(T_1,T_2)$  admits a joint bounded $H^\infty$-functional calculus.
\item[3.]The tuple $(T_1,T_2)$ is jointly similar to a commuting tuple of contractions.
\end{itemize}
\end{theorem}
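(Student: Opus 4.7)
The plan is to establish the cycle $(3) \Rightarrow (1) \Rightarrow (2) \Rightarrow (3)$, mirroring the strategy for Theorem \ref{CLASS} but using the joint functional calculus property of Hilbert spaces \cite{MR1635157} in place of its $L^p$ analogue, and Paulsen's similarity theorem in place of the $p$-completely polynomially bounded reformulation. The implication $(3) \Rightarrow (1)$ is immediate, since a single similarity conjugating the pair to commuting contractions conjugates each $T_i$ individually to a contraction.

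For $(1) \Rightarrow (2)$, I would invoke the single-variable analogue of the theorem for $\RE$ operators, established in \cite{MR4861029}, which says that a Hilbert space $\RE$ operator is similar to a contraction if and only if it admits a bounded $H^\infty(E_s)$-functional calculus. Consequently each $T_i$ admits a bounded $H^\infty(E_{s_i})$-calculus, which by \cite{MR4819960} is equivalent to each $A^i_j := I - \overline{\xi}_j T_i$ admitting a bounded $H^\infty(\Sigma_{\theta_{i,j}})$-calculus for some $\theta_{i,j} \in (0, \pi/2)$. Since Hilbert spaces enjoy the joint functional calculus property (\cite{MR1635157}), the commuting pair of sectorial operators $(A^1_i, A^2_j)$ admits a joint bounded $H^\infty(\Sigma_{\theta_{1,i}} \times \Sigma_{\theta_{2,j}})$-calculus. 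The transfer principle (Theorem \ref{transferprin}) then yields the joint bounded $H^\infty(E_{s_1} \times E_{s_2})$-calculus for $(T_1, T_2)$.

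For $(2) \Rightarrow (3)$, I would apply Theorem \ref{dilationthm} with $p = 2$ and $X = \mathcal{H}$ (a Hilbert space is reflexive and of cotype $2$, with $\mathcal{H}^*$ also of cotype $2$), yielding commuting isometries $(U_1, U_2)$ on the Hilbert space $L^2(\Omega', \mathcal{H})$ together with bounded $J, Q$ such that $T_1^{i_1} T_2^{i_2} = Q U_1^{i_1} U_2^{i_2} J$ for all $i_1, i_2 \geq 0$. By It\^o's extension of the Sz.-Nagy dilation theorem, the commuting pair $(U_1, U_2)$ admits a commuting unitary dilation $(W_1, W_2)$ on a larger Hilbert space $K$, so that $T_1^{i_1} T_2^{i_2} = \widetilde{Q}\, W_1^{i_1} W_2^{i_2}\, \widetilde{J}$ for suitable bounded $\widetilde{J}, \widetilde{Q}$. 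Applying the same factorization to matrix-valued polynomials yields joint complete polynomial boundedness of $(T_1, T_2)$. Paulsen's similarity theorem, applied to the commuting pair (available in the pair case precisely because of Ando/It\^o dilation), then produces a single bounded invertible $S$ on $\mathcal{H}$ such that $(S^{-1} T_1 S, S^{-1} T_2 S)$ is a commuting pair of contractions.

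The main technical obstacle lies in the final step of $(2) \Rightarrow (3)$: producing a \emph{single} similarity for the pair rather than two individual similarities. This is precisely the point where the two-operator case succeeds but the three-operator case fails, thanks to the availability of the commuting unitary dilation for pairs but not for triples (Varopoulos--Kaijser). The joint complete polynomial boundedness extracted from the dilation, rather than merely polynomial boundedness (which by Pisier is insufficient for similarity), is what makes Paulsen's argument applicable and converts the dilation-theoretic fact into the desired simultaneous similarity.
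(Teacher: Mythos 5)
Your proof is correct and follows essentially the strategy the paper sketches: it defers to \cite{MR3928691} for the Ritt case and names the transfer principle (Theorem \ref{transferprin}), the dilation theorem (Theorem \ref{dilationthm}), and the joint functional calculus property of Hilbert spaces \cite{MR1635157} as the three key ingredients, all of which you use. The cycle $(3)\Rightarrow(1)\Rightarrow(2)\Rightarrow(3)$ is the right shape, and your expansion of $(2)\Rightarrow(3)$ — applying Theorem \ref{dilationthm} with $p=2$ to get a loose dilation by commuting isometries on a Hilbert space, passing to commuting unitaries via It\^o's extension theorem, reading off joint complete polynomial boundedness from the resulting factorization of matrix-valued polynomials, and then invoking Paulsen's similarity theorem to produce a single simultaneous similarity — is precisely the intended mechanism, and your closing remark about why the argument is confined to pairs (availability of the commuting unitary dilation) is apt.

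The one point worth flagging is $(1)\Rightarrow(2)$. You cite the single-variable similarity-iff-calculus theorem from \cite{MR4861029}, which that paper proves by a Franks--McIntosh-type decomposition. The present paper explicitly presents Theorem \ref{JRS} as supplying an \emph{alternate} proof of some of the results of \cite{MR4861029}, with the transfer principle standing in for Franks--McIntosh; routing $(1)\Rightarrow(2)$ back through \cite{MR4861029} makes the overall argument logically sound but undercuts that claim. A self-contained route in the paper's spirit is available: if $T_i=S_i^{-1}C_iS_i$ with $C_i$ a contraction, then $A^i_j=I-\overline{\xi}_jT_i$ is similar to $I-\overline{\xi}_jC_i$, which is $m$-accretive, hence admits a bounded $H^\infty(\Sigma_\theta)$-calculus for some $\theta>\frac{\pi}{2}$ on Hilbert space; since the $\RE$ hypothesis forces $A^i_j$ to be sectorial of angle strictly below $\frac{\pi}{2}$, the standard angle-improvement theorem for the $H^\infty$-calculus brings the angle below $\frac{\pi}{2}$, after which the joint functional calculus property of Hilbert spaces and Theorem \ref{transferprin} finish the implication with no appeal to \cite{MR4861029}. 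Both routes are valid; only the latter realizes the ``alternate proof'' claim literally.
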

\textbf{Data Availability}

This study does not use or generate any datasets.
\section*{Acknowledgements}
The third named author thanks the DST-INSPIRE Faculty Fellowship \\
DST/INSPIRE/04/2020/001132 and Prime Minister Early Career Research Grant Scheme ANRF/ECRG/2024/000699/PMS and ANRF/ARGM/2025/000895/MTR. % We sincerely thank the anonymous referee for several suggestions that
%considerably improved the presentation of this paper.
%We sincerely thank the Indian Institute of Science Education and Research Thiruvananthapuram for their support and resources that made this research possible.

\bibliographystyle{amsplain}
\nocite{*}
\bibliography{main}

@article {MR4819960,
    AUTHOR = {Bouabdillah, Oualid and Le Merdy, Christian},
     TITLE = {Polygonal functional calculus for operators with finite
              peripheral spectrum},
   JOURNAL = {Israel J. Math.},
  FJOURNAL = {Israel Journal of Mathematics},
    VOLUME = {263},
      YEAR = {2024},
    NUMBER = {2},
     PAGES = {517--551},
      
}

@article {MR4861029,
    AUTHOR = {Le Merdy, Christian and reshmi, M. N.},
     TITLE = {Commuting families of polygonal type operators on {H}ilbert
              space},
   JOURNAL = {Adv. Oper. Theory},
  FJOURNAL = {Advances in Operator Theory},
    VOLUME = {10},
      YEAR = {2025},
    NUMBER = {2},
     PAGES = {Paper No. 33},
      
}

@book {MR77411,
    AUTHOR = {Krengel, Ulrich},
     TITLE = {Ergodic theorems},
    SERIES = {De Gruyter Studies in Mathematics},
    VOLUME = {6},
      NOTE = {With a supplement by Antoine Brunel},
 PUBLISHER = {Walter de Gruyter \& Co., Berlin},
      YEAR = {1985},
     PAGES = {viii+357},
      
}

@article {MR4043874,
    AUTHOR = {Arrigoni, Olivier and Le Merdy, Christian},
     TITLE = {{$H^\infty$}-functional calculus for commuting families of
              {R}itt operators and sectorial operators},
   JOURNAL = {Oper. Matrices},
  FJOURNAL = {Operators and Matrices},
    VOLUME = {13},
      YEAR = {2019},
    NUMBER = {4},
     PAGES = {1055--1090},
     
}

@incollection {MR1768324,
    AUTHOR = {Le Merdy, Christian},
     TITLE = {{$H^\infty$}-functional calculus and applications to maximal
              regularity},
 BOOKTITLE = {Semi-groupes d'op\'erateurs et calcul fonctionnel ({B}esan\c
              con, 1998)},
    SERIES = {Publ. Math. UFR Sci. Tech. Besan\c con},
    VOLUME = {16},
     PAGES = {41--77},
 PUBLISHER = {Univ. Franche-Comt\'e, Besan\c con},
      YEAR = {1999},
   
}

@article {bouabdillah2024squarefunctionsassociatedritte,
    AUTHOR = {Bouabdillah, Oualid},
     TITLE = {Square functions associated with {${\rm Ritt}_E$} operators},
   JOURNAL = {Indag. Math. (N.S.)},
  FJOURNAL = {Koninklijke Nederlandse Akademie van Wetenschappen.
              Indagationes Mathematicae. New Series},
    VOLUME = {36},
      YEAR = {2025},
    NUMBER = {5},
     PAGES = {1417--1452},
      ISSN = {0019-3577,1872-6100},
   MRCLASS = {47A60 (47B01 47B12)},
  MRNUMBER = {4949890},
       DOI = {10.1016/j.indag.2025.05.014},
       URL = {https://doi.org/10.1016/j.indag.2025.05.014},
}

@book{2,
    AUTHOR = {Jan van Neerven},
     TITLE = {Stochastic Evolution Equations},
    SERIES = {},
  PUBLISHER={ISEM Lecture Notes },
       YEAR={2007},
    URL={J.M.A.M.vanNeerven-at-tudelft.nl },
}

@article {MR3293430,
    AUTHOR = {Le Merdy, Christian},
     TITLE = {{$H^\infty$} functional calculus and square function estimates
              for {R}itt operators},
   JOURNAL = {Rev. Mat. Iberoam.},
  FJOURNAL = {Revista Matem\'atica Iberoamericana},
    VOLUME = {30},
      YEAR = {2014},
    NUMBER = {4},
     PAGES = {1149--1190},
      
}

@article {MR43386,
    AUTHOR = {von Neumann, Johann},
     TITLE = {Eine {S}pektraltheorie f\"ur allgemeine {O}peratoren eines
              unit\"aren {R}aumes},
   JOURNAL = {Math. Nachr.},
  FJOURNAL = {Mathematische Nachrichten},
    VOLUME = {4},
      YEAR = {1951},
     PAGES = {258--281},
      
}

@article {MR155193,
    AUTHOR = {And\^o, T.},
     TITLE = {On a pair of commutative contractions},
   JOURNAL = {Acta Sci. Math. (Szeged)},
  FJOURNAL = {Acta Universitatis Szegediensis. Acta Scientiarum
              Mathematicarum},
    VOLUME = {24},
      YEAR = {1963},
     PAGES = {88--90},
      
}

@article {MR355642,
    AUTHOR = {Varopoulos, N. Th.},
     TITLE = {On an inequality of von {N}eumann and an application of the
              metric theory of tensor products to operators theory},
   JOURNAL = {J. Functional Analysis},
  FJOURNAL = {Journal of Functional Analysis},
    VOLUME = {16},
      YEAR = {1974},
     PAGES = {83--100},
     
}

@article {MR4611833,
    AUTHOR = {Hong, Guixiang and Ray, Samya Kumar and Wang, Simeng},
     TITLE = {Maximal ergodic inequalities for some positive operators on
              noncommutative {$L_p$}-spaces},
   JOURNAL = {J. Lond. Math. Soc. (2)},
  FJOURNAL = {Journal of the London Mathematical Society. Second Series},
    VOLUME = {108},
      YEAR = {2023},
    NUMBER = {1},
     PAGES = {362--408},
      
}

@article {MR4092689,
    AUTHOR = {Ray, Samya Kumar},
     TITLE = {On multivariate {M}atsaev's conjecture},
   JOURNAL = {Complex Anal. Oper. Theory},
  FJOURNAL = {Complex Analysis and Operator Theory},
    VOLUME = {14},
      YEAR = {2020},
    NUMBER = {4},
     PAGES = {Paper No. 42, 25},
      
}

@article {MR3928691,
    AUTHOR = {Mohanty, Parasar and Ray, Samya Kumar},
     TITLE = {On joint functional calculus for {R}itt operators},
   JOURNAL = {Integral Equations Operator Theory},
  FJOURNAL = {Integral Equations and Operator Theory},
    VOLUME = {91},
      YEAR = {2019},
    NUMBER = {2},
     PAGES = {Paper No. 14, 18},
      
}

@article {MR3897976,
    AUTHOR = {Gupta, Rajeev and Ray, Samya K.},
     TITLE = {On a question of {N}. {T}h. {V}aropoulos and the constant
              {$C_2(n)$}},
   JOURNAL = {Ann. Inst. Fourier (Grenoble)},
  FJOURNAL = {Universit\'e{} de Grenoble. Annales de l'Institut Fourier},
    VOLUME = {68},
      YEAR = {2018},
    NUMBER = {6},
     PAGES = {2613--2634},
     
}

@incollection {MR912940,
    AUTHOR = {McIntosh, Alan},
     TITLE = {Operators which have an {$H_\infty$} functional calculus},
 BOOKTITLE = {Miniconference on operator theory and partial differential
              equations ({N}orth {R}yde, 1986)},
    SERIES = {Proc. Centre Math. Anal. Austral. Nat. Univ.},
    VOLUME = {14},
     PAGES = {210--231},
 PUBLISHER = {Austral. Nat. Univ., Canberra},
      YEAR = {1986},
     
}

@article {MR1364554,
    AUTHOR = {Cowling, Michael and Doust, Ian and McIntosh, Alan and Yagi,
              Atsushi},
     TITLE = {Banach space operators with a bounded {$H^\infty$} functional
              calculus},
   JOURNAL = {J. Austral. Math. Soc. Ser. A},
  FJOURNAL = {Australian Mathematical Society. Journal. Series A. Pure
              Mathematics and Statistics},
    VOLUME = {60},
      YEAR = {1996},
    NUMBER = {1},
     PAGES = {51--89},
     
}

@book {MR1818047,
    AUTHOR = {Pisier, Gilles},
     TITLE = {Similarity problems and completely bounded maps},
    SERIES = {Lecture Notes in Mathematics},
    VOLUME = {1618},
   EDITION = {expanded},
      NOTE = {Includes the solution to ``The Halmos problem''},
 PUBLISHER = {Springer-Verlag, Berlin},
      YEAR = {2001},
     PAGES = {viii+198},
      
}

@book {MR1976867,
    AUTHOR = {Paulsen, Vern},
     TITLE = {Completely bounded maps and operator algebras},
    SERIES = {Cambridge Studies in Advanced Mathematics},
    VOLUME = {78},
 PUBLISHER = {Cambridge University Press, Cambridge},
      YEAR = {2002},
     PAGES = {xii+300},
      
}

@article{hartz2025neumann,
  title={On von Neumann’s inequality on the polydisc},
  author={Hartz, Michael},
  journal={Mathematische Annalen},
  volume={391},
  number={4},
  pages={5235--5264},
  year={2025},
  publisher={Springer}
}

@book {MR275190,
    AUTHOR = {Sz.-Nagy, B\'ela and Foia\c s, Ciprian},
     TITLE = {Harmonic analysis of operators on {H}ilbert space},
      NOTE = {Translated from the French and revised},
 PUBLISHER = {North-Holland Publishing Co., Amsterdam-London; American
              Elsevier Publishing Co., Inc., New York; Akad\'emiai Kiad\'o,
              Budapest},
      YEAR = {1970},
     PAGES = {xiii+389},
  
}

@book {MR2244037,
    AUTHOR = {Haase, Markus},
     TITLE = {The functional calculus for sectorial operators},
    SERIES = {Operator Theory: Advances and Applications},
    VOLUME = {169},
 PUBLISHER = {Birkh\"auser Verlag, Basel},
      YEAR = {2006},
     PAGES = {xiv+392},
      
}

@phdthesis{albrecht1994functional,
  title={Functional calculi of commuting unbounded operators},
  author={Albrecht, David William},
  year={1994},
  school={Monash University}
}

@article {MR1635157,
    AUTHOR = {Lancien, Florence and Lancien, Gilles and Le Merdy, Christian},
     TITLE = {A joint functional calculus for sectorial operators with
              commuting resolvents},
   JOURNAL = {Proc. London Math. Soc. (3)},
  FJOURNAL = {Proceedings of the London Mathematical Society. Third Series},
    VOLUME = {77},
      YEAR = {1998},
    NUMBER = {2},
     PAGES = {387--414},
     
}

@article {MR2980915,
    AUTHOR = {Le Merdy, Christian and Xu, Quanhua},
     TITLE = {Maximal theorems and square functions for analytic operators
              on {$L^p$}-spaces},
   JOURNAL = {J. Lond. Math. Soc. (2)},
  FJOURNAL = {Journal of the London Mathematical Society. Second Series},
    VOLUME = {86},
      YEAR = {2012},
    NUMBER = {2},
     PAGES = {343--365},
     
}

@article {MR3060752,
    AUTHOR = {Le Merdy, Christian and Xu, Quanhua},
     TITLE = {Strong {$q$}-variation inequalities for analytic semigroups},
   JOURNAL = {Ann. Inst. Fourier (Grenoble)},
  FJOURNAL = {Universit\'e{} de Grenoble. Annales de l'Institut Fourier},
    VOLUME = {62},
      YEAR = {2012},
    NUMBER = {6},
     PAGES = {2069--2097},
     
}

@article {MR1837537,
    AUTHOR = {Blunck, S\"onke},
     TITLE = {Analyticity and discrete maximal regularity on {$L_p$}-spaces},
   JOURNAL = {J. Funct. Anal.},
  FJOURNAL = {Journal of Functional Analysis},
    VOLUME = {183},
      YEAR = {2001},
    NUMBER = {1},
     PAGES = {211--230},
     
}

@article {MR3265289,
    AUTHOR = {Arhancet, C\'edric and Le Merdy, Christian},
     TITLE = {Dilation of {R}itt operators on {$L^p$}-spaces},
   JOURNAL = {Israel J. Math.},
  FJOURNAL = {Israel Journal of Mathematics},
    VOLUME = {201},
      YEAR = {2014},
    NUMBER = {1},
     PAGES = {373--414},
     
}

@article {MR3683097,
    AUTHOR = {Arhancet, C\'edric and Fackler, Stephan and Le Merdy,
              Christian},
     TITLE = {Isometric dilations and {$H^\infty$} calculus for bounded
              analytic semigroups and {R}itt operators},
   JOURNAL = {Trans. Amer. Math. Soc.},
  FJOURNAL = {Transactions of the American Mathematical Society},
    VOLUME = {369},
      YEAR = {2017},
    NUMBER = {10},
     PAGES = {6899--6933},
     
}

@article {MR458230,
    AUTHOR = {Akcoglu, M. A. and Sucheston, L.},
     TITLE = {Dilations of positive contractions on {$L\sb{p}$} spaces},
   JOURNAL = {Canad. Math. Bull.},
  FJOURNAL = {Canadian Mathematical Bulletin. Bulletin Canadien de
              Math\'ematiques},
    VOLUME = {20},
      YEAR = {1977},
    NUMBER = {3},
     PAGES = {285--292},
     
}

@book {MR481928,
    AUTHOR = {Coifman, Ronald R. and Weiss, Guido},
     TITLE = {Transference methods in analysis},
    SERIES = {Conference Board of the Mathematical Sciences Regional
              Conference Series in Mathematics},
    VOLUME = {No. 31},
 PUBLISHER = {American Mathematical Society, Providence, RI},
      YEAR = {1976},
     PAGES = {ii+59},
      
}

@article {MR615568,
    AUTHOR = {Peller, V. V.},
     TITLE = {Analogue of {J}. von {N}eumann's inequality, isometric
              dilation of contractions and approximation by isometries in
              spaces of measurable functions},
      NOTE = {Spectral theory of functions and operators, II},
   JOURNAL = {Trudy Mat. Inst. Steklov.},
  FJOURNAL = {Akademiya Nauk SSSR. Trudy Matematicheskogo Instituta imeni V.
              A. Steklova},
    VOLUME = {155},
      YEAR = {1981},
     PAGES = {103--150, 185},
      
}

\end{document}